\def\R {\mathbb{R}}
\def\C {\mathcal{C}}
\def\N {\mathbb{N}}
\def\S {\mathbb{S}}
\def\from {\colon}
\def\bv {\mathbf{v}}
\def\bu {\mathbf{u}}
\def\bw {\mathbf{w}}
\def\bW {\mathbf{W}}
\def\eps{\varepsilon}
\def\dist{{\rm dist}}
\newcommand{\loc}{\mathrm{loc}}
\renewcommand{\div}{\mathrm{div}}
\newcommand{\problem}[1] {(P)_{#1}}
\newcommand{\nuACF} {\nu^{\mathrm{ACF} }}
\newcommand{\de}[1] {\mathrm{d} #1}
\newcommand{\ddfrac}[2] {\frac{\displaystyle #1 }{\displaystyle #2} }
\DeclareMathOperator*{\pv}{pv}
\DeclareMathOperator*{\tsum}{\textstyle{\sum}}
\DeclareMathOperator{\supp}{supp}
\newtheorem{proposition}{Proposition}[section]
\newtheorem{theorem}[proposition]{Theorem}
\newtheorem{corollary}[proposition]{Corollary}
\newtheorem{lemma}[proposition]{Lemma}
\theoremstyle{definition}
\newtheorem{definition}[proposition]{Definition}
\newtheorem{remark}[proposition]{Remark}
\numberwithin{equation}{section}
\title[Segregation problems involving s-laplacians]{Uniform H\"older regularity with small exponent
in competition-fractional diffusion systems.}
\author{Susanna Terracini}
\email{susanna.terracini@unito.it}
\address{Dipartimento di Matematica "Giuseppe Peano", Universit\`a degli Studi
di Torino, Via Carlo Alberto 10, 10123 Torino, Italy}
\author{Gianmaria Verzini }
\email{gianmaria.verzini@polimi.it}
\address{Dipartimento di Matematica, Politecnico di Milano, p.za Leonardo da
Vinci 32,  20133 Milano, Italy}
\author{Alessandro Zilio}
\email{alessandro.zilio@mail.polimi.it}
\address{Dipartimento di Matematica, Politecnico di Milano, p.za Leonardo da
Vinci 32,  20133 Milano, Italy}
\thanks{Work partially supported by the PRIN2009 grant ``Critical Point Theory and
Perturbative Methods for Nonlinear Differential Equations''.}
\subjclass[2010]{Primary: 35J65; secondary: 35B40 35B44 35R11 81Q05 82B10.}
\keywords{Fractional laplacian, spatial segregation, strongly competing systems, optimal regularity of limiting profiles, singular perturbations}
\begin{document}

\maketitle

\begin{abstract}
For a class of competition-diffusion nonlinear systems involving the $s$-power of the Laplacian,
$s\in(0,1)$, of the form
\[
(-\Delta)^{s} u_i=f_i(u_i) - \beta u_i\sum_{j\neq i}a_{ij}u_j^2,\qquad i=1,\dots,k,
\]
we prove that $L^\infty$ boundedness implies $\C^{0,\alpha}$ boundedness for $\alpha>0$
sufficiently small, uniformly as $\beta\to +\infty$. This extends to the case $s\neq1/2$
part of the results obtained by the authors in the previous paper [arXiv:1211.6087v1].
\end{abstract}

\section{Introduction}

In this paper we study the problem
\begin{equation}\label{eqn: general_system}
\begin{cases}
(-\Delta)^s u_i=f_{i,\beta}(u_i)-\beta u_i\tsum_{j\neq i}a_{ij}u_j^2 \\
u_i\in H^{s}(\R^N),
\end{cases}
\end{equation}
in dimension $N\geq2$, where $a_{ij}=a_{ji}>0$, $\beta$ is positive and large, and the non-local operator
\[
(-\Delta)^s u (x) = c_{N,s} \pv\int_{\R^N} \frac{u(x)-u(\xi)}{|x-\xi|^{N+2s}}\,\de\xi
\]
denotes the $s$-power of the laplacian. We are mostly concerned with the asymptotic behavior of the solutions to the previous system as the parameter $\beta \to +\infty$: as we shall see, this entails spatial segregation for the limiting profiles. Our aim is to prove uniform in $\beta$ bounds in H\"older spaces, extending to the case $s\in(0,1)$ part of the results that we already obtained for the case $s=1/2$ in the recent paper \cite{tvz1}, to which we refer for further details.

Segregation-diffusion problems arise in different applicative contests, from biological models
for competing species to the phase-segregation phenomenon in Bose-Einstein condensation. Regarding
the standard diffusion case ($s=1$), a broad literature is present. Among the others, we mention the papers
\cite{ctvOptimal,ctv,MR2278412,MR2393430,ww,MR2529504,wz,dwz1,tt,dwz2,dwz3}, which are mostly concerned with
regularity issues. Our study is motivated by the recent interest that has developed around equations involving fractional
laplacians, as they model long-jump diffusion processes in population dynamics, and they naturally appear in relativistic corrections of quantum field theory.

Exploiting the local realization of the fractional laplacian $(-\Delta)^s$ as a Dirichlet-to-Neumann map (see, for instance, \cite{cs}),
semilinear problems involving fractional laplacians have been the object of a massive study. Accordingly, letting $a:=1-2s \in (-1,1)$, if we introduce the differential operator (on the $(N+1)$-dimensional space)
\[
    L_a v := -\div\left(|y|^a \nabla v\right),
\]
and define
\[
    \partial^a_{\nu} v := \lim_{y \to 0^+} - y^a \partial_{y} v,
\]
we obtain that, up to normalization constants, the problem
\[
    \begin{cases}
    L_a  v_i = 0 & \text{in } B^+_1\\
    \partial^a_{\nu} v_i = f_{i,\beta}(v_i) - \beta v_i \tsum_{j \neq i} a_{ij}v_j^2 & \text{on } \partial^0 B^+_1
    \end{cases} \leqno \problem{\beta}^s
\]
is a localized version of \eqref{eqn: general_system}, with $u_i(x)=v_i(x,0)$. Here, as usual, we write
$\R^{N+1}_+\ni X = (x,y)$ and $B^+_r(x_0,0):= B_r(x_0,0)\cap\{y>0\}$, which boundary contains the spherical part $\partial^+B^+_r :=\partial B_r \cap\{y>0\}$ and the flat one $\partial^0B^+_r :=B_r \cap\{y=0\}$. Well known properties of the Muckenhoupt $A_2$-weights (see for instance \cite{kufner}) allow to provide a weak formulation of $\problem{\beta}^s$ in the weighted space
\[
    H^{1;a}(\Omega) := \left\{v : \int_{\Omega} y^a \left(|v|^2 + |\nabla v|^2 \right)\, \de{x}\de{y} < \infty \right\},
\]
endowed with its natural Hilbert structure.

The main result we prove in this paper is the following.
\begin{theorem}[Local uniform H\"older bounds]\label{thm:_local_holder}
Let the functions $f_{i,\beta}$ be continuous and uniformly bounded (w.r.t. $\beta$) on bounded sets.
There exists $\alpha = \alpha(N,s)>0$ such that, for every $\{\bv_{\beta}\}_{\beta }$ family of $H^{1;a}(B^+_1)$ solutions to the problems $\problem{\beta}^s$,
\[
    \| \bv_{\beta} \|_{L^{\infty}(B^+_1)} \leq M
    \quad\implies\quad
    \| \bv_\beta\|_{\C^{0,\alpha}\left(\overline{B^+_{1/2}}\right)} \leq C,
\]
where $C=C(M,\alpha)$. Furthermore, $\{\bv_\beta\}_{\beta > 0}$ is relatively compact in $H^{1;a}(B^+_{1/2}) \cap \C^{0,\alpha}\left(\overline{B^+_{1/2}}\right)$.
\end{theorem}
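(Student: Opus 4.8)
The plan is to argue by contradiction through a blow-up analysis whose limit profiles are ruled out by Liouville-type theorems; the smallness of $\alpha$ is exactly what makes those theorems applicable. First, since $L_a v_{i}=0$ in $B_1^+$ carries no right-hand side, the interior De Giorgi--Nash--Moser theory for the $A_2$-weight $|y|^a$ (Fabes--Kenig--Serapioni) yields, uniformly in $\beta$, an interior $\C^{0,\gamma_0}$ estimate for each $v_{i}$ away from $\partial^0 B_1^+$, with $\gamma_0=\gamma_0(N,a)>0$. Fixing $\alpha<\gamma_0$, a routine interior/boundary patching argument shows that the desired bound $\|\bv_\beta\|_{\C^{0,\alpha}(\overline{B_{1/2}^+})}\le C$ is equivalent to a uniform-in-$\beta$ bound on the maximal oscillation quotient
\[
L_\beta:=\max_i\ \sup\Bigl\{\frac{\osc_{B_r^+(x_0)\cap B_1^+} v_{i,\beta}}{r^{\alpha}}\ :\ x_0\in\overline{B_{1/2}^+},\ 0<r<\tfrac14\Bigr\}.
\]
Suppose, for contradiction, that along some $\beta_n\to+\infty$ there are solutions with $\|\bv_{\beta_n}\|_{L^\infty}\le M$ yet $L_n:=L_{\beta_n}\to+\infty$. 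Writing $v_{i,n}:=v_{i,\beta_n}$, pick near-maximisers: an index $i_0$ (constant along a subsequence), points $x_n\to x_0\in\overline{B_{1/2}^+}$, and radii $r_n$, with $\osc_{B_{r_n}^+(x_n)} v_{i_0,n}\ge\tfrac12 L_n r_n^{\alpha}$. Since oscillations never exceed $2M$, necessarily $r_n\to0$ and, along a further subsequence, $L_n r_n^{\alpha}\to\ell\in[0,4M]$.

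Now rescale: set $w_{i,n}(X):=\bigl(v_{i,n}(x_n+r_nX)-v_{i,n}(x_n)\bigr)/(L_n r_n^{\alpha})$ on $B_{1/(4r_n)}^+$. By the very definition of $L_n$ one has the uniform growth control $\osc_{B_\rho^+(X_0)} w_{i,n}\le\rho^{\alpha}$ over the admissible range of $X_0,\rho$, which is nothing but a uniform $\C^{0,\alpha}$ bound; hence, up to a subsequence, $w_{i,n}\to w_i$ in $\C^0_{\loc}$ by Ascoli--Arzel\`a, with $|w_i(X)|\le|X|^{\alpha}$ and $\osc_{\overline{B_1^+}} w_{i_0}\ge\tfrac12$, so $w_{i_0}$ is non-constant. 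Each $w_{i,n}$ solves $L_a w_{i,n}=0$ in the interior of the rescaled domain and, on the flat part,
\[
\partial^a_\nu w_{i,n}=\frac{r_n^{2s}}{L_n r_n^{\alpha}}\,f_{i,\beta_n}\!\bigl(v_{i,n}(x_n+r_n\cdot)\bigr)-\frac{\beta_n r_n^{2s}}{L_n r_n^{\alpha}}\,v_{i,n}(x_n+r_n\cdot)\tsum_{j\ne i}a_{ij}v_{j,n}^2(x_n+r_n\cdot)=:g_{i,n}-M_{i,n},
\]
where, imposing $\alpha<2s$, $g_{i,n}\to0$ locally uniformly (by the uniform bound on the $f_{i,\beta}$) while $M_{i,n}\ge0$. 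Testing the equations for $\bv_{\beta_n}$ against suitable cutoffs yields the uniform bounds $\int_{B^+}|y|^a|\nabla v_{i,n}|^2\eta^2\le C$ and $\beta_n\int_{\partial^0}|v_{i,n}|\tsum_j a_{ij}v_{j,n}^2\eta^2\le C$; combined with Caccioppoli's inequality for $w_{i,n}$ — in which the $M_{i,n}$-contribution carries a favourable sign once one uses the non-subtracted, hence non-negative, rescalings (the complementary situation, where $v_{i_0,n}(x_n)/(L_nr_n^{\alpha})\to\infty$, being treated separately and feeding into the first alternative below) — this provides $H^{1;a}_{\loc}$ bounds up to the flat boundary. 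Passing to the limit, $L_a w_i=0$ in the half-space and $\partial^a_\nu w_i=-\mu_i$ for a non-negative Radon measure $\mu_i$; after replacing, if necessary, the $w_i$ by their positive parts and by the non-subtracted limits $V_i=\lim v_{i,n}(x_n+r_n\cdot)$ (when $\ell>0$), the family so obtained is a \emph{segregated configuration}: every member is non-negative with $L_a$-subharmonic even reflection, the traces have pairwise disjoint supports, and the growth at infinity is $O(|X|^{\alpha})$.

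It remains to derive a contradiction from the existence of such a configuration with $w_{i_0}$ (or $V_{i_0}$) non-constant. A case analysis, according to the limits of $L_n r_n^{\alpha}$ and of the effective competition strength, reduces matters to two alternatives. If \emph{no competition survives} the rescaling, then $\partial^a_\nu w_i=0$ for all $i$, so the even reflection of $w_{i_0}$ is an entire $L_a$-harmonic function with subpolynomial growth $O(|X|^{\alpha})$; as soon as $\alpha<\nuLio$, the oscillation-decay (Liouville) theorem for $L_a$-harmonic functions forces it to be constant, contradicting $\osc_{\overline{B_1^+}} w_{i_0}\ge\tfrac12$. If instead \emph{competition survives}, at least two of the limit profiles, say $V_1,V_2$, are non-constant, non-negative, with $L_a$-subharmonic even reflections, disjoint trace supports, and growth $O(|X|^{\alpha})$; the Alt--Caffarelli--Friedman-type monotonicity formula for the weight $|y|^a$ — stating that
\[
r\longmapsto\prod_{i=1,2}\frac{1}{r^{2\nuACF}}\int_{B_r^+}\frac{|y|^a|\nabla V_i|^2}{|X|^{N+a-1}}\,\de X
\]
is non-decreasing, where $\nuACF=\nuACF(N,a)>0$ is the critical homogeneity of a $|y|^a$-weighted two-phase partition of $\S^N$ — forces this product to vanish identically whenever $\alpha<\nuACF$: the growth bound makes it tend to $0$ as $r\to\infty$, while monotonicity makes it non-decreasing, hence $V_1$ or $V_2$ is constant, a contradiction. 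Choosing therefore $\alpha<\alpha(N,s):=\min\{\gamma_0,2s,\nuLio,\nuACF\}>0$ closes the contradiction and establishes the uniform $\C^{0,\alpha}$ bound. The relative compactness in $H^{1;a}(B_{1/2}^+)\cap\C^{0,\alpha}(\overline{B_{1/2}^+})$ then follows from the uniform $\C^{0,\alpha}$ and $H^{1;a}_{\loc}$ bounds, weak $H^{1;a}$-compactness, the passage of $\beta\int_{\partial^0}v_{i,\beta}\tsum_j a_{ij}v_{j,\beta}^2$ to the limit (which yields the segregation $v_i v_j\equiv0$ on $\partial^0 B_{1/2}^+$), and the customary upgrade to strong $H^{1;a}$-convergence obtained by testing the equations against $v_{i,\beta}-v_i$.

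The heart of the matter — and the expected main obstacle — is the second alternative: establishing the ACF-type monotonicity formula for the degenerate/singular operator $L_a$ (the weight $|y|^a$ enters both the volume element and the spherical eigenvalue problem on caps of $\S^N$, so the classical Cauchy--Schwarz/symmetrisation scheme must be genuinely reworked), and, no less delicate, checking that the blow-up limits really satisfy its hypotheses — in particular that the competition term converges to a measure supported on the segregation set, despite being controlled a priori only in $L^1_{\loc}$. The precise choice of the blow-up scale $r_n$ (and, if needed, a secondary blow-up to rule out an intermediate weak-competition regime) is what makes the dichotomy above clean.
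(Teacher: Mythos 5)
The overall architecture of your argument — contradiction, blow-up, Liouville classification of the limit — matches the paper. But there are two substantive problems, one of which means your claimed choice of $\alpha$ is simply too large when $s>1/2$.

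First and most seriously, your dichotomy ``no competition survives / competition survives'' misses the phenomenon of \emph{self-segregation}, which is precisely what forces the restriction $\alpha<2s-1$ when $s>1/2$. Your second alternative kills two nontrivial components by the two-phase ACF monotonicity; and your first alternative handles the case $\partial_\nu^a w_i=0$ on all of $\R^N$. But there is a third possibility, occurring after the competition blows up ($M_{i,n}\to\infty$): exactly one component $w$ survives, it vanishes on a nontrivial subset $\mathcal{Z}\subset\R^N$, and satisfies $\partial_\nu^a w=0$ only on $\R^N\setminus\mathcal{Z}$. This is not ruled out by either of your Liouville theorems. When $s\le 1/2$ such a $\mathcal{Z}$ of dimension $\le N-1$ has zero $L_a$-capacity and one reduces to your first alternative; but when $s>1/2$ the function $|(x_1,y)|^{2s-1}$ (whose trace vanishes exactly on $\{x_1=0\}$) shows that codimension-one null sets carry positive capacity. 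The paper handles this with a separate one-phase ACF formula for functions vanishing on a hyperplane (Proposition~\ref{prp: ACF N-1}) plus an Almgren/Pohozaev argument to reduce the generic zero set to an affine subspace (Corollary~\ref{cor: holder homogeneous}), yielding the necessary threshold $\alpha<2s-1$. Your proposed bound $\alpha<\min\{\gamma_0,2s,\nuLio,\nuACF\}$ does not include $2s-1$ and is therefore wrong for $s>1/2$ (note $2s-1\to 0$ as $s\to(1/2)^+$, so this is not a cosmetic correction).

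Second, the subtracted normalization $w_{i,n}=\bigl(v_{i,n}(x_n+r_n\cdot)-v_{i,n}(x_n)\bigr)/(L_nr_n^{\alpha})$ destroys the sign structure of the competition term: you no longer know $w_{i,n}\ge 0$, so the term $M_{i,n}$ in $\partial_\nu^a w_{i,n}$ need not have a favourable sign in the Caccioppoli estimate, and the ``complementary situation'' you defer (where $v_{i_0,n}(x_n)/(L_nr_n^\alpha)\to\infty$) is in fact the generic one — it is precisely what the paper must control. The paper instead keeps the non-subtracted rescaling and proves separately (via the decay estimate Lemma~\ref{lem: decay with perturbations}) that $\bw_n(0)$ stays bounded; this is the step that lets one pass to a limit still satisfying the segregated system. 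Without an analogous boundedness argument, the passage from the rescaled sequence to a limit that satisfies the hypotheses of any of your Liouville theorems is not justified.

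A smaller point: the paper does not use interior $A_2$-De Giorgi--Nash--Moser to fix an exponent $\gamma_0$; it uses a boundary regularity lemma (Lemma~\ref{lem: sire_dupaigne}, combining Fabes--Kenig--Serapioni with Silvestre's half-space estimate) to get a uniform $\C^{0,\alpha^*}$ bound for \emph{bounded} $\beta$, and the contradiction argument is run with a cutoff $\eta$ that vanishes on $\partial^+B^+$ — this is what makes the Hölder quotient maximum stay away from the spherical boundary (Lemma~\ref{lem: acc non a part+}). Your ``routine interior/boundary patching'' glosses over this localization, which is nontrivial because the $\C^{0,\alpha}$ norm on $\overline{B_{1/2}^+}$ must be controlled up to the degenerate boundary $\{y=0\}$.
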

The above result allows to prove its natural global counterpart, either on the whole of $\R^N$ or on
domains with suitable boundary conditions.
\begin{theorem}[Global uniform H\"older bounds]\label{thm: glob_bdd_intro}
Let $f_{i,\beta}$ and $\alpha$ be as in the previous theorem, and let $\{\bu_{\beta}\}_{\beta }$ be a family of $H^{s}(\R^N)$ solutions to the problems
\[
    \begin{cases}
    ( -\Delta)^{s} u_i= f_{i,\beta}(u_i) - \beta u_i \tsum_{j \neq i} a_{ij}u_j^2 & \text{in } \;\Omega\\
    u_i\equiv 0 &\text{in} \; \R^N\setminus \Omega,
    \end{cases}
\]%end{equation}
where $\Omega$ is a bounded domain of $\R^N$, with smooth boundary. Then
\[
    \| \bu_{\beta} \|_{L^{\infty}(\Omega)} \leq M
    \quad\implies\quad
    \| \bu_\beta\|_{\C^{0,\alpha}(\R^N)} \leq C(M,\alpha).
\]
\end{theorem}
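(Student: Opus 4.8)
The plan is to deduce Theorem~\ref{thm: glob_bdd_intro} from the local estimate, Theorem~\ref{thm:_local_holder}, by gluing an interior H\"older bound, obtained by scaling, to a $\beta$-uniform decay estimate near $\partial\Omega$, obtained from a $\beta$-independent barrier -- the same scheme already carried out for $s=1/2$ in \cite{tvz1}. Throughout, $\bv_\beta$ denotes the $L_a$-harmonic extension of $\bu_\beta$ to $\R^{N+1}_+$; then $\bv_\beta$ solves $\problem{\beta}^s$, now posed on $\R^{N+1}_+$ with the boundary equations holding on $\Omega\times\{0\}$ and its trace vanishing on $(\R^N\setminus\Omega)\times\{0\}$, and $\|\bv_\beta\|_{L^\infty(\R^{N+1}_+)}=\|\bu_\beta\|_{L^\infty(\R^N)}=\|\bu_\beta\|_{L^\infty(\Omega)}\le M$ by the maximum principle for $L_a$. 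Set $C_0:=\sup\{|f_{i,\beta}(t)|:\ i,\beta,\ |t|\le M\}<\infty$; as is customary for this class of systems, we also assume the solutions nonnegative.

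\emph{Interior estimate.} For $x_0\in\Omega$ put $\rho:=\tfrac14\min\{\dist(x_0,\partial\Omega),1\}$, so that $B_{2\rho}(x_0)\subset\Omega$. The rescaled functions $\tilde\bv(X):=\bv_\beta(x_0+\rho X)$ solve $\problem{\rho^{2s}\beta}^s$ in $B^+_1$, with reaction terms $\rho^{2s}f_{i,\beta}$ that are still continuous and uniformly bounded on bounded sets, with a bound independent of $x_0$, of $\rho\in(0,\tfrac14]$ and of $\beta$, and with $\|\tilde\bv\|_{L^\infty(B^+_1)}\le M$. Theorem~\ref{thm:_local_holder} therefore gives $\|\tilde\bv\|_{\C^{0,\alpha}(\overline{B^+_{1/2}})}\le C(M,\alpha)$, and, scaling back and restricting to $\{y=0\}$,
\[
    [\bu_\beta]_{\C^{0,\alpha}(B_{\rho/2}(x_0))}\le C(M,\alpha)\,\rho^{-\alpha}
\]
uniformly in $\beta$. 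The point is that Theorem~\ref{thm:_local_holder} holds for \emph{every} $\beta>0$ with one and the same constant, so the $\beta$-dependence $\rho^{2s}\beta$ of the rescaled problems is immaterial.

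\emph{Boundary decay.} Since $u_{i,\beta}\ge0$, the competition term is nonpositive, whence $(-\Delta)^su_{i,\beta}\le f_{i,\beta}(u_{i,\beta})\le C_0$ in $\Omega$, while $u_{i,\beta}\equiv0$ in $\R^N\setminus\Omega$. Let $w$ be the ($\beta$-independent) solution of $(-\Delta)^sw=C_0$ in $\Omega$, $w\equiv0$ in $\R^N\setminus\Omega$. The comparison principle for $(-\Delta)^s$ with exterior data yields $0\le u_{i,\beta}\le w$ in $\R^N$, while the classical boundary regularity for the fractional Dirichlet problem on the smooth domain $\Omega$ gives $w(x)\le C\dist(x,\partial\Omega)^s$. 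Thus
\[
    0\le u_{i,\beta}(x)\le C\,\dist(x,\partial\Omega)^s\qquad(x\in\R^N),
\]
uniformly in $\beta$; together with the interior continuity this shows that $\bu_\beta$ extends continuously to $\overline\Omega$, vanishing on $\partial\Omega$.

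\emph{Conclusion, and the main difficulty.} The two displays are precisely the hypotheses of the standard lemma interpolating an interior H\"older estimate against a boundary decay rate. Writing $r:=|x-x'|$ and $\delta:=\min\{\dist(x,\partial\Omega),1\}$ for $x,x'\in\Omega$ (say with $\dist(x,\partial\Omega)\le\dist(x',\partial\Omega)$): if $r\lesssim\delta$, combining the interior estimate at scale $\delta$ with the decay estimate gives $|u_{i,\beta}(x)-u_{i,\beta}(x')|\le\min\{C\delta^s,\,C(M,\alpha)\delta^{-\alpha}r^\alpha\}\le C(M,\alpha)\,r^{\alpha s/(\alpha+s)}$, the last step being the elementary interpolation $\min\{u,v\}\le u^{\alpha/(\alpha+s)}v^{s/(\alpha+s)}$ (in which $\delta$ cancels); if $r\gtrsim\delta$, the decay estimate (when $\dist(x,\partial\Omega)\lesssim r$) or the $L^\infty$ bound (when $r$ is bounded below) again gives a bound of this form. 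Hence $\|\bu_\beta\|_{\C^{0,\alpha'}(\overline\Omega)}\le C(M,\alpha)$ with $\alpha':=\alpha s/(\alpha+s)>0$; since $\bu_\beta\equiv0$ on $\R^N\setminus\Omega$ and $\bu_\beta=0$ on $\partial\Omega$, splitting any segment from a point of $\Omega$ to a point of $\R^N\setminus\Omega$ at a point of $\partial\Omega$ upgrades this to $\|\bu_\beta\|_{\C^{0,\alpha'}(\R^N)}\le C(M,\alpha)$, and -- Theorem~\ref{thm:_local_holder} being valid also for every exponent below $\alpha(N,s)$ -- we may keep denoting $\alpha'$ by $\alpha$. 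I expect the only genuinely delicate step to be the boundary decay: near $\partial\Omega$ one cannot simply rescale and quote Theorem~\ref{thm:_local_holder}, and a direct blow-up there would require a boundary analogue of the monotonicity/Liouville machinery behind that theorem; what makes it elementary instead is that, for nonnegative solutions, the $\beta$-dependent competition term can be discarded, reducing the decay to a $\beta$-free linear comparison on the smooth domain $\Omega$.
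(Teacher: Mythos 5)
Your proposal is correct in outline but takes a genuinely different route from the paper's. The paper proves Theorem~\ref{thm: glob_bdd_intro} by rerunning the blow-up analysis near $\partial\Omega$: after a covering argument, a contradiction sequence concentrating at a boundary point is rescaled so that the trace vanishes on a half-space in the limit, and the contradiction is supplied by the boundary Liouville result Proposition~\ref{prp: liouville_boundary} (which itself rests on the one-phase ACF formula of Proposition~\ref{prp: ACF sym}); the fixed-$\beta$ boundary regularity from \cite{serra} is used only to make that blow-up legitimate. You instead avoid the boundary blow-up entirely, replacing it by a $\beta$-free comparison with the torsion-type function $w$ solving $(-\Delta)^s w=C_0$ in $\Omega$, $w\equiv0$ outside, whose $\dist(\cdot,\partial\Omega)^s$ decay you then interpolate against the scaled interior estimate. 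This is a more elementary scheme -- no boundary ACF formula, no boundary Liouville theorem -- but it buys less in two respects, both of which you flag. First, the barrier step uses $u_{i,\beta}\ge0$ to discard the competition term; the paper's blow-up argument does not need any sign condition (none is stated in the theorems, and Propositions~\ref{prp: ACF sym} and~\ref{prp: liouville_boundary} are sign-free), so your proof, as written, covers a strictly smaller class unless one adds the nonnegativity hypothesis to the standing assumptions. Second, the interpolation $\min\{\delta^s,\delta^{-\alpha}r^\alpha\}\lesssim r^{\alpha s/(\alpha+s)}$ degrades the exponent to $\alpha s/(\alpha+s)<\alpha$, whereas the paper's boundary blow-up, being run against a Liouville theorem valid for all $\gamma<s$ and combined with $\alpha^*\le\nuACF\le s$, preserves the same $\alpha$ as the local theorem. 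Since both theorems are ultimately of the form ``there exists $\alpha(N,s)>0$,'' your weaker exponent still yields a correct statement of the same type, but it does not literally prove the theorem with ``the same $\alpha$ as in the previous theorem'' as the statement's wording suggests, and it is worth knowing that the extra loss is an artifact of your method, not intrinsic to the problem.
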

Of course, a natural question regards the optimal regularity of such problems, that is
the maximal value of $\alpha$ for the above results to hold. In the case of the standard
diffusion ($s=1$), the analogous issue is faced in \cite{nttv}, where uniform H\"older bounds
are shown for every $\alpha<1$. The proof of this result relies on a blow-up procedure,
leading to a contradiction with some Liouville type theorems; these are based on the validity
of some monotonicity formulae of Alt-Caffarelli-Friedman and Almgren type.
In \cite{tvz1}, we consider the case $s=1/2$. In the situation there, a
two-step strategy has been developed: indeed, though providing some uniform H\"older bounds, the above blow-up procedure  seems not enough to catch the optimal regularity threshold.
The main reason for this failure is the lack of an exact Alt-Caffarelli-Friedman formula,
so that the bounds, at a first stage, are obtained only when $\alpha$ is smaller than some number $\nuACF>0$,
which is not explicit.
Nonetheless, this provides enough compactness to trigger the second step of the strategy,
based on the classification of the possible profiles obtained through a blow-down argument.
At the end of the procedure, uniform bounds for any $\alpha<1/2$ are shown.
In this perspective, Theorem \ref{thm:_local_holder} here corresponds to the first step (the blow-up procedure) of the
strategy just described, extended to the general case $s\in(0,1)$. The exponent $\alpha$
mentioned there is subject to two main restrictions: as before,
$\alpha$ is bounded above by the minimal rate of growth for multi-phase segregation profiles $\nuACF$;
on the other hand, when $s>1/2$, a new upper threshold must be taken into account, which is
related to the phenomenon of self-segregation.

The first restriction, as we mentioned, is related to the validity of an exact Alt-Caffarelli-Friedmann
formula, which in turn depends on an optimal partition problem. More precisely, let $\S^{N}_+ := \partial^+ B^+$.
For each open $\omega\subset\S^{N-1}:=\partial\S^N_+$ we define the first $s$-eigenvalue
associated to $\omega$ as
\begin{equation}\label{eqn: deflambda1}
    \lambda_1^s(\omega) :=
    \inf\left\{
    \frac{\int_{\S^{N}_+} y^a|\nabla_{T} u|^2 \, \de{\sigma}
    }{\int_{\S^{N}_+} y^a u^2\, \de{\sigma}} :
    u \in H^{1;a}(\S^{N}_+), \, u\equiv0 \text{ on }\S^{N-1}\setminus\omega \right\},
\end{equation}
where $\nabla_{T} u$ is the tangential gradient of $u$ on $\S^{N}_+$.
The minimal rate of growth for multi-phase segregation profiles is given by the number
\begin{equation}\label{eqn: def_nuacf}
\begin{split}
    \nuACF :&= \inf\left\{\frac{\gamma({\lambda_1^s(\omega_1)})+\gamma({\lambda_1^s(\omega_2)})}2 :{\omega_1\cap\omega_2=\emptyset}\right\},
\end{split}
\end{equation}
where, as usual,
\[
\gamma(t) := \sqrt{ \left(\frac{N-2s}{2} \right)^2 + t} - \frac{N-2s}{2}
\]
is defined in such a way that $u$ achieves $\lambda_1^s(\omega)$ if and only if it is one signed, and its $ \gamma (\lambda_1^s(\omega))$-homogeneous extension to $\R^{N+1}_+$ is $L_a$-harmonic. As
a peculiar difference with respect to the case $s=1$, we remark that
the eigenfunctions achieving $\nuACF$ have not disjoint support
on the whole $\S^N_+$, but only on its boundary $\S^{N-1}$. In particular, the
degenerate partition $(\emptyset,\S^{N-1})$ is admissible, and one can show that
it has the same level than the equatorial cut one:
\[
\frac{\gamma({\lambda_1^s(\emptyset)})+\gamma({\lambda_1^s(\S^{N-1})})}2
=
\frac{\gamma({\lambda_1^s(\S^{N-1}_+)})+\gamma({\lambda_1^s(\S^{N-1}_-)})}2
=s.
\]
As a consequence, the above optimal partition problem does not enjoy the same convexity
properties than the one corresponding to $s=1$, and we can only show that
\[
0<\nuACF\leq s.
\]

Turning to self-segregation, the main point is that the fundamental solution
\[
\Gamma(X) = \frac{C_{N,s}}{|X|^{N-2s}},
\]
turns out to be bounded near $0$ and $H^{1;a}(B)$, whenever $s>1/2$, $N=1$. This
implies that, when $s>1/2$, $N\geq2$, the function
\[
v(x,y) = (x_1^2+y^2)^{(2s-1)/2}
\]
is positive and $L_a$-harmonic for $y>0$, $\partial_\nu^a v (x,0)=0$ whenever
$v(x,0)\neq0$, and its trace on $\R^N$ has disconnected positivity regions. Moreover, such
self-segregated profile is globally H\"older continuous, of exponent $\alpha = 2s - 1$ which
is arbitrarily small as $s \to (1/2)^+$.
The phenomenon of self-segregation can be excluded in some situations, for instance when $s\leq1/2$ (for
capacitary reasons), or when suitable minimality conditions are imposed (as in \cite{crs}). Nonetheless,
in general it is hard to tackle: for the case $s=1$ it was excluded only recently, in \cite{dwz3}.

To conclude we stress that, by exploiting the compactness provided by Theorem \ref{thm:_local_holder},
the optimal regularity should arise from the classification of
suitable blow-down profiles. Also this point presents a number of new difficulties
with respect to the case $s=1/2$, and it will be the object of a forthcoming paper.

%%%%%%%%%%%%%%%%%%%
\section{Monotonicity formulae}\label{sec:acf}
%%%%%%%%%%%%%%%%%%%%
This section is devoted to the introduction of some monotonicity formulae, which will provide suitable estimates in order to prove some Liouville type results. Our first aim is to prove monotonicity formulae of Alt-Caffarelli-Friedman type for the one phase problem: these will imply non existence results for $L_a$-harmonic functions under different assumptions on their growth at infinity and on the geometry of their null set.

Secondly, we will concentrate on systems of degenerate elliptic equations, providing monotonicity formulae of Alt-Caffarelli-Friedman type with two phases, and of Almgren type.
\subsection{One phase Alt-Caffarelli-Friedman formulae}
We first deal with $L_a$-harmonic functions (on $\R^{N+1}_+$) which vanish on the whole $\R^N$.
\begin{proposition}\label{prp: ACF y 2s}
Let $v \in H^{1;a}(B_R^+)$ be a continuous function such that
\begin{itemize}
   \item  $v(x,0) = 0$ for $x \in \R^N$;
   \item  for every non negative $\phi \in \C_0^{\infty}(B_R)$,
   \[
   \int\limits_{\R^{N+1}_+}(L_a v)v\phi \, \de x \de y + \int\limits_{\R^N} (\partial_\nu^a v)v\phi \,
   \de x = \int\limits_{\R^{N+1}_+} y^a \nabla v\cdot\nabla(v\phi) \, \de x \de y \leq0.
   \]
\end{itemize}
Then the function
\[
    \Phi(r) := \frac{1}{r^{4s}} \int\limits_{B_r^+} y^a
\frac{|\nabla v|^2}{|X|^{N-2s} } \,\de{x} \de{y}
\]
is monotone non decreasing in $r$ for $r \in (0,R)$.
\end{proposition}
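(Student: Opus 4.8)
The plan is to prove monotonicity of $\Phi$ by differentiating $\log\Phi(r)$ and showing the derivative is non-negative, following the classical Alt-Caffarelli-Friedman scheme adapted to the degenerate weight $y^a$. The key observation is that the kernel $|X|^{-(N-2s)}$ is (up to constant) the fundamental solution of $L_a$ on $\R^{N+1}$, so that the integrand $y^a |\nabla v|^2 |X|^{-(N-2s)}$ is the natural weighted analogue of the one appearing in the standard case. First I would compute $\Phi'(r)$: using the coarea formula one gets, schematically,
\[
    \frac{\Phi'(r)}{\Phi(r)} = -\frac{4s}{r} + \frac{\int_{\partial^+ B_r^+} y^a |\nabla v|^2 |X|^{-(N-2s)}\,\de\sigma}{\int_{B_r^+} y^a |\nabla v|^2 |X|^{-(N-2s)}\,\de x\,\de y},
\]
so the task reduces to a lower bound for the boundary integral in terms of the bulk integral, i.e. an inequality of the form
\[
    \int_{B_r^+} y^a \frac{|\nabla v|^2}{|X|^{N-2s}}\,\de x\,\de y \le \frac{r}{4s}\int_{\partial^+ B_r^+} y^a \frac{|\nabla v|^2}{|X|^{N-2s}}\,\de\sigma.
\]

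To obtain this, the standard device is a Rellich-Pohozaev type identity for $v$ on $B_r^+$. I would test the distributional inequality $\int y^a\nabla v\cdot\nabla(v\phi)\le 0$ (or rather the corresponding equality away from the trace, since $v$ is $L_a$-harmonic in $B_r^+$ with vanishing weighted normal derivative wherever $v(x,0)\ne 0$, which here is nowhere since $v(\cdot,0)\equiv 0$) with $\phi$ built from the radial vector field $X\cdot\nabla v$, suitably cut off, to produce an identity relating $\int_{B_r^+} y^a|\nabla v|^2|X|^{-(N-2s)}$ to $\int_{\partial^+B_r^+} y^a(\partial_\nu v)^2 |X|^{-(N-2s)}$ plus lower-order tangential terms. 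The crucial point is that the exponent $4s$ is exactly tuned so that the weight $|X|^{-(N-2s)}$ has the right homogeneity: the bulk term carries homogeneity degree matching $r^{4s}$, so that after the Pohozaev manipulation the spherical term dominates. One then discards the non-negative tangential contribution $|\nabla_T v|^2$ on $\partial^+ B_r^+$, keeping only $(\partial_\nu v)^2 \le |\nabla v|^2$, which gives precisely the desired inequality; the sign condition in the hypothesis (the $\le 0$) is what allows the argument to go through even if $v$ is only a subsolution rather than a solution.

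An alternative, perhaps cleaner, route is the change of variables to logarithmic/cylindrical coordinates $X = e^{-t}\theta$ with $\theta\in\S^N_+$, under which $L_a$-harmonicity on $\R^{N+1}_+$ becomes an equation on the half-cylinder $\R\times\S^N_+$ with the spherical weight $(\theta_{N+1})^a$, and $\Phi(r)$ transforms into a weighted Dirichlet-type quantity whose monotonicity follows from a one-dimensional ODE argument combined with the Poincaré inequality on $\S^N_+$ governing $\lambda_1^s$; this also makes transparent why $4s$ (equivalently $2\gamma(\lambda_1^s(\S^{N-1}))=2s$ doubled, or rather twice the homogeneity $2s$) is the correct scaling exponent. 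I expect the main obstacle to be the careful justification of the Pohozaev/Rellich identity in the degenerate weighted setting: the vector field $X\cdot\nabla v$ is not admissible as a test function directly (it lacks regularity and integrability near $y=0$ and near $X=0$), so one must regularize by truncating near the origin and near the flat boundary, integrate by parts using that $y^a$ is an $A_2$-weight (so $v\in H^{1;a}$ enjoys enough trace and approximation properties), and then pass to the limit controlling the error terms — in particular checking that the boundary contribution on $\partial^0 B_r^+$ vanishes because $v(\cdot,0)\equiv 0$ forces $\nabla_x v(\cdot,0)=0$ and the weighted normal derivative term is handled by the hypothesis. The singularity of the kernel at $X=0$ requires an additional inner cutoff whose error is shown to vanish using the $H^{1;a}$ bound and the fact that $N-2s<N+1$ keeps the kernel locally weighted-integrable.
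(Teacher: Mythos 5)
Your proposal correctly sets up the logarithmic derivative of $\Phi$ and reduces the task to the boundary-to-bulk ratio estimate, and it correctly senses that the exponent $4s$ must come from a spectral quantity on the half-sphere. However, there is a genuine gap in how you propose to produce the ratio estimate, and the paper's actual mechanism is different.

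The paper does not use a Rellich--Pohozaev identity with the field $X\cdot\nabla v$. Instead it tests the subsolution inequality with $\phi = \eta_\delta\,\Gamma^s_\eps$, where $\Gamma^s_\eps$ is a $\C^1$ regularization of the fundamental solution $|X|^{2s-N}$ that is radial, $L_a$-superharmonic, and has vanishing weighted normal derivative on $\{y=0\}$. Combining this with the superharmonicity of $\Gamma^s_\eps$ (tested against $v^2/2$) and letting $\eps\to0$ yields the key sub-mean-value estimate
\[
\int_{B_r^+} y^a \frac{|\nabla v|^2}{|X|^{N-2s}}\,\de x\,\de y
\;\le\;
\frac{1}{r^{N-2s}}\int_{\partial^+ B_r^+} y^a v\,\partial_\nu v\,\de\sigma
+\frac{N-2s}{2r^{N+1-2s}}\int_{\partial^+B_r^+} y^a v^2\,\de\sigma.
\]
This is structurally different from a Pohozaev identity: it bounds the weighted Dirichlet integral in terms of $v\,\partial_\nu v$ and $v^2$ on the sphere, not in terms of $(\partial_\nu v)^2$ with a factor of $r$. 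Your claimed inequality $\int_{B_r^+}\le \frac{r}{4s}\int_{\partial^+ B_r^+}$ is the \emph{conclusion} one wants, not something Pohozaev gives directly, and ``discarding the tangential contribution'' would only weaken the spherical integral you are trying to bound from below, so that step goes in the wrong direction.

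What is genuinely missing is the spectral step. After the sub-mean-value estimate and rescaling $v^{(r)}(\xi)=v(r\xi)$, one writes the spherical Dirichlet integral as $\int \xi_{N+1}^a|v^{(r)}|^2(t^2+\mathcal R)$ with $t^2$ the normalized normal derivative and $\mathcal R$ the Rayleigh quotient of $v^{(r)}$ on $\S^N_+$, bounds the denominator by Cauchy--Schwarz as $\int\xi_{N+1}^a|v^{(r)}|^2\bigl(t+\tfrac{N-2s}{2}\bigr)$, and minimizes $\frac{\mathcal R+t^2}{t+(N-2s)/2}$ over $t>0$, obtaining $2\gamma(\mathcal R)$. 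The exponent $4s$ then arises because $v(\cdot,0)\equiv0$ forces $v^{(r)}$ to vanish on $\S^{N-1}$, whence $\mathcal R\ge\lambda_1^s(\emptyset)$, and the eigenfunction $y^{2s}$ shows $\gamma(\lambda_1^s(\emptyset))=2s$, so the ratio is $\ge \frac{2}{r}\cdot 2s = \frac{4s}{r}$. None of this chain — fundamental-solution test, Cauchy--Schwarz plus one-parameter minimization, identification of $\lambda_1^s(\emptyset)$ — appears in your Route 1. Your Route 2 (cylindrical coordinates and a Poincar\'e inequality on $\S^N_+$) does gesture at the right spectral ingredient and is a viable alternative strategy in principle, but as written it is only a sketch and does not supply the identity nor isolate the role of the boundary condition $v(\cdot,0)\equiv0$ in fixing the constant to $\lambda_1^s(\emptyset)$, which is the crux of the proof.
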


\begin{remark}\label{rem:chang_sign_ACF_segr}
Since
\begin{equation}\label{eqn: acf_per_moduli}
\int\limits_{\R^{N+1}_+}y^a \nabla v\cdot\nabla(v\phi) \, \de x \de y =
\int\limits_{\R^{N+1}_+}y^a \left[|\nabla v|^2\phi + \frac12\nabla v^2\cdot\nabla\phi\right] \de x \de y,
\end{equation}
we have that if $v$ satisfies the assumptions of Proposition \ref{prp: ACF y 2s} then
also $|v|$ does.
\end{remark}

\begin{definition}\label{def:Gamma_1}
We define ${\Gamma_1^s} \in \C^1(\R^{N+1}_+; \R^+)$ as
\[
    {\Gamma_1^s}(X) :=
    \begin{cases}
        \frac{1}{|X|^{N-2s}} & |X|\geq 1\\
        \frac{N+2(1-s)}{2} - \frac{N-2s}{2} |X|^2 & |X| < 1.
    \end{cases}
\]
We let also $\Gamma_{\eps}^s(X) = {\Gamma_1}^s(X/\eps) \eps^{2s-N}$, so that
$\Gamma_{\eps}^s \nearrow \Gamma^s = |X|^{2s-N}$, a multiple of the fundamental
solution of the $s$-laplacian, as $\eps \to 0$.
\end{definition}

\begin{remark}
We observe that each ${\Gamma_\eps^s}$ is radial and, in
particular, $\partial_{\nu}^a {\Gamma_\eps^s} = 0$ on $\R^N$. Moreover, since $N-2s>0$,
they are $L_a$-superharmonic on $\R^{N+1}_+$.
\end{remark}
The proof of Proposition \ref{prp: ACF y 2s} is based on the following calculation. Incidentally, we observe that
also the following monotonicity results rest on a similar argument.
\begin{lemma}\label{lem: phi is well defined}
Let $v$ be as in Proposition \ref{prp: ACF y 2s}. The
function
\begin{equation}\label{eqn: improper integral}
   r\mapsto\int\limits_{B_r^+} y^a \frac{|\nabla v |^2}{|X|^{N-2s}}
\,\de{x} \de{y}
\end{equation}
is well defined and bounded in any compact subset of $(0,1)$.
\end{lemma}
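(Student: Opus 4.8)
The plan is to show that the function $r \mapsto \int_{B_r^+} y^a |\nabla v|^2 / |X|^{N-2s} \,\de x \de y$ is finite on $(0,1)$ by using the approximating kernels $\Gamma_\eps^s$ from Definition \ref{def:Gamma_1}, which are smooth, radial, and bounded near the origin, so that integrals against them are manifestly finite, and then passing to the limit $\eps \to 0$ with the help of the energy inequality in the hypothesis of Proposition \ref{prp: ACF y 2s}.

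First I would fix $r \in (0,1)$ and a cutoff $\phi \in \C_0^\infty(B_1)$ with $0 \le \phi \le 1$ and $\phi \equiv 1$ on $B_r$, and test the inequality
\[
\int_{\R^{N+1}_+} y^a \nabla v \cdot \nabla(v \psi) \,\de x \de y \le 0
\]
with $\psi = \Gamma_\eps^s \phi$ (which is admissible since $\Gamma_\eps^s$ is bounded and $C^1$, and $\phi$ has compact support, so $\psi \ge 0$ is a legitimate test function once one checks $v\psi \in H^{1;a}$ with compact support; this uses $v \in H^{1;a}(B_1^+)$ and the continuity of $v$). Expanding the product rule as in \eqref{eqn: acf_per_moduli},
\[
\int_{\R^{N+1}_+} y^a \Gamma_\eps^s \phi \, |\nabla v|^2 \,\de x \de y \le -\frac12 \int_{\R^{N+1}_+} y^a \nabla(v^2) \cdot \nabla(\Gamma_\eps^s \phi) \,\de x \de y.
\]
Now I would integrate by parts on the right-hand side, moving the gradient off $v^2$; the key algebraic point is that the boundary term on $\R^N$ vanishes because $v(x,0) = 0$, and the resulting bulk term is $\frac12 \int y^a v^2 \, L_a(\Gamma_\eps^s \phi)$ up to sign (using $\partial_\nu^a \Gamma_\eps^s = 0$). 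On $B_r^+$ the weight $\phi$ is constant, so $L_a(\Gamma_\eps^s \phi) = \phi \, L_a \Gamma_\eps^s \le 0$ there since $\Gamma_\eps^s$ is $L_a$-superharmonic, which makes that part of the right-hand side have a favourable sign after accounting for the overall minus; away from $B_r^+$ the derivatives of $\phi$ produce a term controlled by $\|v\|_{L^\infty}^2$, $\Gamma_1^s$ and $\phi$, hence by a constant independent of $\eps$ and $r$ (for $r$ in a compact subset of $(0,1)$).

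Combining these estimates yields
\[
\int_{B_r^+} y^a \Gamma_\eps^s \, |\nabla v|^2 \,\de x \de y \le C(v,\phi,r),
\]
with $C$ uniform in $\eps$. Since $\Gamma_\eps^s \nearrow |X|^{2s-N}$ monotonically, the monotone convergence theorem gives $\int_{B_r^+} y^a |\nabla v|^2 / |X|^{N-2s} \,\de x \de y \le C(v,\phi,r) < \infty$, and a symmetric lower bound (the integrand is nonnegative) shows the integral is well defined; tracking the dependence of $C$ on $r$ shows it stays bounded as $r$ ranges over a fixed compact subset of $(0,1)$. The main obstacle I anticipate is the careful justification of the integration by parts against the singular (in the limit) kernel: one must verify that $v \Gamma_\eps^s \phi$ is a valid test function in $H^{1;a}$ with the right support properties, control the cross terms $\nabla v \cdot \nabla \Gamma_\eps^s$ produced by differentiating the product without circular use of the very finiteness one is trying to prove, and check that the $A_2$-weight $y^a$ does not spoil the boundary integration by parts on $\partial^0 B_1^+$ — here the hypothesis $v(x,0)=0$ together with the continuity of $v$ is exactly what is needed to kill the flat boundary term. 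A secondary technical point is ensuring all manipulations are first carried out for $v$ smooth (or via an approximation/density argument in $H^{1;a}$) and then extended, since $L_a v$ a priori only makes sense in the weak sense specified in the hypothesis.
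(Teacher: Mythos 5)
Your argument is correct and proves what the lemma claims, so let me compare it to the paper's route. Both proofs test the variational inequality with $\Gamma_\eps^s$ times a cutoff, exploit the $L_a$-superharmonicity of $\Gamma_\eps^s$, and finish by monotone convergence as $\eps\to 0^+$. One small slip: $L_a$-superharmonicity means $L_a\Gamma_\eps^s\geq 0$ (the paper's $L_a$ already carries the sign of $-\div$), not $\leq 0$; your ``favourable sign after accounting for the overall minus'' is nevertheless the right conclusion. The genuine difference is the cutoff strategy. You fix a single $\phi\equiv 1$ on $B_r$ supported in $B_1$, integrate by parts to put $L_a$ on the \emph{smooth} kernel $\Gamma_\eps^s\phi$, and absorb the annular error with $\|v\|_{L^\infty}$; this yields finiteness and boundedness on compacts, and your anticipated technical concerns (admissibility of $v\Gamma_\eps^s\phi$ as a test function, integration by parts for $v^2\in H^{1;a}_{\loc}$) are real but standard. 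The paper instead uses a shrinking radial cutoff $\eta_\delta\in\C_0^\infty(B_{r+\delta})$, rewrites the cutoff error by radial slicing/coarea (so no integration by parts against $v^2$ is needed at that step), and lets $\delta\to 0^+$ by Lebesgue differentiation; combined with testing $L_a\Gamma_\eps^s\geq 0$ against $v^2/2$, this produces the sharp inequality \eqref{eqn: estim from above denom},
\[
\int_{B_r^+} y^a \frac{|\nabla v|^2}{|X|^{N-2s}}\,\de{x}\de{y} \;\leq\; \frac{1}{r^{N-2s}}\int_{\partial^+ B_r^+} y^a v\,\partial_\nu v\,\de{\sigma} \;+\; \frac{N-2s}{2r^{N+1-2s}}\int_{\partial^+ B_r^+} y^a v^2\,\de{\sigma},
\]
featuring only boundary integrals on $\partial^+ B_r^+$ and no $L^\infty$ norm. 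That precision is not cosmetic: \eqref{eqn: estim from above denom} is exactly the estimate that the subsequent proof of Proposition \ref{prp: ACF y 2s} feeds into the logarithmic derivative $\Phi'/\Phi$ to reach the Rayleigh-quotient minimization on $\S^N_+$. Your version proves the lemma but does not produce that reusable identity, so continuing to the proposition would force you to redo the estimate with the tighter cutoff. For the lemma alone the two arguments buy the same thing.
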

\begin{proof}
We proceed as follows: let $\eps > 0$, $\delta>0$ and let
$\eta_\delta\in \C^\infty_0 (B_{r+\delta})$ be a smooth, radial cutoff function such that
$0\leq\eta_\delta\leq1$ and $\eta_\delta=1$ on $B_r$.
Choosing $\phi=\eta_\delta \Gamma_{\eps}^s$ in the second assumption of Proposition \ref{prp: ACF y 2s},
and recalling equation \eqref{eqn: acf_per_moduli}, we obtain
\begin{multline*}
\int\limits_{\R^{N+1}_+}y^a \left[|\nabla v |^2\Gamma_{\eps}^s +
\frac12\nabla v^2\cdot\nabla\Gamma_{\eps}^s\right]\eta_\delta
\de x \de y
\leq -\int\limits_{\R^{N+1}_+}\frac12 y^a \Gamma_{\eps}^s\nabla v^2\cdot\nabla\eta_\delta \de x \de y\\
= \int\limits_{r}^{r+\delta} \left[-\eta_\delta'(\rho)\int\limits_{\partial^+B^+_\rho} y^a \Gamma_{\eps}^sv \nabla v
\cdot \frac{X}{|X|}\de\sigma \right]\de\rho.
\end{multline*}
Passing to the limit as $\delta\to0$ we obtain, for almost every $r\in(0,1)$,
\[
\int\limits_{B^+_r}y^a \left[|\nabla v |^2\Gamma_{\eps}^s +
\frac12\nabla (v )^2\cdot\nabla\Gamma_{\eps}^s\right]\de x \de y
\leq
\int\limits_{\partial^+B^+_r}y^a \Gamma_{\eps}^sv   \partial_\nu v \de\sigma,
\]
which, combined with the inequality $L_a \Gamma_{\eps}^s \geq 0$ tested with
$v ^2/2$ leads to
\[
    \int\limits_{B_r^+} y^a |\nabla v |^ 2 \Gamma_{\eps}^s \, \de{x}\de{y} \leq
\int\limits_{\partial^+ B_r^+}y^a \left(\Gamma_{\eps}^s v  {\partial_\nu v  }
 - \frac{v ^2}{2} {\partial_\nu} \Gamma_{\eps}^s
\right)\, \de{\sigma}.
\]
Letting $\eps \to 0^+$, by monotone convergence we infer
\begin{equation}\label{eqn: estim from above denom}
    \int\limits_{B_r^+} y^a \frac{|\nabla v |^ 2}{|X|^{N-2s}} \, \de{x}\de{y} \leq
\frac{1}{r^{N-2s}} \int\limits_{\partial^+ B_r^+} y^a v  \frac{\partial v
}{\partial \nu} \, \de{\sigma} + \frac{N-2s}{2r^{N+1-2s}} \int\limits_{\partial^+
B_r^+} y^a v ^2 \, \de{\sigma}
\end{equation}
and this, in turns, proves the lemma.
\end{proof}

\begin{proof}[Proof of Proposition \ref{prp: ACF y 2s}]
By Remark \ref{eqn: acf_per_moduli} we can assume, without loss of generality, that $v$ is
(non trivial and) non negative, and that $R=1$. We start observing that the function $\Phi(r)$ is
positive and absolutely continuous for $r \in (0,1)$. Therefore,
the proposition follows once we prove that $\Phi'(r) \geq 0$ for almost every $r \in
(0,1)$. A direct computation of the logarithmic derivative of $\Phi$ shows that
\[
    \frac{\Phi'(r)}{\Phi(r)} = -\frac{4s}{r} +
\dfrac{\int\limits_{\partial^+ B_r^+}y^a |\nabla v|^2 /|X|^{N-2s} \,\de{\sigma}
}{\int\limits_{B_r^+} y^a |\nabla v|^2 / |X|^{N-2s} \,\de{x} \de{y}}.
\]
First we use the estimate \eqref{eqn: estim from above denom} to bound from below the left hand side:
\begin{multline*}
    \dfrac{\int\limits_{\partial^+ B_r^+}y^a |\nabla v |^2/|X|^{N-2s}
\,\de{\sigma} }{\int\limits_{B_r^+} y^a |\nabla v |^2 /|X|^{N-2s} \,\de{x}
\de{y}} \geq  \dfrac{\int\limits_{\partial^+ B_r^+} y^a |\nabla v |^2 \,\de{\sigma}
}{\int\limits_{\partial^+ B_r^+} v  y^a \partial_{\nu} v  \, \de{\sigma} + (N-2s)
\frac{r}{2} \int\limits_{\partial^+ B_r^+} y^a v ^2 \, \de{\sigma} }\\ =  \frac{1}{r}
\dfrac{\int\limits_{\S^{N}_+} \xi_{N+1}^a |\nabla v ^{(r)}|^2 \,\de{\sigma} }{\int\limits_{\S^{N}_+}
v ^{(r)} \xi_{N+1}^a \partial_{\nu} v ^{(r)} \, \de{\sigma} + \frac{N-2s}{2}
\int\limits_{\S^{N}_+} \xi_{N+1}^a (v ^{(r)})^2 \, \de{\sigma} },
\end{multline*}
where $v ^{(r)}\from \S^{N-1}_+ \to \R$ is defined as $v ^{(r)}(\xi) =
v (r\xi)$, so that $y = r \xi_{N+1}$. We now
estimate the right hand side as follows: the numerator writes
\begin{multline*}
    \int\limits_{\S^{N}_+} \xi_{N+1}^a|\nabla v ^{(r)}|^2 \, \de{\sigma}  = \int\limits_{\S^{N}_+}
\xi_{N+1}^a |\partial_{\nu} v ^{(r)}|^2 \, \de{\sigma}  + \int\limits_{\S^{N}_+} \xi_{N+1}^a |\nabla_T
v ^{(r)}|^2 \, \de{\sigma}  \\
    = \int\limits_{\S^{N}_+} \xi_{N+1}^a |v ^{(r)}|^2 \, \de{\sigma}  \left( \underbrace{\frac{
\int\limits_{\S^{N}_+} \xi_{N+1}^a|\partial_{\nu} v ^{(r)}|^2 \, \de{\sigma} }{\int\limits_{\S^{N}_+}
\xi_{N+1}^a |v ^{(r)}|^2 \, \de{\sigma} } }_{t^2} + \underbrace{\frac{\int\limits_{\S^{N}_+}
\xi_{N+1}^a |\nabla_T v ^{(r)}|^2 \, \de{\sigma} }{\int\limits_{\S^{N}_+} \xi_{N+1}^a |v ^{(r)}|^2 \,
\de{\sigma}} }_{\mathcal{R}} \right).
\end{multline*}
where $\mathcal{R}$ stands for the Rayleigh quotient of $v ^{(r)}$ on
$\S^{N}_+$. On the other hand, by the Cauchy-Schwarz inequality, the denominator
may be estimated from above by
\begin{multline*}
    \int\limits_{\S^{N}_+} \xi_{N+1}^a v ^{(r)} \partial_{\nu} v ^{(r)} \, \de{\sigma} +
\frac{N-2s}{2} \int\limits_{\S^{N}_+} \xi_{N+1}^a |v ^{(r)}|^2 \, \de{\sigma} \\
\leq
\left(\int\limits_{\S^{N}_+} \xi_{N+1}^a |v ^{(r)}|^2 \, \de{\sigma}\right)^{1/2}
\left(\int\limits_{\S^{N}_+} \xi_{N+1}^a \partial_{\nu} v ^{(r)} \, \de{\sigma}\right)^{1/2} +
\frac{N-2s}{2} \int\limits_{\S^{N}_+} \xi_{N+1}^a |v ^{(r)}|^2 \, \de{\sigma}\\
    \leq\int\limits_{\S^{N}_+} \xi_{N+1}^a |v ^{(r)}|^2 \, \de{\sigma} \left[
\underbrace{\left(\frac{ \int\limits_{\S^{N}_+} \xi_{N+1}^a |\partial_{\nu} v ^{(r)}|^2 \,
\de{\sigma} }{\int\limits_{\S^{N}_+} \xi_{N+1}^a |v ^{(r)}|^2 \, \de{\sigma} }\right)^{1/2}}_{t} +
\frac{N-2s}{2} \right].
\end{multline*}
As a consequence
\begin{equation}\label{eqn: stima dal basso ACF}
    \dfrac{\int\limits_{\partial^+ B_r^+} y^a |\nabla v |^2/|X|^{N-2s}
\,\de{\sigma} }{\int\limits_{B_r^+} y^a |\nabla v |^2 /|X|^{N-2s} \,\de{x}
\de{y}} \geq \frac{1}{r} \min_{t \in \R^+} \frac{\mathcal{R}+t^2}{ t +
\frac{N-2s}{2}}.
\end{equation}
A simple computation shows that the minimum is achieved when
\[
    t = \gamma( {\mathcal{R}}) = \sqrt{ \left(\frac{N-2s}{2}\right)^2 +
\mathcal{R} } - \frac{N-2s}{2},
\]
and it is equal to $2\gamma( {\mathcal{R}})$.
Recalling the definition of $\lambda_1^s(\emptyset)$ (equation \eqref{eqn: deflambda1}) we obtain
\[
    \frac{\Phi'(r)}{\Phi(r)}  + \frac{4s}{r} \geq \frac{2}{r}\gamma(\lambda^s_1(\emptyset))
\]
and the proposition follows observing that $\lambda_1^s(\emptyset)$ is achieved by
$v(x,y)= y^{2s}$, in such a way that
\[
    \gamma\left(\lambda_1^s\left(\emptyset\right)\right) = 2s. \qedhere
\]
\end{proof}
Now we turn to functions which vanish only on a half space.
\begin{proposition}\label{prp: ACF sym}
Let $v \in H^{1;a}(B_R^+)$ be a continuous function such that
\begin{itemize}
   \item  $v(x,0) = 0$ for $x_1 \leq 0$;
   \item  for every non negative $\phi \in \C_0^{\infty}(B_R)$,
   \[
   \int\limits_{\R^{N+1}_+}(L_a v)v\phi \, \de x \de y + \int\limits_{\R^N} (\partial_\nu^a v)v\phi \,
   \de x = \int\limits_{\R^{N+1}_+} y^a \nabla v\cdot\nabla(v\phi) \, \de x \de y \leq0.
   \]
\end{itemize}
Then the function
\[
    \Phi(r) := \frac{1}{r^{2s}} \int\limits_{B_r^+} y^a
\frac{|\nabla v|^2}{|X|^{N-2s} } \,\de{x} \de{y}
\]
is monotone non decreasing in $r$ for $r \in (0,R)$.
\end{proposition}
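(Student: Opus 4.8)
The plan is to transcribe the proof of Proposition \ref{prp: ACF y 2s}, replacing $4s$ by $2s$ throughout and the one‑phase eigenvalue $\lambda_1^s(\emptyset)$ by the half‑equator eigenvalue $\lambda_1^s(\S^{N-1}_+)$. First I would record two reductions. Since identity \eqref{eqn: acf_per_moduli} is insensitive to where $v$ vanishes on $\R^N$, Remark \ref{rem:chang_sign_ACF_segr} still applies, and $|v|$ also vanishes on $\{x_1\le0\}\times\{0\}$; hence we may assume $v\ge0$, $v\not\equiv0$ and $R=1$. Secondly, I would observe that the proof of Lemma \ref{lem: phi is well defined}, and in particular the key estimate \eqref{eqn: estim from above denom}, uses only the second (sub‑solution) assumption on $v$ together with the radiality of $\Gamma^s_\eps$ (through $\partial^a_\nu\Gamma^s_\eps=0$ on $\R^N$), and never the fact that $v$ vanishes on the whole flat boundary. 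Therefore \eqref{eqn: estim from above denom} holds verbatim here, $\Phi$ is well defined, positive and absolutely continuous on $(0,1)$, and it suffices to show $\Phi'(r)\ge0$ for a.e.\ $r$.

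Computing the logarithmic derivative of $\Phi(r)=r^{-2s}\int_{B_r^+}y^a|\nabla v|^2/|X|^{N-2s}\,\de{x}\de{y}$ gives
\[
\frac{\Phi'(r)}{\Phi(r)} = -\frac{2s}{r} + \dfrac{\int_{\partial^+B_r^+}y^a|\nabla v|^2/|X|^{N-2s}\,\de{\sigma}}{\int_{B_r^+}y^a|\nabla v|^2/|X|^{N-2s}\,\de{x}\de{y}},
\]
which is the same expression as in Proposition \ref{prp: ACF y 2s} up to the first term. Bounding the quotient from below by \eqref{eqn: estim from above denom} and then arguing exactly as there — rewriting the numerator in terms of the Rayleigh quotient $\mathcal{R}$ of $v^{(r)}$ on $\S^N_+$, estimating the denominator by Cauchy--Schwarz, and minimizing $(\mathcal{R}+t^2)/(t+\tfrac{N-2s}{2})$ over $t\in\R^+$ (the minimum being $2\gamma(\mathcal{R})$) — one arrives at
\[
\frac{\Phi'(r)}{\Phi(r)} + \frac{2s}{r} \ge \frac{2}{r}\,\gamma(\mathcal{R}).
\]

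To conclude it remains to check $\gamma(\mathcal{R})\ge s$. For a.e.\ $r$ the function $v^{(r)}$ belongs to $H^{1;a}(\S^N_+)$ and, since $v$ vanishes on $\{x_1\le0\}\times\{0\}$, it vanishes on $\S^{N-1}\setminus\S^{N-1}_+$; hence $v^{(r)}$ is admissible in \eqref{eqn: deflambda1} for $\omega=\S^{N-1}_+$, so that $\mathcal{R}\ge\lambda_1^s(\S^{N-1}_+)$ and, $\gamma$ being increasing, $\gamma(\mathcal{R})\ge\gamma(\lambda_1^s(\S^{N-1}_+))$. Finally $\gamma(\lambda_1^s(\S^{N-1}_+))=s$, which is the value recorded in the introduction: indeed $(x_1)_+^s$ is $s$‑harmonic on $\{x_1>0\}$ with zero exterior datum, so its $L_a$‑harmonic (Caffarelli--Silvestre) extension to $\R^{N+1}_+$ is positive, $s$‑homogeneous, $L_a$‑harmonic, vanishes on $\{x_1\le0\}$ and satisfies $\partial^a_\nu v=0$ on $\{x_1>0\}$; its restriction to $\S^N_+$ is then a positive eigenfunction whose $s$‑homogeneous extension is $L_a$‑harmonic, which by the characterization recalled after \eqref{eqn: def_nuacf} forces $\gamma(\lambda_1^s(\S^{N-1}_+))=s$. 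Combining the last displays yields $\Phi'(r)/\Phi(r)+2s/r\ge 2s/r$, i.e.\ $\Phi'(r)\ge0$.

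The only genuinely new point — and the expected obstacle — is this last identification $\gamma(\lambda_1^s(\S^{N-1}_+))=s$, that is, exhibiting the explicit positive solution of the half‑space problem: in Proposition \ref{prp: ACF y 2s} the extremal was simply $y^{2s}$, whereas here one must invoke, or verify directly (by a separation‑of‑variables computation in the $(x_1,y)$‑plane, or via the fractional Poisson kernel), that the trace $(x_1)_+^s$ solves the homogeneous problem. The one‑phase bound \eqref{eqn: estim from above denom} and the minimization step are a purely mechanical transcription of the previous argument.
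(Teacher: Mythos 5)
Your proof is correct and takes essentially the same approach as the paper: the paper's own proof is a two‑line pointer — ``follows the line of the one of Proposition \ref{prp: ACF y 2s}'' — supplemented only by the observation that the explicit $L_a$‑harmonic extension
\[
v(x,y)=\left(\frac{\sqrt{x_1^2+y^2}+x_1}{2}\right)^s
\]
of $(x_1)_+^s$ achieves $\gamma\bigl(\lambda_1^s(\S^{N-1}\cap\{x_1>0\})\bigr)=s$ (with a reference to \cite[p.\ 442]{css}), which is precisely the one new ingredient you isolate and verify at the end of your argument.
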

\begin{proof}
The proof follows the line of the one of Proposition \ref{prp: ACF y 2s}, recalling that
\[
    v(x,y)= \left(\frac{\sqrt{x_1^2+y^2}+x_1}{2}\right)^s
\]
achieves
$
\gamma(\lambda_1^s(\S^{N-1}\cap\{x_1>0\}))=s
$
(see, for instance, \cite[page 442]{css}).
\end{proof}
In the previous propositions, we considered functions vanishing on the whole $\R^N$,
or on a half-space. Now, in great contrast with the case $s\leq1/2$, it is known that,
if $s>1/2$, then also $(N-1)$-dimensional subsets may have positive capacity. This motivates
the following formula, which is the analogous of the previous ones, for functions
which vanish on subspaces of $\R^N$ of codimension 1.
\begin{proposition}\label{prp: ACF N-1}
Let $s>1/2$ and let $v \in H^{1;a}(B_R^+)$ be a continuous function such that
\begin{itemize}
   \item  $v(x,0) = 0$ for $x_1 = 0$;
   \item  for every non negative $\phi \in \C_0^{\infty}(B_R)$,
   \[
   \int\limits_{\R^{N+1}_+}(L_a v)v\phi \, \de x \de y + \int\limits_{\R^N} (\partial_\nu^a v)v\phi \,
   \de x = \int\limits_{\R^{N+1}_+} y^a \nabla v\cdot\nabla(v\phi) \, \de x \de y \leq0.
   \]
\end{itemize}
Then the function
\[
    \Phi(r) := \frac{1}{r^{4s-2}} \int\limits_{B_r^+} y^a
\frac{|\nabla v|^2}{|X|^{N-2s} } \,\de{x} \de{y}
\]
is monotone non decreasing in $r$ for $r \in (0,R)$.
\end{proposition}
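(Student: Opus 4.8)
The plan is to mimic step by step the proof of Proposition \ref{prp: ACF y 2s}, the only change being the homogeneity exponent appearing in the definition of $\Phi$, which must now match the growth rate $\gamma(\lambda_1^s(\S^{N-1}\setminus\{x_1=0\}))=2s-1$ associated to the "removed hyperplane" geometry. As in Remark \ref{rem:chang_sign_ACF_segr}, I would first reduce to the case $v\ge0$, $R=1$. Since the hypotheses on $v$ are exactly those of Proposition \ref{prp: ACF y 2s} except for the location of the vanishing set, Lemma \ref{lem: phi is well defined} applies verbatim: the test-function computation with $\phi=\eta_\delta\Gamma^s_\eps$ never used where $v$ vanishes, so the integral \eqref{eqn: improper integral} is well defined, and estimate \eqref{eqn: estim from above denom} holds unchanged. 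Hence $\Phi$ is positive and absolutely continuous on $(0,1)$, and it suffices to show $\Phi'(r)\ge0$ a.e.

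Next I would compute the logarithmic derivative, obtaining
\[
\frac{\Phi'(r)}{\Phi(r)} = -\frac{4s-2}{r} +
\dfrac{\int_{\partial^+ B_r^+}y^a |\nabla v|^2 /|X|^{N-2s}\,\de\sigma}
{\int_{B_r^+} y^a |\nabla v|^2 / |X|^{N-2s}\,\de x\de y},
\]
and then bound the ratio on the right from below exactly as in the proof of Proposition \ref{prp: ACF y 2s}: apply \eqref{eqn: estim from above denom} to the denominator, rescale to $v^{(r)}(\xi)=v(r\xi)$ on $\S^N_+$, split $|\nabla v^{(r)}|^2$ into radial part $t^2$ and tangential Rayleigh quotient $\mathcal{R}$, and use Cauchy--Schwarz on $\int \xi_{N+1}^a v^{(r)}\partial_\nu v^{(r)}$ to arrive at
\[
\dfrac{\int_{\partial^+ B_r^+} y^a |\nabla v|^2/|X|^{N-2s}\,\de\sigma}
{\int_{B_r^+} y^a |\nabla v|^2 /|X|^{N-2s}\,\de x\de y}
\geq \frac{1}{r}\min_{t\in\R^+}\frac{\mathcal{R}+t^2}{t+\frac{N-2s}{2}}
= \frac{2}{r}\gamma(\mathcal{R}).
\]
The only genuinely new input is the lower bound on the Rayleigh quotient $\mathcal{R}$: since $v^{(r)}$ vanishes on $\S^{N-1}\cap\{x_1=0\}$, its trace lives on the admissible set $\omega=\S^{N-1}\setminus\{x_1=0\}$, whence $\mathcal{R}\ge\lambda_1^s(\S^{N-1}\setminus\{x_1=0\})$, and by monotonicity of $\gamma$,
\[
\frac{\Phi'(r)}{\Phi(r)}+\frac{4s-2}{r}\ge\frac{2}{r}\gamma\bigl(\lambda_1^s(\S^{N-1}\setminus\{x_1=0\})\bigr).
\]

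To conclude I must identify $\gamma(\lambda_1^s(\S^{N-1}\setminus\{x_1=0\}))=2s-1$, equivalently exhibit an optimizer for $\lambda_1^s$ on this punctured sphere whose $(2s-1)$-homogeneous extension to $\R^{N+1}_+$ is $L_a$-harmonic. This is precisely the self-segregation profile highlighted in the introduction: $v(x,y)=(x_1^2+y^2)^{(2s-1)/2}$ is positive, $L_a$-harmonic for $y>0$, satisfies $\partial_\nu^a v=0$ on $\R^N$ where $v\ne0$, vanishes on $\{x_1=0\}$, and is $(2s-1)$-homogeneous; restricting it to $\S^N_+$ gives the eigenfunction, so $\gamma(\lambda_1^s(\S^{N-1}\setminus\{x_1=0\}))=2s-1$. (Note $2s-1>0$ exactly because $s>1/2$, which is where this hypothesis is used, and which is why the $H^{1;a}$-boundedness of $\Gamma$ near $0$ — mentioned in the introduction — is what makes the punctured sphere an admissible competitor with finite eigenvalue in the first place.) The main obstacle, modest as it is, is this last verification: one has to check that $(x_1^2+y^2)^{(2s-1)/2}$ really is $L_a$-harmonic (a direct but slightly delicate computation with the degenerate weight $|y|^a$, reducing to the two-variable function $(x_1,y)\mapsto(x_1^2+y^2)^{(2s-1)/2}$ and using $a=1-2s$) and that it is the genuine minimizer, not merely a critical point, for $\lambda_1^s$ on the punctured sphere. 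Everything else is a line-by-line transcription of the proof of Proposition \ref{prp: ACF y 2s} with $4s$ replaced by $4s-2$ and $2s$ replaced by $2s-1$.
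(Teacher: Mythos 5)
Your proof is correct and essentially identical to the paper's: the paper too reduces the statement to identifying $\gamma(\lambda_1^s(\S^{N-1}\setminus\{x_1=0\}))=2s-1$ by exhibiting the positive, $(2s-1)$-homogeneous, $L_a$-harmonic function $v(x,y)=|(x_1,0,y)|^{2s-1}=(x_1^2+y^2)^{(2s-1)/2}$ (the one-dimensional fundamental solution extended constantly in the other directions) and then invokes the argument of Proposition \ref{prp: ACF y 2s} verbatim with $4s$ replaced by $4s-2$. The verification you flag as the "main obstacle" — that this function minimizes rather than merely criticizes the Rayleigh quotient — is already built into the paper's calibration of $\gamma$, which is defined precisely so that a one-signed function whose $\gamma(\lambda_1^s(\omega))$-homogeneous extension is $L_a$-harmonic achieves $\lambda_1^s(\omega)$.
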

\begin{proof}
Let $\bar\omega=\S^{N-1}\setminus\{x_1=0\}$, and let us consider the function
\[
v(x,y)=|(x_1,0,y)|^{2s-1},
\]
that is the fundamental solution in dimension 1, extended in a constant way to the other directions.
Then $v$ is $(2s-1)$-homogeneous, positive and $L_a$-harmonic for $y>0$. We deduce that
its restriction to $\partial^+ B_1^+ = \S^N_+$ is an eigenfunction associated to
$\lambda_1^s(\bar\omega)$, so that
\[
\gamma(\lambda_1^s(\bar\omega))=2s-1.
\]
As a consequence, also in this case the proposition follows by reasoning as in the proof of
Proposition \ref{prp: ACF y 2s}.
\end{proof}

\subsection{Two phases Alt-Caffarelli-Friedman monotonicity formulae}
Now we turn to the multi-component ACF formulae. We start by proving that
the constant $\nuACF$ defined in equation \eqref{eqn: def_nuacf} is not 0.
\begin{lemma}\label{lem: nuacf>0}
For any $N \geq 2$, $0<\nuACF\leq s$.
\end{lemma}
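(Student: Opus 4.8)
The plan is to establish the two inequalities separately, each reducing to a spectral comparison on the spherical cap $\S^N_+$. The upper bound $\nuACF \le s$ follows immediately from the computation already displayed in the introduction: taking the admissible (degenerate) partition $(\omega_1,\omega_2) = (\emptyset, \S^{N-1})$, one has $\lambda_1^s(\emptyset)$ achieved by $y^{2s}$ with $\gamma(\lambda_1^s(\emptyset)) = 2s$, while $\lambda_1^s(\S^{N-1})$ corresponds to the (nontrivial) lowest eigenvalue with no boundary constraint; since a constant is not admissible (it would need to vanish nowhere, but $H^{1;a}$ with zero tangential gradient forces it constant and the Rayleigh quotient is then $0$, yet the infimum in \eqref{eqn: deflambda1} over all of $\S^N_+$ with $u \equiv 0$ on $\emptyset$ is just $0$)—so actually $\gamma(\lambda_1^s(\S^{N-1})) = 0$, and the average is $s$. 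Hence $\nuACF \le s$.

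For the strict positivity, I would argue by contradiction and compactness. Suppose $\nuACF = 0$. Then there are sequences of disjoint open sets $\omega_1^{(n)}, \omega_2^{(n)} \subset \S^{N-1}$ with $\gamma(\lambda_1^s(\omega_i^{(n)})) \to 0$ for $i=1,2$, equivalently $\lambda_1^s(\omega_i^{(n)}) \to 0$ since $\gamma$ is a strictly increasing homeomorphism of $[0,\infty)$ with $\gamma(0)=0$. Let $u_i^{(n)} \in H^{1;a}(\S^N_+)$ be the corresponding normalized first eigenfunctions (one-signed, with $\int \xi_{N+1}^a (u_i^{(n)})^2 = 1$ and $\int \xi_{N+1}^a |\nabla_T u_i^{(n)}|^2 = \lambda_1^s(\omega_i^{(n)}) \to 0$). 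These are bounded in $H^{1;a}(\S^N_+)$, so up to subsequences $u_i^{(n)} \rightharpoonup u_i$ weakly in $H^{1;a}$ and strongly in the weighted $L^2$ (using the compact embedding for $A_2$-weighted spaces on the compact manifold-with-boundary $\S^N_+$). Then $\int \xi_{N+1}^a u_i^2 = 1$ and, by weak lower semicontinuity, $\int \xi_{N+1}^a |\nabla_T u_i|^2 = 0$, so each $u_i$ is a nonzero constant on $\S^N_+$, hence (being one-signed) strictly positive everywhere on $\overline{\S^N_+}$, in particular on $\S^{N-1}$.

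The contradiction must now come from the disjointness of the supports on the boundary. Since $u_i^{(n)} \equiv 0$ on $\S^{N-1} \setminus \omega_i^{(n)}$ and $\omega_1^{(n)} \cap \omega_2^{(n)} = \emptyset$, we have $\S^{N-1} = (\S^{N-1}\setminus\omega_1^{(n)}) \cup (\S^{N-1}\setminus\omega_2^{(n)})$, so at every boundary point at least one of $u_1^{(n)}, u_2^{(n)}$ vanishes; thus the product $u_1^{(n)} u_2^{(n)}$ has zero trace on $\S^{N-1}$ for every $n$. The key step is to pass this to the limit: I would use the trace operator $H^{1;a}(\S^N_+) \to L^2(\S^{N-1})$ (bounded, with good weighted Sobolev trace theory since $a \in (-1,1)$) together with strong $L^2_{\text{loc}}$ convergence to conclude that $u_1 u_2$ has zero trace, i.e. $u_1 u_2 \equiv 0$ on $\S^{N-1}$—contradicting $u_1 u_2 = (\text{positive const})^2 > 0$ there. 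I expect the main obstacle to be exactly this last passage to the limit: the trace embedding is not compact at the critical exponent, so one must either argue via the strong convergence in a slightly subcritical trace space, or pass through the full-disk extensions $U_i^{(n)}(r\xi) = r^{\gamma(\lambda_1^s(\omega_i^{(n)}))} u_i^{(n)}(\xi)$ on $B_1^+$—whose traces on $\partial^0 B_1^+$ multiply to zero—and use the strong $H^{1;a}(B^+_1)$ (hence strong trace) convergence of these homogeneous extensions, exploiting that the homogeneity exponents tend to $0$ so the extensions converge to constants; a Poincaré/trace argument on the half-ball then forces the limiting product trace to vanish. Once the contradiction is reached, $\nuACF > 0$ follows, completing the proof together with the upper bound.
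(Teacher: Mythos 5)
Your proposal follows the same two-part strategy as the paper: the upper bound by comparing with the degenerate partition $(\S^{N-1},\emptyset)$ (with $\gamma(\lambda_1^s(\S^{N-1}))=0$ from the constant and $\gamma(\lambda_1^s(\emptyset))=2s$ from $y^{2s}$), and the lower bound by contradiction and compactness. The one place you hedge — the passage to the limit of the product trace — is in fact the exact ingredient the paper invokes, and there is no gap: the trace operator $H^{1;a}(\S^N_+)\to L^2(\S^{N-1})$ is \emph{compact}, not merely bounded, because $2$ is strictly below the critical trace exponent for $a\in(-1,1)$. Hence $u_i^{(n)}\to u_i$ strongly in $L^2(\S^{N-1})$ along a subsequence, the products converge in $L^1(\S^{N-1})$, and $u_1u_2\equiv0$ on $\S^{N-1}$ follows directly — no detour through homogeneous extensions is needed. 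Your upper-bound paragraph is correct in substance but muddled mid-sentence: with $\omega=\S^{N-1}$ the constraint ``$u\equiv0$ on $\S^{N-1}\setminus\omega$'' is vacuous, so constants \emph{are} admissible and $\lambda_1^s(\S^{N-1})=0$ immediately, as you ultimately conclude.
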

\begin{proof}
The bound from above easily follows by comparing with the value corresponding
to the partition $(\S^{N-1},\emptyset)$: indeed, it holds $\lambda_1^s(\S^{N-1})=0$,
achieved by $u(x,y)\equiv1$, and $\lambda_1^s(\emptyset)=2sN$,
achieved by $u(x,y)=y^{1-a}$. In order to prove the estimate from below, one
can argue by contradiction, as in the proof of \cite[Lemma 2.5]{tvz1}, exploiting the compactness
both of the embedding $H^{1;a}(\S^N_+)\hookrightarrow L^{2;a}(\S^N_+)$ and of the
trace operator from $H^{1;a}(\S^N_+)$ to $L^{2}(\S^{N-1})$.
\end{proof}
We will prove two multi-component formulae, the first regarding entire profiles which
are segregated on $\R^N$, the second regarding profiles which coexist on $\R^N$.
\begin{proposition}\label{thm: ACF}
Let $v_1, v_2 \in H^{1;a}(B_R^+(x_0,0))$ be continuous functions such that
\begin{itemize}
   \item  $v_1 v_2|_{\{y=0\}} = 0$, $v_i(x_0,0) = 0$;
   \item  for every non negative $\phi \in \C_0^{\infty}(B_R(x_0,0))$,
   \[
   \int\limits_{\R^{N+1}_+}(L_a v_i)v_i\phi \, \de x \de y + \int\limits_{\R^N} (\partial_\nu^a v_i)v_i\phi \,
   \de x = \int\limits_{\R^{N+1}_+}y^a \nabla v_i\cdot\nabla(v_i\phi) \, \de x \de y \leq0.
   \]
\end{itemize}
Then the function
\[
    \Phi(r) := \prod_{i=1}^{2} \frac{1}{r^{2\nuACF}} \int\limits_{B_r^+(x_0,0)} y^a
\frac{|\nabla v_i|^2}{|X|^{N-2s} } \,\de{x} \de{y}
\]
is monotone non decreasing in $r$ for $r \in (0,R)$.
\end{proposition}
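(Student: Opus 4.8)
The plan is to follow the single-phase computation of Proposition \ref{prp: ACF y 2s}, performing it on each component $v_i$ separately and then combining the two estimates multiplicatively. First, by Remark \ref{rem:chang_sign_ACF_segr} we may assume each $v_i$ is non negative and non trivial, and we may take $x_0=0$, $R=1$. Set
\[
\Phi_i(r) := \frac{1}{r^{2\nuACF}} \int\limits_{B_r^+} y^a \frac{|\nabla v_i|^2}{|X|^{N-2s}}\,\de x \de y,
\]
so that $\Phi = \Phi_1 \Phi_2$. Exactly as in Lemma \ref{lem: phi is well defined}, each integral is well defined and each $\Phi_i$ is positive and absolutely continuous on $(0,1)$; hence it suffices to show $\Phi'(r) \geq 0$ a.e., which by the product rule amounts to
\[
\frac{\Phi'(r)}{\Phi(r)} = \frac{\Phi_1'(r)}{\Phi_1(r)} + \frac{\Phi_2'(r)}{\Phi_2(r)} \geq 0.
\]

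Next, I would repeat verbatim the chain of inequalities in the proof of Proposition \ref{prp: ACF y 2s}, applied to $v_i$, using the estimate \eqref{eqn: estim from above denom} for $v_i$ and then passing to the sphere via $v_i^{(r)}(\xi) = v_i(r\xi)$. This yields, for each $i$,
\[
\frac{\Phi_i'(r)}{\Phi_i(r)} + \frac{2\nuACF}{r} \geq \frac{1}{r}\min_{t \in \R^+}\frac{\mathcal{R}_i + t^2}{t + \frac{N-2s}{2}} = \frac{2}{r}\gamma(\mathcal{R}_i),
\]
where $\mathcal{R}_i$ is the Rayleigh quotient of $v_i^{(r)}$ on $\S^N_+$. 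Summing the two inequalities gives
\[
\frac{\Phi'(r)}{\Phi(r)} + \frac{4\nuACF}{r} \geq \frac{2}{r}\left(\gamma(\mathcal{R}_1) + \gamma(\mathcal{R}_2)\right),
\]
so the monotonicity will follow once we show
\[
\gamma(\mathcal{R}_1) + \gamma(\mathcal{R}_2) \geq 2\nuACF.
\]

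The heart of the matter is this last inequality, and I expect it to be the main obstacle. The point is that $v_1^{(r)}$ and $v_2^{(r)}$ vanish on disjoint open subsets of the equator $\S^{N-1}$ (this is the content of the segregation hypothesis $v_1 v_2|_{\{y=0\}} = 0$ together with continuity: the sets $\omega_i := \{\xi \in \S^{N-1} : v_i^{(r)}(\xi) \neq 0\}$ are open and disjoint). Since $v_i^{(r)}$ is an admissible competitor in the variational problem \eqref{eqn: deflambda1} defining $\lambda_1^s(\omega_i)$ — indeed it vanishes on $\S^{N-1}\setminus\omega_i$ — we get $\mathcal{R}_i \geq \lambda_1^s(\omega_i)$, and since $\gamma$ is monotone non decreasing, $\gamma(\mathcal{R}_i) \geq \gamma(\lambda_1^s(\omega_i))$. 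Therefore
\[
\frac{\gamma(\mathcal{R}_1) + \gamma(\mathcal{R}_2)}{2} \geq \frac{\gamma(\lambda_1^s(\omega_1)) + \gamma(\lambda_1^s(\omega_2))}{2} \geq \nuACF
\]
by the very definition \eqref{eqn: def_nuacf} of $\nuACF$ as the infimum over all disjoint pairs. The delicate points to check carefully are: that $v_i^{(r)} \in H^{1;a}(\S^N_+)$ for a.e.\ $r$ (a Fubini/coarea argument, as implicitly used already in the one-phase case); that $\omega_i$ is genuinely \emph{open} so it is an admissible domain in \eqref{eqn: deflambda1} (here continuity of $v_i$ up to $\{y=0\}$ is essential); and that the minimization over $t$ together with the monotonicity of $\gamma$ interacts correctly with the Rayleigh quotient bound — all of which parallel the single-phase argument. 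Once \eqref{eqn: def_nuacf} is invoked, the estimate closes and $\Phi' \geq 0$ a.e., giving the claim.
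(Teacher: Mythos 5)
Your proposal is correct and follows essentially the same route as the paper: compute the logarithmic derivative of each $\Phi_i$ via the single-phase estimate \eqref{eqn: stima dal basso ACF}, sum the two, observe that the continuity of the $v_i$ together with $v_1v_2|_{\{y=0\}}=0$ makes $\omega_1,\omega_2$ an admissible disjoint open pair in $\S^{N-1}$, and invoke the definition \eqref{eqn: def_nuacf} of $\nuACF$. The paper states this more tersely (``summing \eqref{eqn: stima dal basso ACF} for the two functions''), but your unpacking of the comparison $\mathcal{R}_i\ge\lambda_1^s(\omega_i)$ and the monotonicity of $\gamma$ is exactly what is meant.
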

\begin{proof}
Applying the same estimates developed for the proof of Proposition \ref{prp: ACF y 2s}, it is easy to see that the proposition is equivalent to (summing equation \eqref{eqn: stima dal basso ACF} for the two functions)
\[
   \Phi'(r) \geq 0 \Leftrightarrow \sum_{i=1}^2 \ddfrac{\int\limits_{\partial^+ B_r^+} y^a \tfrac{|\nabla v_i|^2}{|X|^{N-1}}
\,\de{\sigma} }{\int\limits_{B_r^+} y^a \tfrac{|\nabla v_i|^2}{|X|^{N-1}} \,\de{x} \de{y}}
\geq \frac{2}{r} \inf_{(\omega_1, \omega_2) \in \mathcal{P}^2} \sum_{i=1}^{2}
%\left( \sqrt{ \left(\frac{N-1}{2} \right)^2 + \lambda_1(\omega_i) } -\frac{N-1}{2} \right)
\gamma\left(\lambda_1^s(\omega_i) \right)
= \frac{4}{r}\nuACF
\]
In particular, the last inequality follows by the definition of $\nuACF$.
\end{proof}

\begin{proposition}\label{thm: ACF perturbed}
Let $v_1,v_2 \in H^{1;a}_{\loc}\left(\overline{\R^{N+1}_+}\right)$ be continuous functions such that, for every non negative $\phi \in \C^{\infty}_0\left(\overline{\R^{N+1}_+}\right)$ and $j\neq i $,
\begin{multline*}%\label{eqn: hp pert ACF}
\int\limits_{\R^{N+1}_+}(L_a v_i)v_i\phi \, \de x \de y + \int\limits_{\R^N} ( \partial_\nu^a v_i + a_{ij}v_iv_j^2)
v_i\phi \,\de x \\
= \int\limits_{\R^{N+1}_+} y^a \nabla v_i\cdot\nabla(v_i\phi) \, \de x \de y + \int\limits_{\R^N}a_{ij}v_i^2v_j^2\phi \, \de x  \leq0.
\end{multline*}
For any $\nu' \in (0, \nuACF)$ there exists $\bar{r} >1$ such that the function
\[
    \Phi(r) := \prod_{i=1}^{2} \Phi_i(r)
\]
is monotone non decreasing in $r$ for $r \in (\bar{r}, \infty)$, where
\[
    \Phi_i(r) := \frac{1}{r^{2\nu'}}\left(\int\limits_{B_r^+} y^a |\nabla v_i|^2 {\Gamma_1}
\,\de{x} \de{y} +  \int\limits_{\partial^0 B_r^+}a_{ij} v_i^2 v_j^2 {\Gamma_1} \,\de{x}
\right), \quad \text{ for } j \neq i.
\]
\end{proposition}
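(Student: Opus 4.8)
The plan is to mimic the proof of Proposition \ref{thm: ACF}, keeping track of the extra boundary term $\int_{\partial^0 B_r^+} a_{ij} v_i^2 v_j^2 \Gamma_1\,\de x$ and absorbing the errors generated by it as $r\to\infty$. First I would record the analogue of \eqref{eqn: estim from above denom} for each $v_i$ with $\Gamma_1^s$ in place of $\Gamma_\eps^s$: testing the weak inequality with $\phi=\eta_\delta\Gamma_1^s$, using that $L_a\Gamma_1^s\ge 0$ and $\partial^a_\nu\Gamma_1^s=0$ on $\R^N$, and letting $\delta\to 0$, one obtains for a.e.\ $r>1$
\[
\int\limits_{B_r^+} y^a |\nabla v_i|^2 \Gamma_1\,\de x\de y + \int\limits_{\partial^0 B_r^+} a_{ij} v_i^2 v_j^2 \Gamma_1\,\de x
\;\le\; \frac{1}{r^{N-2s}}\int\limits_{\partial^+ B_r^+} y^a v_i\,\partial_\nu v_i\,\de\sigma + \frac{N-2s}{2r^{N+1-2s}}\int\limits_{\partial^+ B_r^+} y^a v_i^2\,\de\sigma,
\]
where crucially the sign of the competition term on the left is the right one (it adds to the ``energy'' side), so it only helps in bounding $\Phi_i(r)$ from above by a boundary expression. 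Denote the left-hand side by $E_i(r)$ (so $\Phi_i(r)=E_i(r)/r^{2\nu'}$) and note $E_i(r)=\int_{B_r^+} y^a|\nabla v_i|^2\Gamma_1 + \int_{\partial^0 B_r^+} a_{ij}v_i^2v_j^2\Gamma_1$; for $r>1$ one has $E_i'(r)=\int_{\partial^+ B_r^+} y^a|\nabla v_i|^2/|X|^{N-2s}\,\de\sigma + (\text{a nonnegative boundary term from }\partial^0)$, hence $E_i'(r)\ge \int_{\partial^+ B_r^+} y^a|\nabla v_i|^2/|X|^{N-2s}\,\de\sigma$.

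Next I would compute the logarithmic derivative:
\[
\frac{\Phi'(r)}{\Phi(r)} = -\frac{4\nu'}{r} + \sum_{i=1}^2 \frac{E_i'(r)}{E_i(r)} \;\ge\; -\frac{4\nu'}{r} + \sum_{i=1}^2 \frac{\int_{\partial^+ B_r^+} y^a |\nabla v_i|^2/|X|^{N-2s}\,\de\sigma}{E_i(r)}.
\]
Using the upper bound for $E_i(r)$ above and rescaling to the sphere exactly as in Proposition \ref{prp: ACF y 2s} (writing $v_i^{(r)}(\xi)=v_i(r\xi)$), each summand is bounded below by $\tfrac1r \min_{t>0}(\mathcal R_i+t^2)/(t+\tfrac{N-2s}{2})=\tfrac2r\gamma(\mathcal R_i)$ where $\mathcal R_i$ is the Rayleigh quotient of $v_i^{(r)}$ on $\S^N_+$. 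The difference with the segregated case is that now the traces $v_1^{(r)}(\cdot,0)$ and $v_2^{(r)}(\cdot,0)$ on $\S^{N-1}$ need not have disjoint supports, so $\gamma(\mathcal R_1)+\gamma(\mathcal R_2)$ cannot be bounded directly by $2\nuACF$. The remedy is the standard ``almost optimal partition'' argument: the coexistence is penalized by the interaction integral $\int_{\partial^0 B_r^+} a_{ij}v_i^2v_j^2\Gamma_1$, which appears in the denominators $E_i(r)$, so one shows that on the set of $r$ where $\gamma(\mathcal R_1)+\gamma(\mathcal R_2)$ is close to some value $<2\nuACF$, the interaction term must be large relative to the Dirichlet energy, forcing $E_i(r)$ to be much bigger than $\int_{B_r^+}y^a|\nabla v_i|^2\Gamma_1$ and thereby restoring the inequality. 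Making this quantitative yields, for every $\nu'<\nuACF$, a continuity/compactness estimate of the form: there is $\theta=\theta(\nu',\nuACF)>0$ such that
\[
\gamma(\mathcal R_1) + \gamma(\mathcal R_2) + \theta\Big( \text{(interaction)}/\text{(energy)} \text{ terms} \Big) \;\ge\; 2\nu',
\]
valid once the traces are, say, $L^2$-normalized.

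The main obstacle, and the place requiring genuine work, is precisely this last step: quantifying how the boundary interaction term forces the two-phase spherical eigenvalue sum up to $2\nuACF$. Concretely I would argue by contradiction and compactness on the sphere (as in \cite[Lemma 2.5]{tvz1} and the proof of Lemma \ref{lem: nuacf>0}): if no such $\bar r$ existed, one would extract a sequence $r_n\to\infty$ along which $\Phi'(r_n)<0$, i.e.\ $\sum_i \gamma(\mathcal R_{i,n}) < 2\nu' + o(1) < 2\nuACF$ while the normalized interaction integrals tend to $0$; using the compactness of $H^{1;a}(\S^N_+)\hookrightarrow L^{2;a}(\S^N_+)$ and of the trace into $L^2(\S^{N-1})$, one passes to limits $w_1,w_2$ whose traces are disjointly supported on $\S^{N-1}$ and which satisfy $\sum_i\gamma(\mathcal R(w_i)) \le \liminf \sum_i\gamma(\mathcal R_{i,n}) < 2\nuACF$, contradicting the definition \eqref{eqn: def_nuacf} of $\nuACF$ (one must check the infimum defining $\lambda_1^s(\omega_i)$ is attained, or at least that the limiting pair gives an admissible competitor, handling the degenerate partition $(\emptyset,\S^{N-1})$ which is allowed here). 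A secondary technical point is justifying that $\Phi$ is absolutely continuous on $(\bar r,\infty)$ and that the termwise manipulation of $E_i'(r)$ — in particular discarding the nonnegative $\partial^0 B_r^+$ contribution to $E_i'$ — is legitimate; this is handled exactly as the absolute-continuity discussion in the proof of Proposition \ref{prp: ACF y 2s} together with Lemma \ref{lem: phi is well defined} adapted to $\Gamma_1$ (here no improper integral issue arises since $\Gamma_1$ is bounded). Once the quantitative spherical inequality is in hand, choosing $\bar r$ large enough that $2\nu' \le 2\nuACF - \theta'$ uniformly for $r>\bar r$ closes the argument and gives $\Phi'(r)\ge 0$ on $(\bar r,\infty)$.
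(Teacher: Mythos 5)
Your high-level plan — test with $\Gamma_1^s$, derive a log-derivative inequality for $\Phi$, reduce to a spherical eigenvalue estimate, then close by a contradiction/compactness argument on $\S^N_+$ — is exactly the strategy the paper intends (it explicitly defers to the analogous arguments in \cite[Lemma 2.5]{nttv} and \cite[Theorem 2.13]{tvz1}). The derivation of the analogue of \eqref{eqn: estim from above denom}, with the competition term sitting on the good side, is correct. The compactness endgame (trace compactness into $L^2(\S^{N-1})$, limits with disjointly supported traces, contradiction with the definition \eqref{eqn: def_nuacf}) is also the right idea.

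However, there is a genuine flaw in the middle of your chain, at the step where you set
\[
E_i'(r)\;\ge\;\int_{\partial^+ B_r^+} y^a\frac{|\nabla v_i|^2}{|X|^{N-2s}}\,\de\sigma
\]
and discard the nonnegative $\partial^0$ contribution to $E_i'$. That discarded term is the mechanism on which the whole proposition turns, and once it is dropped the chain you wrote,
\[
\frac{\Phi'(r)}{\Phi(r)}\;\ge\;-\frac{4\nu'}{r}+\frac{2}{r}\sum_{i=1}^2\gamma(\mathcal R_i),
\]
can be strictly negative: take $v_1\equiv v_2\equiv 1$, so $\mathcal R_1=\mathcal R_2=0$ and the right-hand side is $-4\nu'/r<0$, while the actual $\Phi$ (which is then dominated by the interaction integral, of order $r^{2s-2\nu'}$ per factor) is genuinely increasing. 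So the inequality you are trying to prove is not implied by your chain. Moreover, the ``remedy'' you sketch (the interaction ``appears in the denominator $E_i(r)$... forcing $E_i(r)$ to be much bigger than $D_i(r)$ and thereby restoring the inequality'') cannot work with the chain as stated: enlarging a denominator only makes the lower bound worse. Finally, the step in your contradiction argument asserting that ``the normalized interaction integrals tend to $0$'' along the sequence $r_n\to\infty$ has no justification once the $\partial^0$ term has been thrown away; that decay has to be extracted from exactly the term you dropped.

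The fix is to keep $E_i'(r)=D_i'(r)+J'(r)$ with $J(r)=\int_{\partial^0 B_r^+}a_{ij}v_i^2v_j^2\,\Gamma_1$. After rescaling $w_i(\xi)=v_i(r\xi)$, one finds $D_i'(r)=r^{-1}\int_{\S^N_+}\xi_{N+1}^a|\nabla w_i|^2\,\de\sigma$ while $J'(r)=r^{-1}\,r^{2s}\int_{\S^{N-1}}a_{ij}w_i^2w_j^2\,\de\sigma'$: the spherical interaction picks up a weight $r^{2s}\to\infty$. The Cauchy--Schwarz minimization then yields
\[
\sum_{i=1}^2\frac{E_i'(r)}{E_i(r)}\;\ge\;\frac2r\sum_{i=1}^2\gamma\!\left(\mathcal R_i+r^{2s}I_i\right),\qquad I_i:=\frac{\int_{\S^{N-1}}a_{ij}w_i^2w_j^2\,\de\sigma'}{\int_{\S^N_+}\xi_{N+1}^a w_i^2\,\de\sigma},
\]
and it is the divergent weight $r^{2s}$ that both (i) makes the right-hand side $\ge\tfrac{4\nu'}{r}$ for $r$ large, and (ii) forces $I_{i,n}\to 0$ along any contradicting sequence $r_n\to\infty$, so that the compactness argument you describe can legitimately produce disjointly supported limits. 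With that term reinstated, your proof goes through and does coincide with the paper's intended argument.
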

The proof of Proposition \ref{thm: ACF perturbed} is based on a contradiction argument, and follows the lines of the one of Proposition \ref{thm: ACF}. We do not report the details, referring the reader to \cite[Lemma 2.5]{nttv} and
\cite[Theorem 2.13]{tvz1}, where similar computations were developed for the case $s=1$ and $s = 1/2$, respectively.

\subsection{Almgren type monotonicity formula}\label{subsec: Alm seg prof}
To conclude this section on monotonicity formulae, we focus our attention on an Almgren quotient defined for a suitable class a functions: these will come into play as limits of a blow up sequence. First, for any
\[
    \bv \in H^{1;a}_{\loc}\left(\overline{\R^{N+1}_+}\right) := \{v : \forall D \subset \R^{N+1} \text{ open and bounded}, v|_{D^+} \in H^{1;a}(D^+)\},
\]
$\bv=(v_1,\dots,v_k)$ continuous, let use define
\[
    \begin{split}
    E(x_0, r) &:= \frac{1}{r^{N-2s}} \int\limits_{B^+_r(x_0,0)} y^a \tsum_{i}
|\nabla v_i|^2 \, \de{x} \de{y},\\
    H(x_0,r) &:= \frac{1}{r^{N+1-2s}}\int\limits_{\partial^+ B^+_r(x_0,0)} y^a
\tsum_{i} v_i^2 \, \de{\sigma},
    \end{split}
\]
where $x_0 \in \R^N$ and $r > 0$. By assumption, both $E$ and $H$ are locally absolutely continuous functions on $(0,+\infty)$, that is, both $E'$ and $H'$ are $L^1_{\loc}(0,\infty)$ (here, $'=\de/\de r$). Let us also consider the function (Almgren frequency function)
\[
    N(x_0,r) := \frac{E(x_0,r)}{H(x_0, r)}.
\]
We have the following result, which proof we omit since it follows with minor changes from the one of Theorem 3.3 in \cite{tvz1}.
\begin{proposition}\label{thm:_Almgren_for_classG_s}
Let $\bv \in H^{1;a}_\loc\left(\overline{\R^{N+1}_+}; \R^k\right)$, $\bv=(v_1,\dots,v_k)$ continuous, and let us assume that:
\begin{enumerate}
 \item $v_i v_j |_{y = 0} = 0$ for every $j\neq i$;
 \item for every $i$,
\begin{equation}\label{eqn: equation of classG_s}
    \begin{cases}
    L_a v_i = 0 & \text{ in } \R^{N+1}_+\\
    v_i \partial_{\nu}^a v_i = 0 & \text{ on } \R^N \times \{0\};
    \end{cases}
\end{equation}
\item for any $x_0\in\R^N$ and a.e. $r>0$, the following (Pohozaev type) identity holds
\[%\begin{equation}\label{eqn: Pohozaev spheres for classGs}
    (2s-N) \int\limits_{ B^+_r} y^a \tsum_{i} |\nabla v_i|^2 \, \de x \de y + r \int\limits_{\partial^+ B^+_r} y^a \tsum_{i} |\nabla v_i|^2 \, \de \sigma = 2 r \int\limits_{\partial^+ B^+_r} y^a \tsum_{i} |\partial_{\nu} v_i|^2 \, \de \sigma.
\]%\end{equation}
\end{enumerate}
Then for every $x_0 \in \R^N$ the Almgren frequency function $N(x_0,r)$ is well defined on $(0,\infty)$, absolutely continuous, non decreasing, and it satisfies the identity
\begin{equation}\label{eqn: logarithmic derivative of H}
    \frac{\mathrm{d}}{\mathrm{d}r} \log H(r) = \frac{2N(r)}{r}.
\end{equation}
Moreover, if $N(r)\equiv \gamma$ on an open interval, then $N\equiv\gamma$ for every $r$, and
$\bv$ is a homogeneous function of degree $\gamma$.
\end{proposition}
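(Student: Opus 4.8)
The plan is to establish the Almgren monotonicity formula for $N(x_0,r)$ by the classical route: compute $E'$ and $H'$, show $N' \ge 0$ via a Cauchy–Schwarz argument, and then use the rigidity case of equality. Since $x_0$ plays no role beyond translation, I fix $x_0$ and write $E(r)$, $H(r)$, $N(r)$. First I would record the differentiation formulae. For $H$: using the change of variables $\xi \mapsto r\xi$ on $\S^N_+$ and the fact that the weight $y^a = (r\xi_{N+1})^a$ scales homogeneously, one gets
\[
    H'(r) = \frac{2}{r^{N+1-2s}} \int\limits_{\partial^+ B^+_r} y^a \tsum_i v_i \partial_\nu v_i \, \de\sigma.
\]
For $E$: differentiating the domain integral and integrating by parts using $L_a v_i = 0$ in $\R^{N+1}_+$ together with the boundary condition $v_i \partial^a_\nu v_i = 0$ on $\{y=0\}$ — so that the flat boundary term vanishes — yields
\[
    E'(r) = \frac{2s-N}{r}E(r) + \frac{1}{r^{N-2s}}\int\limits_{\partial^+ B^+_r} y^a \tsum_i |\nabla v_i|^2\,\de\sigma,
\]
and then, invoking the Pohozaev identity in hypothesis (3), the last surface integral of $|\nabla v_i|^2$ can be replaced, giving
\[
    E'(r) = \frac{2}{r^{N-2s}} \int\limits_{\partial^+ B^+_r} y^a \tsum_i |\partial_\nu v_i|^2 \, \de\sigma.
\]
The key point of hypothesis (3) is precisely to turn $E'(r)$ into twice the surface integral of the squared normal derivatives, which is what makes the Cauchy–Schwarz step close.

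Next I would also need the identity $E(r) = \frac{1}{r^{N+1-2s}}\int_{\partial^+ B^+_r} y^a \tsum_i v_i \partial_\nu v_i\,\de\sigma$; this comes from testing $L_a v_i = 0$ against $v_i$ on $B^+_r$, integrating by parts, and using $v_i\partial^a_\nu v_i = 0$ on the flat part (this also shows that the segregation condition (1) is compatible: wherever $v_i \ne 0$ on $\{y=0\}$ one has $\partial^a_\nu v_i = 0$). Combining, $\frac{d}{dr}\log H(r) = H'(r)/H(r) = 2E(r)/H(r) \cdot r^{-1} \cdot (\ldots)$ — more precisely $H'(r) = \frac{2}{r}\cdot\frac{1}{r^{N-2s}}\int_{\partial^+B^+_r} y^a\tsum_i v_i\partial_\nu v_i\,\de\sigma = \frac{2}{r} E(r)\cdot\frac{r^{N+1-2s}}{r^{N+1-2s}}\cdot\frac{1}{H(r)}\cdot H(r)$, which after cancellation gives exactly \eqref{eqn: logarithmic derivative of H}. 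With these in hand, the logarithmic derivative of $N$ is
\[
    \frac{N'(r)}{N(r)} = \frac{E'(r)}{E(r)} - \frac{H'(r)}{H(r)},
\]
and substituting the expressions above reduces $N'(r)\ge0$ to the inequality
\[
    \left(\int\limits_{\partial^+ B^+_r} y^a \tsum_i |\partial_\nu v_i|^2\,\de\sigma\right)\left(\int\limits_{\partial^+ B^+_r} y^a \tsum_i v_i^2\,\de\sigma\right) \ge \left(\int\limits_{\partial^+ B^+_r} y^a \tsum_i v_i\partial_\nu v_i\,\de\sigma\right)^2,
\]
which is Cauchy–Schwarz in $L^2(\partial^+B^+_r; y^a\,\de\sigma)$ applied to the vectors $(v_1,\dots,v_k)$ and $(\partial_\nu v_1,\dots,\partial_\nu v_k)$.

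For the rigidity statement, suppose $N(r)\equiv\gamma$ on an open interval. Then $N'\equiv0$ there, so equality holds in Cauchy–Schwarz for a.e. $r$ in that interval, which forces $(\partial_\nu v_1,\dots,\partial_\nu v_k)(r\,\cdot\,) = c(r)\,(v_1,\dots,v_k)(r\,\cdot\,)$ pointwise on $\partial^+B^+_r$; comparing with $E = \gamma H$ identifies $c(r) = \gamma/r$, i.e. $\partial_\nu v_i = \frac{\gamma}{r} v_i$ on each such sphere. Integrating this ODE along rays shows $v_i$ is $\gamma$-homogeneous on the corresponding spherical shell. Then $H(r) = H(1) r^{2\gamma}$ and $E(r) = \gamma H(1) r^{2\gamma}$ on the interval; since the right-hand sides are real-analytic in $r$ and $N$ is monotone, a continuation argument (the monotone function $N$ cannot decrease below $\gamma$ for smaller $r$ nor exceed $\gamma$ for larger $r$ without contradicting the already-derived formulae, or alternatively one propagates the homogeneity by analyticity of $L_a$-harmonic functions) gives $N\equiv\gamma$ on all of $(0,\infty)$ and global homogeneity of $\bv$.

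The main obstacle is not the Cauchy–Schwarz step but the careful justification of the boundary integrations by parts in the degenerate weighted setting: one must ensure that the $A_2$-weight $y^a$ does not produce spurious boundary contributions on $\{y=0\}$, that the traces $v_i\partial^a_\nu v_i$ genuinely vanish there in the appropriate weak sense under hypothesis (2), and that $E$, $H$ are indeed absolutely continuous so the differentiation formulae hold for a.e. $r$. Since hypotheses (2) and (3) are given and the analogous computation for $s=1/2$ is carried out in \cite[Theorem 3.3]{tvz1}, the adaptation is largely a matter of bookkeeping with the exponent $a = 1-2s$ in place of $0$; accordingly I would present the differentiation identities, the Cauchy–Schwarz inequality, and the equality-case ODE, and refer to \cite{tvz1} for the regularity technicalities — which is exactly what the paper does by omitting the proof.
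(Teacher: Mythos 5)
Your proposal is correct and follows the standard Almgren-monotonicity route (the differentiation identities for $E$ and $H$, the Pohozaev identity to express $E'$ via normal derivatives, Cauchy--Schwarz for $N'\ge 0$, and the equality case for rigidity), which is precisely the argument of the cited \cite[Theorem 3.3]{tvz1}; the paper itself omits the proof and defers to that reference. One small slip: the boundary-integral identity you state for $E(r)$ should read $E(r) = \frac{1}{r^{N-2s}}\int_{\partial^+ B^+_r} y^a \tsum_i v_i \partial_\nu v_i\,\de{\sigma}$, not $\frac{1}{r^{N+1-2s}}$ --- you in fact use the correct exponent in the subsequent chain leading to \eqref{eqn: logarithmic derivative of H}.
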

Of the many consequences that the validity of an Almgren monotonicity formula carries, at this stage we are mostly interested in the following, which states a rigidity property implied by H\"older continuity.
\begin{corollary}\label{cor: holder homogeneous}
If $\bv$ satisfies the assumptions of Proposition \ref{thm:_Almgren_for_classG_s} and is globally H\"older continuous of exponent $\gamma$ on $\R^{N+1}_+$,
then it is homogeneous of degree $\gamma$ with respect to any of its (possible) zeroes, and thus
\[
\mathcal{Z}:=\{x \in \R^N: \bv(x,0) = 0\}\quad\text{is an affine subspace of }\R^N.
\]
\end{corollary}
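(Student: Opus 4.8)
The plan is to run the Almgren frequency function of Proposition~\ref{thm:_Almgren_for_classG_s} at an arbitrary zero and to exploit the \emph{global} H\"older bound as a two-sided growth constraint — at the origin and at infinity at once. I would first dispose of the trivial case $\bv\equiv0$, and then fix $x_0\in\mathcal{Z}$, writing $X_0=(x_0,0)$. By Proposition~\ref{thm:_Almgren_for_classG_s} the frequency $N(x_0,r)$ is well defined, absolutely continuous and non decreasing on $(0,\infty)$, and $H(x_0,\cdot)$ obeys the logarithmic derivative identity \eqref{eqn: logarithmic derivative of H}.

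The first step is the elementary remark that, since $\bv(X_0)=0$, global H\"older continuity forces $|\bv(X)|\leq[\bv]_{\C^{0,\gamma}}|X-X_0|^\gamma$ for every $X\in\overline{\R^{N+1}_+}$; feeding this into the definition of $H$ and using that $\int_{\partial^+B_r^+(x_0,0)}y^a\,\de\sigma$ is a fixed positive constant times $r^{N+1-2s}$, one gets $H(x_0,r)\leq C r^{2\gamma}$ \emph{for all} $r>0$, with $C$ depending only on $N$, $s$ and $[\bv]_{\C^{0,\gamma}}$.

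The heart of the matter is the second step. Integrating \eqref{eqn: logarithmic derivative of H}, any value $N(x_0,r_0)=\gamma_0$ propagates to $H(x_0,r)\geq H(x_0,r_0)(r/r_0)^{2\gamma_0}$ for $r\geq r_0$ (since $N(x_0,\cdot)\geq\gamma_0$ there by monotonicity) and, likewise, for $r\leq r_0$ (since $N(x_0,\cdot)\leq\gamma_0$ there). If $N(x_0,r_0)>\gamma$ for some $r_0$, comparing the first estimate with $H(x_0,r)\leq C r^{2\gamma}$ and letting $r\to+\infty$ is absurd; if $N(x_0,r_0)<\gamma$ for some $r_0$, comparing the second estimate with the same bound and letting $r\to0^+$ is absurd (here one uses $H(x_0,r_0)>0$, which holds because $\bv\not\equiv0$). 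Hence $N(x_0,r)\equiv\gamma$ on $(0,\infty)$, so by the rigidity clause of Proposition~\ref{thm:_Almgren_for_classG_s} the map $\bv$ is homogeneous of degree $\gamma$ with respect to $X_0$. To finish, given $x_0,x_1\in\mathcal{Z}$, homogeneity about $X_0$ gives $\bv(x_0+\lambda(x_1-x_0),0)=\lambda^\gamma\bv(x_1,0)=0$ for all $\lambda>0$, so the ray from $x_0$ through $x_1$ lies in $\mathcal{Z}$; using homogeneity about $X_1=(x_1,0)$ as well, the entire line through $x_0$ and $x_1$ lies in $\mathcal{Z}$. A nonempty subset of $\R^N$ containing the line through any two of its points is an affine subspace, which gives the last assertion.

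The step I expect to be the most delicate is the non-degeneracy $H(x_0,r)>0$ for every $r>0$, which is needed both to give $N(x_0,\cdot)$ a meaning and to run the $r\to0^+$ contradiction. I would take this as part of what ``well defined'' means in Proposition~\ref{thm:_Almgren_for_classG_s} — it rests on $\bv\not\equiv0$ together with the unique continuation encoded in that statement. Everything else is bookkeeping: the only genuine idea is that \emph{global} H\"older regularity, unlike mere interior regularity, constrains the Almgren frequency from above at infinity \emph{and} from below at the origin, which forces it to be constant and hence $\bv$ to be homogeneous.
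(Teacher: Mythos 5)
Your proposal is correct and follows essentially the same route as the paper: exploit the two-sided growth bound $H(x_0,r)\leq Cr^{2\gamma}$ coming from global H\"older continuity, and play it against the integrated form of \eqref{eqn: logarithmic derivative of H} plus monotonicity of $N(x_0,\cdot)$ to force $N(x_0,\cdot)\equiv\gamma$, then invoke the rigidity clause of Proposition~\ref{thm:_Almgren_for_classG_s}. The only difference is cosmetic: you spell out the elementary final step showing $\mathcal{Z}$ is affine and explicitly flag the non-degeneracy $H(x_0,r)>0$, both of which the paper leaves implicit in the phrase ``well defined.''
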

\begin{proof}
The proof relies on the fact that the Almgren centered at any point of $\mathcal{Z}$ has to be constant and equal to $\gamma$. Indeed letting $x_0 \in \mathcal{Z}$, we argue by contradiction and suppose that $N(x_0, R) > \gamma$ for some $R$. By monotonicity of $N$ we have
\[
    \frac{\mathrm{d}}{\mathrm{d}r} \log H(r) \geq \frac{2}{r} N(x_0, R) \quad \forall r \geq R
\]
and, integrating in $(R,r)$, we find
\[
    C r^{2 N(x_0, R)} \leq H(r) \leq C r^{2\gamma},
\]
a contradiction for $r$ large enough. The same reasoning provides a contradiction in the case $N(x_0,R) < \gamma$ and $r \leq R$.
\end{proof}
\section{Liouville type results}
Relying on the previous monotonicity formulae, in this section we will prove some Liouville type theorems for solution to either equations or systems involving the operator $L_a$. As a first result, we have the following.
\begin{proposition}\label{prp: pre_liouville L_a}
Let $v\in H^{1;a}_\loc\left(\overline{\R^{N+1}_+}\right)$ be continuous and satisfy
\begin{equation*}%\label{eqn: global eigenfunction}
    \begin{cases}
    L_a v = 0 & \text{in } \R^{N+1}_+ \\
    v(x,0)=0       & \text{on } \R^{N},
    \end{cases}
\end{equation*}
and let us suppose that for some $\gamma \in [0,2s)$, $C >0$ it holds
\[
    |v(X)| \leq C(1+|X|^{\gamma})
\]
for every $X$. Then $v$ is identically zero.
\end{proposition}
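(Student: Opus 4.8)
The plan is to exploit the one-phase Alt–Caffarelli–Friedman formula from Proposition \ref{prp: ACF y 2s} to show that the growth bound $|v(X)|\le C(1+|X|^\gamma)$ with $\gamma<2s$ forces the Dirichlet energy density to vanish. First I would note that $v$ satisfies the hypotheses of Proposition \ref{prp: ACF y 2s} on every ball $B_R^+$: indeed $v(x,0)=0$ on $\R^N$ by assumption, and since $L_a v=0$ in $\R^{N+1}_+$ and $v\,\partial_\nu^a v=0$ on $\R^N$ (the trace vanishes), the quantity $\int y^a\nabla v\cdot\nabla(v\phi)$ equals $0\le 0$ for every nonnegative $\phi\in\C_0^\infty(B_R)$. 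Hence
\[
    \Phi(r) := \frac{1}{r^{4s}} \int_{B_r^+} y^a \frac{|\nabla v|^2}{|X|^{N-2s}}\,\de{x}\de{y}
\]
is monotone non decreasing on $(0,\infty)$ (the formula holds for all $R$, so on the whole half-line).

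Next I would estimate $\Phi(r)$ from above using the growth bound, and show it tends to $0$ as $r\to\infty$. The key inequality is \eqref{eqn: estim from above denom}, which bounds $\int_{B_r^+} y^a |\nabla v|^2/|X|^{N-2s}$ by boundary terms of the form $r^{-(N-2s)}\int_{\partial^+B_r^+} y^a v\,\partial_\nu v\,\de\sigma$ and $r^{-(N+1-2s)}\int_{\partial^+B_r^+}y^a v^2\,\de\sigma$. Using $|v|\le C(1+r^\gamma)$ on $\partial^+B_r^+$, the second term is $O(r^{-(N+1-2s)}\cdot r^{N}\cdot r^{2\gamma}) = O(r^{2\gamma - 1 + 2s - 2s}) = O(r^{2\gamma - 1 +2s \cdot 0})$ — more carefully, $\int_{\partial^+B_r^+}y^a\,\de\sigma \sim r^{N+a} = r^{N+1-2s}$, so the second term is $O(r^{2\gamma})$; controlling the first term requires a gradient bound, obtained either by interior elliptic estimates for $L_a$ (Caffarelli–Silvestre / Fabes–Kenig–Serapioni theory) rescaled on annuli, giving $\|\nabla v\|_{L^2(y^a;\,B_{2r}^+\setminus B_r^+)}\lesssim r^{-1}\|v\|_{L^2(y^a;\,B_{3r}^+\setminus B_{r/2}^+)} = O(r^{(N+1-2s)/2 - 1 + \gamma})$, so that by Cauchy–Schwarz the first boundary term is also $O(r^{2\gamma})$ after averaging over a dyadic range of radii. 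Either way one gets, for a sequence $r_j\to\infty$ (or all large $r$),
\[
    \int_{B_r^+} y^a \frac{|\nabla v|^2}{|X|^{N-2s}}\,\de{x}\de{y} \le C r^{2\gamma},
\]
hence $\Phi(r) \le C r^{2\gamma - 4s} \to 0$ since $\gamma < 2s \le 4s$. Wait — one must be slightly careful: $2\gamma-4s<0$ requires $\gamma<2s$, which is exactly the hypothesis, so indeed $\Phi(r_j)\to 0$.

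Combining the two facts: $\Phi$ is monotone non decreasing and has a subsequence tending to $0$, so $\Phi\equiv 0$, which forces $\int_{B_r^+}y^a|\nabla v|^2/|X|^{N-2s} = 0$ for every $r$, i.e. $\nabla v\equiv 0$ in $\R^{N+1}_+$. Thus $v$ is constant, and since its trace on $\R^N$ is $0$ and $v$ is continuous up to $y=0$, that constant is $0$. I expect the main obstacle to be the estimate of the first boundary term $r^{-(N-2s)}\int_{\partial^+B_r^+} y^a v\,\partial_\nu v\,\de\sigma$: pointwise the normal derivative on a fixed sphere is not controlled by the sup bound alone, so one needs either to integrate the ACF inequality over a range of radii (turning the boundary integral into a solid integral one can bound by Cauchy–Schwarz against $\Phi$ itself, yielding a Grönwall-type differential inequality for $\Phi$) or to invoke the De Giorgi–Nash–Moser / Schauder theory for the degenerate operator $L_a$ to upgrade the $L^\infty$ growth bound to a gradient growth bound on annuli. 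The cleanest route is probably the former: write the logarithmic derivative identity for $\Phi$ as in the proof of Proposition \ref{prp: ACF y 2s}, obtaining $\Phi'(r)/\Phi(r) \ge (4s/r)(\gamma(\lambda_1^s(\emptyset))/(2s) - 1) = 0$ plus a remainder, and then note that if $\Phi$ were not identically zero the inequality $\Phi'\ge 0$ together with positivity would contradict $\Phi(r)\le Cr^{2\gamma-4s}\to0$; this needs only the crude energy bound $\int_{B_r^+}y^a|\nabla v|^2 = O(r^{N-2s+2\gamma})$, which follows from a standard Caccioppoli inequality on $B_{2r}^+$ using $|v|\le Cr^\gamma$, with no gradient-pointwise estimate required.
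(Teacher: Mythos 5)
Your strategy is essentially the same as the paper's: verify that $v$ satisfies the hypotheses of Proposition~\ref{prp: ACF y 2s}, use the growth bound to show $\Phi(r)\lesssim r^{2(\gamma-2s)}\to 0$, and conclude from monotonicity that $\Phi\equiv 0$, hence $v$ constant, hence zero. The paper sidesteps the boundary-gradient-term difficulty that you flag with inequality~\eqref{eqn: estim from above denom}: instead of estimating the boundary terms on $\partial^+B_r^+$, it tests the equation directly with $\Gamma_1^s\,v\,\eta$ where $\eta$ is a radial cutoff vanishing on $\partial^+B_{2r}^+$ and equal to $1$ on $B_r^+$, so only solid integrals over the annulus $B_{2r}^+\setminus B_r^+$ appear, and these are controlled by the pointwise bound $|v|\le C r^\gamma$ alone. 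This is exactly the ``Caccioppoli''-type route you single out as the cleanest, but executed in one step with the ACF kernel already built in, so no dyadic reassembly of $\int y^a|\nabla v|^2/|X|^{N-2s}$ from annular Caccioppoli estimates is needed (that step is the one you leave implicit). Your final observation — that the constant must be zero by the continuity of $v$ up to $\{y=0\}$ and the vanishing trace — is correct and in fact slightly more explicit than the paper's closing sentence.
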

\begin{proof}
We remark that $v$ satisfies the assumptions of Proposition \ref{prp: ACF y 2s} for any $R$.
For $r>0$ large enough, we choose $\eta$ non negative, smooth and radial
cut-off function supported in  $B_{2r}^+$ with $\eta = 1$ in $B_r^+$ such that
\[
    \int_{\R^{N+1}_+} y^a |\nabla \eta| \leq C r^{N+1-2s}, \quad \int_{\R^{N+1}_+} |L_a \eta| \leq C r^{N-2s}
\]
(for instance, we can take $\eta$ as a smooth approximation of the function $\frac{1}{r}( 2r - |X|)$ in $B_{2r} \setminus B_r$). Moreover, let $\Gamma_1^s$ be defined as in Definition
\ref{def:Gamma_1} (in particular, it is radial and superharmonic).
Testing the equation for $v$ with ${\Gamma_1^s} v\eta$ we obtain
\[
    \int\limits_{ B_{2r}^+ } y^a |\nabla v|^2  {\Gamma_1^s} \eta \de{x} \de{y}
    \leq \int\limits_{ B_{2r}^+ \setminus B_{r}^+} \frac{1}{2} v^2
\left[ -L_a \eta {\Gamma_1^s}  + 2 y^a \nabla \eta \cdot \nabla {\Gamma_1^s} \right]
\de{x} \de{y},
\]
where we used that $\eta$ is constant in $B_r^+$. Since
$\Gamma_1^s(X)=|X|^{2s-N}$ outside $B_1$, and $|v(X)| \leq C r^{\gamma}$ outside
a suitable $B_{\bar r}$, using the notations of Proposition \ref{prp: ACF y 2s} we infer
\[
\Phi(r) = \frac{1}{r^{4s}}\left(\int\limits_{B_r^+} y^a |\nabla v|^2 {\Gamma_1^s}
\,\de{x} \de{y} \right)\leq \frac{1}{r^{4s}} \cdot C r^{2\gamma},
\]
with $C$ independent of $r>\bar r$. Due to the monotonicity of $\Phi$, we then find
\begin{equation*}
0 \leq  \Phi(\bar{r}) \leq C r^{2(\gamma-2s)}.
\end{equation*}
for every $r>\bar r$ sufficiently large. This forces $v$ to be constant.
\end{proof}
The previous proposition allows to prove an analogous result of the classical Liouville Theorem,
which holds for $L_a$-harmonic functions.
\begin{proposition}\label{prp: liouville L_a}
Let $v$ be an entire $L_a$-harmonic function defined on $\R^{N+1}$. If there exists $\gamma < 1$ such that
\[
    |v(X)| \leq C\left(1 + |X|^{\gamma}\right),
\]
then $v|_{y=0}$ is constant. Moreover, if $\gamma < \min(2s,1)$, then $v$ is constant.
\end{proposition}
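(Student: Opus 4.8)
The plan is to deduce Proposition \ref{prp: liouville L_a} from Proposition \ref{prp: pre_liouville L_a} by a reflection argument together with an observation about the trace. First I would exploit the even reflection: since $v$ is $L_a$-harmonic on all of $\R^{N+1}$ with $a=1-2s$, the natural reflection across $\{y=0\}$ is the even one, $\tilde v(x,-y):=v(x,y)$, which produces a function that is $L_a$-harmonic on $\R^{N+1}\setminus\{y=0\}$ and, because $\partial^a_\nu v$ exists and the weight is $A_2$, actually $L_a$-harmonic across $\{y=0\}$ in the weak sense precisely when $\partial^a_\nu v(x,0)=0$. Since no equation is imposed on the trace here, what one really uses is that an entire $L_a$-harmonic function with $|y|^a$-weighted energy automatically has vanishing weighted normal derivative on $\{y=0\}$ — this is the content of the mean-value/regularity theory for $L_a$ (cf. the Caffarelli--Silvestre extension and the remark in the excerpt that radial $\Gamma^s_\eps$ have $\partial^a_\nu=0$). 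So the even reflection $\tilde v$ is a genuine entire solution of $L_a\tilde v=0$ on $\R^{N+1}$, with the same growth bound.

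Next, to handle the trace I would introduce $w(x,y):=v(x,y)-\tfrac12\bigl(v(x,y)+v(x,-y)\bigr)=\tfrac12\bigl(v(x,y)-v(x,-y)\bigr)$, the odd part in $y$. This $w$ is again entire $L_a$-harmonic (both even and odd parts of an $L_a$-harmonic function are $L_a$-harmonic, since $L_a$ commutes with $y\mapsto -y$), and crucially $w(x,0)=0$ for all $x\in\R^N$. It still satisfies $|w(X)|\le C(1+|X|^\gamma)$. Now Proposition \ref{prp: pre_liouville L_a} applies verbatim to $w$ restricted to $\R^{N+1}_+$ — but that proposition requires $\gamma<2s$, whereas here we only assume $\gamma<1$. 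This is where the two cases split. If $\gamma<\min(2s,1)$, then directly $w\equiv 0$, so $v$ is even in $y$; combined with the first step ($\partial^a_\nu v=0$), classical Liouville for the degenerate operator $L_a$ with a Muckenhoupt weight (Harnack/mean value, or the Almgren formula of Proposition \ref{thm:_Almgren_for_classG_s} forcing constant frequency) yields that $v$ is constant. If instead $2s\le\gamma<1$ (only possible when $s\le 1/2$), Proposition \ref{prp: pre_liouville L_a} no longer gives $w\equiv0$, but one can still argue that $w$ is a polynomial-type solution: a growth bound $|X|^\gamma$ with $\gamma<1$ and vanishing trace forces, via the monotonicity of $\Phi$ in Proposition \ref{prp: ACF y 2s} applied to $w$, that the Almgren frequency at infinity is $<1$, hence $\le 2s$ among admissible homogeneities vanishing on $\R^N$, and the only such with exponent $<1$ and $\le 2s$ — recalling $\gamma(\lambda_1^s(\emptyset))=2s$ — is the zero function when $2s>\gamma$, contradiction, or the homogeneity-$2s$ profile $y^{2s}$ when $\gamma$ could reach $2s$; but $y^{2s}$ has nonzero $\partial^a_\nu$ or is excluded by strict inequality, so again $w\equiv0$. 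In all cases $v$ is even in $y$, reducing the trace to the full function.

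With $v$ even in $y$ (equivalently $\partial^a_\nu v=0$), the second assertion follows: consider $u(X):=v(X)-v(0)$ and apply the one-phase theory. Either invoke the Almgren frequency function $N(0,r)$ from Proposition \ref{thm:_Almgren_for_classG_s} — the hypotheses $L_a u=0$, $u\partial^a_\nu u=0$ (since $\partial^a_\nu u\equiv0$), and the Pohozaev identity (standard for $L_a$-harmonic functions) are met — and observe that the growth bound $|u(X)|\le C(1+|X|^\gamma)$ together with \eqref{eqn: logarithmic derivative of H} forces $H(r)\le C r^{2\gamma}$, hence $N(0,r)\le\gamma<\min(2s,1)$ for all $r$; monotonicity then pins $N$ to a constant $\le\gamma$, so $u$ is homogeneous of that degree, and the only homogeneity of an $L_a$-harmonic function with $\partial^a_\nu=0$ and degree $<2s$ (the first such nonzero degree being $2s$, attained by $y^{2s}$, and the first nontrivial trace-nonconstant one being degree $1$, attained by $x_1$) is degree $0$; thus $u\equiv 0$.

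The main obstacle I anticipate is the case $2s\le\gamma<1$ in establishing that the odd part $w$ vanishes, since Proposition \ref{prp: pre_liouville L_a} is stated only for $\gamma<2s$ and does not cover it; pushing past this gap requires either a sharper Liouville statement for $L_a$-harmonic functions vanishing on $\R^N$ (classifying all homogeneity exponents of such profiles and noting the smallest positive one is exactly $2s$), or a direct argument that $\Phi(r)r^{4s}/r^{2\gamma}$ bounded plus monotonicity of $\Phi$ still forces $\nabla w\equiv0$ when $\gamma<1\le 2s$ is false — i.e. precisely when $s\le 1/2$. A clean way around it is to note that for $s\le 1/2$ the set $\R^N$ has positive $L_a$-capacity-complement structure so that any $L_a$-harmonic $w$ with $w|_{y=0}=0$ and subunit growth must be identically zero by the capacitary argument alluded to in the introduction; combined with the $s>1/2$ case (where $\gamma<1\le 2s$ and Proposition \ref{prp: pre_liouville L_a} applies directly), this covers all $s\in(0,1)$.
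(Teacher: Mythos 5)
Your proof has several genuine gaps, the most serious being that the first assertion — that $v|_{y=0}$ is constant whenever $\gamma<1$ — is never actually established. You argue (in some cases) that $v$ is even in $y$, but evenness does not imply a constant trace; a further step is required and is missing. The paper's proof obtains the first assertion directly and cleanly from the mean value property of $L_a$-harmonic functions (cf.\ \cite{css}): writing $v(x,0)$ as a weighted solid average over $B_R(x,0)$ and comparing two base points $x',x''$ via the symmetric difference of balls gives $|v(x',0)-v(x'',0)|\le C R^{\gamma-1}\to 0$. Nothing about parity or the conormal derivative is needed for this step.

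Beyond that, the claim in your first paragraph — that an entire $L_a$-harmonic function on $\R^{N+1}$ automatically has $\partial^a_\nu v=0$ on $\{y=0\}$ — is false. A counterexample is $v(x,y)=\sgn(y)\,|y|^{2s}$, which is odd in $y$, vanishes on $\{y=0\}$, is weakly $L_a$-harmonic on all of $\R^{N+1}$ (the two one-sided contributions cancel because $\partial_y v$ is even), and yet has $\partial^a_\nu v\equiv -2s\neq 0$. The weak equation only forces $\partial^a_\nu v=0$ \emph{after} you know $v$ is even; invoking it beforehand is circular. The same example shows that your claim ``in all cases $v$ is even in $y$'' is wrong: when $s\le 1/2$ and $2s\le\gamma<1$, the odd part $w$ can be a nonzero multiple of $\sgn(y)|y|^{2s}$, which satisfies the growth bound with equality. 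Your sketch for this case (asserting the only admissible homogeneity $\le 2s$ is the zero function) overlooks exactly this profile and therefore does not close.

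For the second assertion the situation is better, but your route is needlessly heavy. Once $\gamma<\min(2s,1)$ you correctly get $w\equiv 0$ via Proposition \ref{prp: pre_liouville L_a} and hence $v$ even with $\partial^a_\nu v=0$; but then you invoke the Almgren machinery to conclude. The paper's proof is one line shorter and requires less: having already established that $v|_{y=0}$ is constant from the mean value argument, one simply subtracts that constant and applies Proposition \ref{prp: pre_liouville L_a} to the resulting function (whose trace vanishes identically), which kills it outright. I would recommend replacing the parity/conormal discussion with the mean-value estimate for the first assertion and using it as a stepping stone into Proposition \ref{prp: pre_liouville L_a} for the second, as the paper does.
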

\begin{proof}
It is well known (see \cite{css}) that $L_a$-harmonic functions enjoy the mean value property ($C > 0$)
\[
    v(x,0) = \frac{C}{r^{N+a}} \int\limits_{\partial B_r(x,0)} |y|^a v \, \de \sigma
\]
and, equivalently
\[
    v(x,0) = \frac{C}{R^{N+1+a}} \int\limits_{B_R(x,0)} |y|^a v \, \de \sigma.
\]
It follows, by the growth condition, that
\[
    \begin{split}
        |v(x',0)-v(x'',0)| &\leq \frac{C}{R^{N+1+a}} \int_{B_R(x',0) \triangle B_R(x'',0) } y^a |v(x,y)| \, \de x \de y \\
            &\leq \frac{C}{R^{N+1+a}} \int_{B_R(x',0) \triangle B_R(x'',0) } y^a |X|^{\gamma} \, \de x \de y \leq C R^{\gamma-1}
    \end{split}
\]
and the first conclusion follows since $\gamma < 1$. Let us now assume $\gamma < \min(2s,1)$: since $v|_{y=0}$ is constant, we can assume $v|_{y=0} \equiv 0$ and apply Proposition \ref{prp: pre_liouville L_a}.
\end{proof}
We can obtain the analogous of the classical Liouville Theorem for $s$-harmonic functions by
applying the previous result to the even reflection through $\{y=0\}$ of their $L_a$-harmonic extensions.
\begin{corollary}\label{cor: liouville L_a}
Let $v\in H^{1;a}_\loc\left(\overline{\R^{N+1}_+}\right)$ be continuous and satisfy
\begin{equation*}%\label{eqn: global eigenfunction}
    \begin{cases}
    L_a v = 0 & \text{in } \R^{N+1}_+ \\
    \partial_\nu^a v(x,0)=0       & \text{on } \R^{N},
    \end{cases}
\end{equation*}
and let us suppose that for some $\gamma < \min(2s,1)$, $C >0$ it holds
\[
    |v(X)| \leq C(1+|X|^{\gamma})
\]
for every $X$. Then $v$ is constant.
\end{corollary}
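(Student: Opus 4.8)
The plan is to reduce Corollary \ref{cor: liouville L_a} to Proposition \ref{prp: liouville L_a} by the standard even reflection trick. Given $v$ on $\overline{\R^{N+1}_+}$ satisfying $L_a v = 0$ in $\R^{N+1}_+$ and the homogeneous Neumann-type condition $\partial_\nu^a v(x,0) = 0$ on $\R^N$, I would define $\tilde v$ on all of $\R^{N+1}$ by $\tilde v(x,y) := v(x,|y|)$. The first step is to check that $\tilde v$ is an \emph{entire} $L_a$-harmonic function in the sense required by Proposition \ref{prp: liouville L_a}, where now the weight is $|y|^a$ (even in $y$, hence unambiguously defined across $\{y=0\}$). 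This is where the condition $\partial_\nu^a v(x,0) = 0$ is used: integrating against a test function $\phi \in \C_0^\infty(\R^{N+1})$ and splitting the integral into $\{y>0\}$ and $\{y<0\}$, the two boundary contributions from the upper and lower half-spaces are each equal to $\int_{\R^N} (\partial_\nu^a v)\,\phi\,\de x = 0$, so they cancel (rather than doubling), leaving $\int_{\R^{N+1}} |y|^a \nabla \tilde v \cdot \nabla \phi = 0$. One should note that $\tilde v \in H^{1;a}_\loc(\R^{N+1})$ because $v \in H^{1;a}_\loc(\overline{\R^{N+1}_+})$ and the reflection preserves the weighted Sobolev norm (the weight $|y|^a$ being an $A_2$ weight, so there is no loss of regularity across the hyperplane once the weak Neumann condition holds).

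Once $\tilde v$ is known to be entire $L_a$-harmonic, the growth bound transfers verbatim: for $X=(x,y)$ with $y<0$ we have $|\tilde v(X)| = |v(x,|y|)| = |v(x,-y)| \leq C(1+|(x,-y)|^\gamma) = C(1+|X|^\gamma)$, and for $y \geq 0$ it is the original bound. So $\tilde v$ satisfies $|\tilde v(X)| \leq C(1+|X|^\gamma)$ with the same $\gamma < \min(2s,1)$. Applying Proposition \ref{prp: liouville L_a} to $\tilde v$ (the hypothesis $\gamma < \min(2s,1)$ is exactly its second, stronger hypothesis) yields that $\tilde v$ is constant, hence $v = \tilde v|_{\overline{\R^{N+1}_+}}$ is constant. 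That concludes the proof.

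The only genuinely delicate point is the justification that the even reflection produces an honest weak solution across $\{y=0\}$ — i.e.\ that no singular measure supported on the hyperplane is created. This is precisely the content of the weak Neumann condition $v\partial_\nu^a v = 0$ read in the stronger form $\partial_\nu^a v = 0$ (which is what the corollary assumes, as opposed to the weaker inequality appearing in the ACF propositions), combined with the fact that $|y|^a$ is a Muckenhoupt $A_2$ weight so that the trace and extension operators behave as in the classical ($a=0$) case. In fact this reflection principle for $L_a$ with homogeneous weighted Neumann data is well documented (e.g.\ in \cite{css,cs}), so I would simply cite it rather than reprove it. With that in hand, everything else is bookkeeping: the reflection preserves membership in $H^{1;a}_\loc$, it preserves the pointwise growth bound, and Proposition \ref{prp: liouville L_a} does the rest.
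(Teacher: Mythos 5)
Your proposal is correct and is essentially the paper's proof: the paper itself dispenses with the corollary in one sentence, saying it follows by applying Proposition~\ref{prp: liouville L_a} to the even reflection of $v$ across $\{y=0\}$, which is exactly what you do. One small correction to your reasoning: when you split $\int_{\R^{N+1}} |y|^a\nabla\tilde v\cdot\nabla\phi$ into the two half-spaces, the boundary contributions from above and below do not cancel --- because $\partial_y\tilde v(x,y)=-\partial_y v(x,-y)$ for $y<0$, the two terms come out with the \emph{same} sign and would \emph{double} a nonzero $\partial_\nu^a v$ into a measure on $\{y=0\}$; the weak equation across the hyperplane holds here precisely because each term is individually zero. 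This does not affect the validity of your argument, only the parenthetical ``(rather than doubling).''
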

By the way, a stronger result in the direction of the above corollary is contained in \cite[Lemma 2.7]{css}.

In the same spirit of Proposition \ref{prp: pre_liouville L_a}, we provide a result
concerning $L_a$-harmonic functions which vanish on a half space of $\R^N$.
\begin{proposition}\label{prp: liouville_boundary}
Let $v\in H^{1;a}_\loc\left(\overline{\R^{N+1}_+}\right)$ satisfy the assumptions of Proposition \ref{prp: ACF sym}. Let us suppose that for some $\gamma \in [0,s)$, $C >0$ it holds
\[
    |v(X)| \leq C(1+|X|^{\gamma})
\]
for every $X$. Then $v$ is identically zero.
\end{proposition}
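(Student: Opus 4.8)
The plan is to reproduce the argument of Proposition~\ref{prp: pre_liouville L_a}, using the one–phase monotonicity formula of Proposition~\ref{prp: ACF sym} in place of the one of Proposition~\ref{prp: ACF y 2s}; only the bookkeeping of the powers of $r$ changes. Since $v$ satisfies the hypotheses of Proposition~\ref{prp: ACF sym} on every ball $B_R^+$, the function
\[
\Phi(r) := \frac{1}{r^{2s}} \int\limits_{B_r^+} y^a \frac{|\nabla v|^2}{|X|^{N-2s}} \, \de{x} \de{y}
\]
is well defined (exactly as in Lemma~\ref{lem: phi is well defined}, whose proof only uses the energy inequality) and monotone non decreasing on all of $(0,\infty)$. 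Hence it is enough to prove that $\Phi(r)\to0$ as $r\to\infty$: then $\Phi\equiv0$, so $\nabla v=0$ almost everywhere in $\R^{N+1}_+$, and since $v$ is continuous and vanishes on $\{x_1\le0\}\times\{0\}$ we conclude $v\equiv0$.

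To bound $\Phi$ from above I would repeat the cut-off computation in the proof of Proposition~\ref{prp: pre_liouville L_a}. Fix $r$ large and let $\eta\in\C_0^{\infty}(B_{2r})$ be a non negative radial cut-off with $\eta\equiv1$ on $B_r$ and $\int y^a|\nabla\eta|\le Cr^{N+1-2s}$, $\int|L_a\eta|\le Cr^{N-2s}$. Plugging the non negative function $\phi=\Gamma_1^s\eta$ (an admissible competitor by density) into the energy inequality of Proposition~\ref{prp: ACF sym}, using \eqref{eqn: acf_per_moduli}, integrating by parts, and discarding the favourable term $-\tfrac{1}{2}\int v^2\eta\,L_a\Gamma_1^s\le0$ (recall that $\Gamma_1^s$ is radial and $L_a$-superharmonic, so $\partial_\nu^a\Gamma_1^s=0$ on $\R^N$), one obtains
\[
\int\limits_{B_{2r}^+} y^a |\nabla v|^2 \Gamma_1^s \eta \, \de{x} \de{y} \leq \int\limits_{B_{2r}^+ \setminus B_r^+} \frac{1}{2} v^2 \left[ -L_a \eta \, \Gamma_1^s + 2 y^a \nabla \eta \cdot \nabla \Gamma_1^s \right] \de{x} \de{y}.
\]
On the annulus $B_{2r}\setminus B_r$ one has $v^2\le Cr^{2\gamma}$ for $r$ beyond a fixed $\bar r$ (by the growth hypothesis), while $\Gamma_1^s=|X|^{2s-N}\le Cr^{2s-N}$ and $|\nabla\Gamma_1^s|\le Cr^{2s-N-1}$, so the right-hand side is at most $Cr^{2\gamma}\bigl(r^{2s-N}r^{N-2s}+r^{2s-N-1}r^{N+1-2s}\bigr)=Cr^{2\gamma}$, with $C$ independent of $r$. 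Splitting off the finite integral over $B_1^+$ and using $\Gamma_1^s(X)=|X|^{2s-N}$ for $|X|\ge1$, this yields $\int_{B_r^+} y^a|\nabla v|^2/|X|^{N-2s}\le C(1+r^{2\gamma})$, hence
\[
\Phi(r) \leq C r^{2\gamma - 2s} \to 0 \quad \text{as } r \to \infty,
\]
since $\gamma<s$. This completes the argument.

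I do not expect a genuine obstacle here beyond keeping track of exponents: the structural point — that $\phi=\Gamma_1^s\eta\ge0$, so it is admissible in the one-sided energy inequality even though $v$ now vanishes only on the half-space $\{x_1\le0\}$ rather than on all of $\R^N$ — is already encoded in the statement of Proposition~\ref{prp: ACF sym}, and the replacement of $r^{4s}$ (as in Proposition~\ref{prp: pre_liouville L_a}) by $r^{2s}$ in the definition of $\Phi$ is exactly what matches the sharper growth hypothesis $\gamma<s$ in place of $\gamma<2s$. The only care needed is to verify that the constants in the annular estimates are uniform in $r$ and that passing from the $\Gamma_1^s$-weighted energy to the $|X|^{2s-N}$-weighted one costs at most an additive constant, controlled just as in Lemma~\ref{lem: phi is well defined}.
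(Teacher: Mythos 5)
Your proof is correct and follows essentially the same route as the paper: it repeats verbatim the cut-off computation of Proposition~\ref{prp: pre_liouville L_a} (which the paper invokes implicitly, via the chain of references Proposition~\ref{prp: liouville_boundary} $\to$ Proposition~\ref{prp: liouville system} $\to$ Proposition~\ref{prp: pre_liouville L_a}), replacing the one-phase monotonicity formula of Proposition~\ref{prp: ACF y 2s} by that of Proposition~\ref{prp: ACF sym}, so that the normalization $r^{-4s}$ becomes $r^{-2s}$ and the threshold $\gamma<2s$ becomes $\gamma<s$. The bookkeeping of exponents, the use of the monotonicity to deduce $\Phi\equiv0$, and the final appeal to the vanishing of $v$ on $\{x_1\le0\}$ to upgrade ``constant'' to ``identically zero'' are all as in the paper.
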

\begin{proof}
Again, $v$ as above fulfills the assumptions of
Proposition \ref{prp: ACF sym}. Now, assuming that $v$ is not constant, we can argue
as in the proof of Proposition \ref{prp: liouville system} obtaining a contradiction.
\end{proof}

We proceed with a lemma regarding the decay of subsolutions to a linear equation involving $L_a$.
\begin{lemma}\label{lem: decay with perturbations}
Let $M>0$ and $\delta>0$ be fixed and let $h \in L^{\infty}(\partial^0B_1^+)$ with $\|h\|_{L^{\infty}} \leq \delta$. Any $v \in H^{1;a}(B_1^+)$ non negative solution to
\[
    \begin{cases}
    L_a v \leq 0            &\text{in $B_1^+$}\\
    \partial_{\nu}^a v \leq -M v + h  &\text{on $\partial^0 B_1^+$}
    \end{cases}
\]
verifies
\[
    \sup_{\partial^0 B_{1/2}^+} \leq \frac{1+\delta}{M} \sup_{\partial^+B^+_1} v.
\]
\end{lemma}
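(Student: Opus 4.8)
The plan is to combine a comparison principle for $L_a$ with an explicit radial supersolution. First I would build the barrier: since the relevant operator is $L_a$ together with the Robin-type condition $\partial_\nu^a v \leq -Mv + h$ on $\partial^0 B_1^+$, I want a radial function $w(X)=\psi(|X|)$ on $B_1^+$ that is $L_a$-superharmonic in $B_1^+$ (so $L_a w \geq 0$), satisfies $\partial_\nu^a w \geq -Mw + h$ on the flat part (possible because $\partial_\nu^a w = 0$ for any radial $w$, being an even reflection, so I just need $0 \geq -Mw$, i.e. $w>0$ on $\partial^0 B_{1}^+$), dominates $v$ on the spherical cap $\partial^+ B_1^+$, and is uniformly bounded above on $\overline{B^+_{1/2}}$ by roughly $(1+\delta)/M$ times its sup on $\partial^+ B_1^+$. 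The candidate is simply (a constant multiple of) $\Gamma_1^s$ from Definition \ref{def:Gamma_1}, or more crudely a constant: the key tension is that $w$ must be at least $\sup_{\partial^+ B^+_1} v$ on $\partial^+B^+_1$, but also must be forced down to order $1/M$ near the origin. A pure constant cannot do both, so one needs the genuine decay of the fundamental-solution-type profile $\Gamma_1^s$, whose values at $|X|=1/2$ are a fixed factor smaller than at $|X|=1$; multiplying by the right constant and adding a small correction handling the $h$ term (of size $\le\delta$) gives the factor $(1+\delta)/M$.

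The second step is the comparison itself. I would test the weak formulation of $L_a(v-w)\leq 0$ in $B_1^+$ against the positive part $(v-w)^+$ (extended by zero where $v\leq w$), noting that $(v-w)^+$ vanishes on $\partial^+B^+_1$ by the boundary ordering; the weighted Dirichlet integral of $(v-w)^+$ is then controlled by the boundary term on $\partial^0B^+_1$, which by the Robin conditions for $v$ and $w$ has the favorable sign $-M\int ((v-w)^+)^2 \le 0$ plus the $h$-contribution already absorbed into $w$. This forces $(v-w)^+\equiv 0$, i.e. $v\leq w$ throughout $B_1^+$, hence on $\partial^0B^+_{1/2}$, hence $\sup_{\partial^0 B^+_{1/2}} v \leq \sup_{\partial^0 B^+_{1/2}} w \leq \frac{1+\delta}{M}\sup_{\partial^+B^+_1}v$.

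\textbf{Main obstacle.} The routine parts are the weighted integration by parts (valid because $y^a$ is an $A_2$-Muckenhoupt weight, as recalled in the introduction) and the sign bookkeeping in the boundary term. The genuinely delicate point is the quantitative construction of the barrier: one must choose the multiplicative constant and the shape of $w$ so that simultaneously $w\geq v$ on $\partial^+B^+_1$ (which needs $w$ at least $\sup_{\partial^+B^+_1}v$ there), $w>0$ on $\partial^0B^+_1$ (automatic if $w>0$), and $\sup_{\partial^0B^+_{1/2}}w \leq \frac{1+\delta}{M}\sup_{\partial^+B^+_1}v$. This is exactly where $M$ large enters: for $M$ not large the inequality is vacuous or false, so the statement is really an assertion for $M\gg1$, and the proof must exhibit a radial $L_a$-superharmonic profile whose ratio $\sup_{B^+_{1/2}}w / \inf_{\partial^+B^+_1}w$ is a fixed dimensional constant, then rescale by $1/M$ up to the small error $\delta$ coming from $\|h\|_{L^\infty}$. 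I would handle the $h$ term by splitting $v = v_1 + v_2$, where $v_1$ solves the homogeneous Robin problem and $v_2$ absorbs $h$ via $|v_2|\le \delta/M$ on $\partial^0 B^+_{1/2}$ (again by comparison with a constant), which yields the claimed constant $(1+\delta)/M$ cleanly.
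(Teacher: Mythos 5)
Your comparison framework is the right one, and your observation that a constant barrier cannot work is correct. But the proposed remedy — a fixed radial $L_a$-superharmonic profile such as $\Gamma_1^s$, rescaled by a constant — cannot produce the $1/M$ factor, and this is not a technical nuisance but a structural obstruction. If $w$ is radial then indeed $\partial_\nu^a w = 0$ on $\partial^0 B_1^+$, so the Robin condition that $w$ must satisfy degenerates to the trivial inequality $0 \geq -Mw$, and the parameter $M$ disappears entirely from the barrier. The comparison argument then yields only $v \leq w$ on $B_1^+$, so the best bound you can get on $\partial^0 B_{1/2}^+$ is $\sup_{\partial^0 B_{1/2}^+} w$, and for a fixed radial $L_a$-superharmonic $w$ with $w \geq \sup_{\partial^+ B_1^+} v$ on $\partial^+ B_1^+$, the ratio $\sup_{\partial^0 B_{1/2}^+} w \big/ \inf_{\partial^+ B_1^+} w$ is a fixed Harnack-type constant depending only on $N$ and $s$ — it does not tend to $0$ as $M \to \infty$. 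Multiplying $w$ by any positive constant leaves this ratio unchanged, so the "rescale by $1/M$" step in your plan fails: dividing the barrier by $M$ also divides its boundary values on $\partial^+ B_1^+$ by $M$, and then $w$ no longer dominates $v$ there.

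The $1/M$ decay in the statement has to be \emph{generated} by the Robin term $-Mv$, and therefore the barrier must itself be a genuine Robin supersolution in a way that depends on $M$. This is the content of the technical lemma preceding the one you are proving: the one-dimensional profile $f(x) = C\int_{-\infty}^{x}(1+t^2)^{a/2-1}\,\de t$ satisfies $(-\Delta)^s f \geq -cf$ on the half-line. Its scaling $f_M(x) := f(M^{1/2s}x)$ then satisfies $(-\Delta)^s f_M \geq -cMf_M$, and the non-radial function $g_M(x) := f_M(x-1) + f_M(-x-1)$ is the correct supersolution: it is a genuine Robin supersolution on $(-1,1)$, it stays $\geq 1/2$ outside $(-1,1)$, and — this is the crucial point you cannot get from a fixed radial profile — it drops to size $O(M^{-1})$ on $(-1/2,1/2)$ because the transition layer of $f_M$ has width $O(M^{-1/2s})$. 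The final barrier in the paper is $w_\delta := \delta/M + \int_{\R} P_a(\xi,y)\,g_M(x-\xi)\,\de\xi$, which absorbs the $h$-term additively (as in the last part of your plan) and gives the $(1+\delta)/M$ constant. Your weak-formulation comparison step with $(v-w)^+$ then goes through as you describe; what was missing is the $M$-dependent, non-radial construction of $w$.
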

The proof of Lemma \ref{lem: decay with perturbations} follows by a comparison argument. In order to construct an appropriate supersolution, we need a technical lemma. Let $f \in AC(\R) \cap \C^{\infty}(\R)$ be defined as
\[
    f(x) = C\int_{-\infty}^{x} \frac{1}{(1 + t^2)^{1-a/2}} \de t,
\]
where $C$ is such that $f(+\infty) = 1$.
\begin{lemma}
There exists $c > 0$ such that
\[
    (-\Delta)^s f (x) \geq -c f(x)
\]
for any $x < 0$.
\end{lemma}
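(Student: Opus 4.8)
The plan is to exhibit $f$, after even reflection through $\{y=0\}$, as the trace of an explicit $L_a$-harmonic (or $L_a$-superharmonic) function on $\R^{N+1}_+$ and then estimate its normal $a$-derivative. Recall that the extension of $f$ depending only on $x_1$ solves $L_a V = 0$ in the upper half plane $\{(x_1,y): y>0\}$ with $V(x_1,0)=f(x_1)$. A convenient candidate is to start from the one-dimensional Poisson kernel for $L_a$: since $(-\Delta)^s$ on $\R$ is realized by the Dirichlet-to-Neumann map of $L_a$, and $f'(x_1) = C(1+x_1^2)^{a/2-1}$ is (a constant multiple of) the trace of the radial $L_a$-harmonic function $(x_1^2+y^2)^{(a-1)/2}$-type profile restricted appropriately, one gets a closed form for $\partial_\nu^a V(x_1,0)= (-\Delta)^s f(x_1)$. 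Concretely, $f$ is an antiderivative of a bump-like function, so $(-\Delta)^sf$ is smooth, bounded, and one can compute its sign and decay.

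The key steps, in order: First I would record that $f$ is smooth, increasing, with $f(-\infty)=0$, $f(+\infty)=1$, and that $f' \in L^1(\R)\cap\C^\infty$ with $f'(x)\sim C|x|^{a-2}$ as $|x|\to\infty$ (here $a-2 = -1-2s<-1$, so the integral converges and $f$ has the stated limits). Second, I would express $(-\Delta)^sf(x)$ via the principal-value integral $c_{1,s}\,\pv\int_\R \frac{f(x)-f(\xi)}{|x-\xi|^{1+2s}}\,d\xi$ and, since $f$ is monotone non-decreasing, split the integral at $\xi=x$: for $\xi<x$ the integrand is $\geq0$, for $\xi>x$ it is $\leq0$. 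For $x<0$ the "left" part $\int_{-\infty}^x$, which is positive, should dominate because $f$ is small and slowly varying there while the mass of $f'$ sits near and to the right of the origin; this will give $(-\Delta)^sf(x)\geq -(\text{something comparable to }f(x))$. Third, to turn the heuristic into the bound $(-\Delta)^sf(x)\geq -cf(x)$ for $x<0$, I would write $f(x)=\int_{-\infty}^x f'(t)\,dt$ and $f(x)-f(\xi) = -\int_x^\xi f'(t)\,dt$ for $\xi>x$, estimate the negative contribution $-\int_x^\infty\frac{\int_x^\xi f'(t)\,dt}{(\xi-x)^{1+2s}}\,d\xi$ by Fubini as a constant times $\int_x^\infty \frac{f'(t)}{(t-x)^{2s}}\,dt$, and then bound this last quantity by $c\,f(x)$ using that for $x<0$ one has $t-x\geq |x|$ on part of the range and the fast decay $f'(t)\lesssim (1+t)^{a-2}$ controls the rest; the positive contribution from $\xi<x$ is simply discarded (it only helps).

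The main obstacle I expect is the third step: making the comparison of $\int_x^\infty \frac{f'(t)}{(t-x)^{2s}}\,dt$ with $f(x)=\int_{-\infty}^x f'(t)\,dt$ uniform in $x<0$. Near $x=0^-$ both sides are order one, so the inequality is qualitative there; but as $x\to-\infty$, $f(x)\sim c|x|^{a-1} = c|x|^{-2s}$ decays polynomially, and one must check that the tail integral $\int_x^\infty\frac{f'(t)}{(t-x)^{2s}}\,dt$ decays at least as fast — this is where the precise asymptotics $f'(t)\asymp (1+|t|)^{-1-2s}$ and a careful splitting of the range $t\in(x,0)$ versus $t>0$ are needed. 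An alternative, perhaps cleaner, route that avoids the delicate bookkeeping is to compare $f$ directly with the known profile: the function $w(x,y):=(x^2+y^2)^{(a-1)/2}\cdot\mathbf{1}_{\{x<0\}}$ type construction, or more robustly, to use the explicit $L_a$-harmonic extension $V$ of $f$ and a barrier argument showing $L_a(V - \text{const}\cdot f\text{-type extension})$ has a sign; but since the lemma only asks for an inequality with an unspecified constant $c>0$, the direct kernel estimate above is the most economical, and the compact-support-like decay of $f'$ is exactly what makes it work.
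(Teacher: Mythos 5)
Your route is genuinely different from the paper's: you work directly with the singular-integral kernel $\pv\int_{\R}\frac{f(x)-f(\xi)}{|x-\xi|^{1+2s}}\,d\xi$, propose to discard the favorable contribution from $\xi<x$, and estimate only $\xi>x$ via Fubini. The paper instead first passes through the $L_a$-extension to obtain the odd-kernel representation $(-\Delta)^sf(x)=\pv\int\frac{|x-r|^a}{x-r}\,f'(r)\,dr=\pv\int \sgn(x-r)|x-r|^{-2s}f'(r)\,dr$, then rescales $r=t|x|$ and splits the $t$-range into $(-\infty,-3/2)$, the principal-value piece $(-3/2,-1/2)$, and $(-1/2,-K/|x|)$, each estimated against the model $f'(t|x|)\asymp|t|^{a-2}|x|^{a-2}$, before comparing with $f(x)\lesssim|x|^{a-1}$.

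There is a genuine gap in your plan when $s\geq\tfrac12$. Near $\xi=x$ the two sides of the singularity both behave like $\pm f'(x)\,|\xi-x|^{-2s}$; the principal value is finite only because of the odd-symmetric cancellation across $\xi=x$. If you discard the part $\xi<x$, what remains, $-\int_x^\infty\frac{\int_x^\xi f'(t)\,dt}{(\xi-x)^{1+2s}}\,d\xi$, has an integrand $\sim -f'(x)(\xi-x)^{-2s}$ near $\xi=x$ and hence equals $-\infty$ as soon as $2s\geq1$; the Fubini output $\int_x^\infty\frac{f'(t)}{(t-x)^{2s}}\,dt$ is likewise divergent at $t=x$. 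So the one-sided bound is meaningless on exactly half the parameter range. To repair it you would need to keep a symmetric neighborhood of $\xi=x$ together (for example subtract the linear term $f'(x)(\xi-x)$ before splitting), which is in effect the cancellation the paper retains by integrating by parts to get the odd kernel times $f'$, and then handling the pv term explicitly on the interval $(-3/2,-1/2)$ after rescaling. You also declare your step 3 unfinished precisely at the point where uniformity in $x<0$ must be established; that careful asymptotic bookkeeping as $x\to-\infty$ is the substance of the lemma, not a detail to defer.
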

\begin{proof}
The function $f$ under consideration is increasing, smooth and such that there exist $c,C > 0$ with
\[
    \lim_{|t|\to \infty} f'(t)|t|^{2-a} = C > 0 \quad \text{and} \quad \lim_{|t|\to \infty} f''(t)|t|^{3-a} = c.
\]
The $s$-laplacian of the function $f$ is well-defined. Thanks to the extension representation of the fractional laplacian, we can consider
\[
\begin{split}
    v(x,y) &= \int_{\R} P_{a}(\xi,y) f(x-\xi) \mathrm{d}\xi = \int_{\R} y^{1-a}\frac{f(x-\xi)}{ (\xi^2+y^2)^{1-a/2} } \mathrm{d} \xi \\
    &= \left\{ t = \xi/y \right\} = \int_{\R} \frac{f(x-ty)}{(1+t^2)^{1-a/2} } \mathrm{d} t
\end{split}
\]
so that
\[
    \begin{split}
    \partial^a_{\nu} v(x,0) &= \lim_{y \to 0^+} -y^a \frac{\partial}{\partial y} \int \frac{f(x-ty)}{(1+t^2)^{1-a/2} } \mathrm{d} t = \lim_{y \to 0^+} \int y^{a}t \frac{f'(x-ty)}{(1+t^2)^{1-a/2} } \mathrm{d} t \\
                     & = \left\{ r = yt \right\} = \lim_{y \to 0^+} \int \frac{r}{(y^2+r^2)^{1-a/2} }f'(x-r) \mathrm{d} r \\
    &= \pv \int \frac{|r|^a}{r}f'(x-r) \mathrm{d} r = \pv \int \frac{|x-r|^a}{x-r} f'(r) \mathrm{d} r.
    \end{split}
\]
Let us observe that, due to the decay properties of $f'$ at infinity, the last principal value acts only around the singularity $x=r$, that is
\[
    (-\Delta)^s f(x) = \lim_{\eps \to 0^+} \int_{\R \setminus (r-\eps, r+\eps)} \frac{|x-r|^a}{x-r} f'(r) \mathrm{d} r.
\]
We aim at proving that there exists a positive $c>0$ such that the estimate
\[
    (-\Delta)^s f (x) \geq -c f(x)
\]
holds for every $x \leq 0$. As a first step, we are going to estimate the asymptotic behavior of the right hand side as $x \to -\infty$. To this end, letting $K > 0$ be a fixed number, we write
\begin{equation}\label{eqn: first decomposition}
    (-\Delta)^s f (x) =  \pv \int_{-\infty}^{-K}  \frac{|x-r|^a}{x-r} f'(r) \mathrm{d} r + \int_{-K}^{\infty}  \frac{|x-r|^a}{x-r} f'(r) \mathrm{d} r
\end{equation}
(this decomposition is possible thanks to the prescribed decay of $f'$). We estimate the two contributions separately. First $(a<1)$
\[
    \int_{-K}^{\infty}  \frac{|x-r|^a}{x-r} f'(r) \mathrm{d} r \geq -(-K-x)^{a-1} \int_{-K}^{\infty} f'(r) \mathrm{d} r \geq - C|x|^{a-1}.
\]
We further decompose the second integral in \eqref{eqn: first decomposition}, to find
\begin{multline*}
    \pv \int_{-\infty}^{-K}  \frac{|x-r|^a}{x-r} f'(r) \mathrm{d} r =\left\{ t = r/|x| \right\} =-|x|^a \pv \int_{-\infty}^{-K/|x|}  \frac{|1+t|^a}{1+t} f'(t|x|) \mathrm{d} t \\
    = -|x|^a \left[\int_{-\infty}^{-3/2}  %\frac{|1+t|^a}{1+t} f'(t|x|)
    \dots\, \mathrm{d} t + \pv \int_{-3/2}^{-1/2}  %\frac{|1+t|^a}{1+t} f'(t|x|)
    \dots\, \mathrm{d} t  + \int_{-1/2}^{-K/|x|}  %\frac{|1+t|^a}{1+t} f'(t|x|)
    \dots\, \mathrm{d} t\right]
\end{multline*}
In the first part we use the estimate
\[
    f'(t|x|) \geq c |t|^{a-2} |x|^{a-2}
\]
in order to obtain
\[
    -|x|^a \int_{-\infty}^{-3/2}  \frac{|1+t|^a}{1+t} f'(t|x|) \mathrm{d} t \geq - c |x|^{2a-2} \int_{-\infty}^{-3/2}  \frac{|1+t|^a}{1+t} |t|^{a-2} \mathrm{d} t \geq -C |x|^{2a-2}.
\]
In the principal value we write
\[
    -|x|^a\pv \int_{-3/2}^{-1/2}  \frac{|1+t|^a}{1+t} f'(t|x|) \mathrm{d} t = -|x|^{2a-2} \pv \int_{-3/2}^{-1/2}  \frac{|1+t|^a}{1+t} f'(t|x|)|x|^{2-a} \mathrm{d} t.
\]
Since
\[
    f'(t|x|)|x|^{2-a} \to C|t|^{a-2} \quad \text{in $\C^1\left(-\frac32,-\frac12\right)$ as $|x|\to \infty$}
\]
and
\[
    \pv \int_{-3/2}^{-1/2}  \frac{|1+t|^a}{1+t} |t|^{a-2} \mathrm{d} t =\left\{r = -1-t\right\}= \pv \int_{-1/2}^{1/2}  -\frac{|r|^a}{r} (r+1)^{a-2} \mathrm{d} r > 0,
\]
we obtain the lower bound
\[
    -|x|^a \pv \int_{-3/2}^{-1/2}  \frac{|1+t|^a}{1+t} f'(t|x|) \mathrm{d} t \geq -C |x|^{2a-2}.
\]
To estimate the last integral we use
\[
    f'(t|x|) \leq C |t|^{a-2} |x|^{a-2}
\]
to obtain
\begin{multline*}
    -|x|^a \int_{-1/2}^{-K/|x|}  \frac{|1+t|^a}{1+t} f'(t|x|) \mathrm{d} t \geq - C |x|^{2a-2} \int_{-1/2}^{-K/|x|}  \frac{|1+t|^a}{1+t} |t|^{a-2} \mathrm{d} t\\
     \geq- C |x|^{2a-2} \left(1 + \frac{1}{|x|^{a-1}} \right) \geq - C |x|^{a-1}.
\end{multline*}
As a consequence
\[
    (-\Delta)^s f (x) \geq - C\left( |x|^{a-1}+ |x|^{2a-2}\right) \geq - C |x|^{a-1}.
\]
On the other hand, by a direct estimate we have ($x\ll0$)
\[
    f(x) \leq C \frac{1}{|x|^{1-a}}
\]
which immediately yields that for $x \ll 0$ there exists $c>0$ such that
\[
    (-\Delta)^s f (x) \geq - c f(x).
\]
Due to the positivity and regularity of $f$, this estimates extends to every $x \leq 0$.
\end{proof}

We can conclude with the proof of  Lemma \ref{lem: decay with perturbations}.

\begin{proof}[Proof of Lemma \ref{lem: decay with perturbations}]
Let us first consider, for $M>0$, the scaling $x \mapsto M^{1/2s}x$ and let us introduce the function $f_M(x) := f(M^{1/2s}x)$. It follows that
\[
    (-\Delta)^s f_M (x) = M^{2s/2s} \left[(-\Delta)^s f\right] (M^{1/2s}x) \geq -cM f_M(x)
\]
It is then clear that if we let
\[
    g_M(x) := f_M(t-1) + f_M(-t-1)
\]
then for any $M>0$ it holds
\[
    \begin{cases}
    (-\Delta)^s g_M (x) \geq -cM g_M(x) &\text{ in $(-1,1)$ }\\
    g_M(x) \geq \frac12  &\text{ in $\R \setminus (-1,1)$ }\\
    g_M(x) \leq C M^{-1}  &\text{ in $\left(-\frac12,\frac12\right)$. }
    \end{cases}
\]
The proof follows by a comparison argument between $v$ and the supersolution
\[
    w_{\delta} := \delta \frac{1}{M} + \int_{\R} P_{a}(\xi,y) g_{M} (x-\xi) \mathrm{d}\xi. \qedhere
\]
\end{proof}
The previous estimate allows to prove the following.
\begin{proposition}\label{prp: global eigenfunction}
Let $v$ satisfy
\begin{equation*}%\label{eqn: global eigenfunction}
    \begin{cases}
    L_a v = 0 & \text{in } \R^{N+1}_+ \\
    \partial_{\nu}^a v = - \lambda v & \text{on } \R^{N}
    \end{cases}
\end{equation*}
for some $\lambda > 0$ and let us suppose that
for some $\gamma < \min(1,2s)$, $C >0$ it holds
\[
    |v(X)| \leq C(1+|X|^{\gamma})
\]
for every $X$. Then $v$ is constant.
\end{proposition}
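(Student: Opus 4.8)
The strategy is to show that the boundary datum $v(x,0)$ must vanish identically, and then invoke Corollary \ref{cor: liouville L_a}. First I would observe that, since $\lambda>0$, the function $v$ is a subsolution to the Robin problem of Lemma \ref{lem: decay with perturbations} on any ball (after normalising), with zero forcing term $h$; but the quantitative decay there is not immediately enough, so the cleaner route is the following. Consider $w:=|v|$: by the Kato-type inequality (in the weak form used repeatedly in Section \ref{sec:acf}, cf.\ Remark \ref{rem:chang_sign_ACF_segr}), $w$ satisfies $L_a w\le0$ in $\R^{N+1}_+$ and $\partial_\nu^a w\le-\lambda w$ on $\R^N$, and the growth bound $w(X)\le C(1+|X|^\gamma)$ persists. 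So it suffices to prove that a non negative $w$ with these properties is identically zero.

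Next I would exploit Lemma \ref{lem: decay with perturbations} on dilated balls. Fix $R>0$ large and apply the lemma, rescaled to $B_R^+$ (with $\delta$, $h\equiv0$, and constant $M$ replaced by $\lambda R^{2s}$ after the natural scaling $X\mapsto X/R$, which turns $\partial_\nu^a$ into $R^{1-2s}\partial_\nu^a$ and the Robin coefficient $\lambda$ into $\lambda R^{2s}$): one gets
\[
\sup_{\partial^0 B^+_{R/2}} w \;\le\; \frac{C}{\lambda R^{2s}}\,\sup_{\partial^+ B^+_R} w \;\le\; \frac{C}{\lambda R^{2s}}\,C(1+R^\gamma)\;\le\; C' R^{\gamma-2s}.
\]
Since $\gamma<2s$, letting $R\to\infty$ forces $w(x,0)\equiv0$, i.e.\ $v(x,0)\equiv0$. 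One small point to check here is that the sup on the spherical part $\partial^+B_R^+$, rather than on all of $B_R$, is what the growth hypothesis controls, which is exactly what the lemma provides; and that the scaling of Lemma \ref{lem: decay with perturbations} is legitimate since $L_a$ is homogeneous and the statement is scale covariant.

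Finally, with $v(x,0)\equiv0$, the growth condition $|v(X)|\le C(1+|X|^\gamma)$ with $\gamma<\min(1,2s)$ (in particular $\gamma<2s$) places us exactly in the hypotheses of Proposition \ref{prp: pre_liouville L_a} (note $\gamma\in[0,2s)$), which yields that $v$ is constant; being zero on $\{y=0\}$, it is identically zero, hence in particular constant, as claimed. The main obstacle is the rescaling step: one must be careful that applying Lemma \ref{lem: decay with perturbations} after dilation produces the coefficient $\lambda R^{2s}\to\infty$ (not a fixed $M$), which is precisely what makes the right-hand side decay; and one must justify passing from $v$ to $|v|$ at the level of the differential inequalities, which is the content of the Kato inequality implicit in \eqref{eqn: acf_per_moduli} and Remark \ref{rem:chang_sign_ACF_segr}.
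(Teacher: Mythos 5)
Your proof is correct and follows essentially the same approach as the paper's: reduce to a nonnegative subsolution (you pass to $|v|$, the paper works separately with $v^+$ and $v^-$), apply Lemma \ref{lem: decay with perturbations} on dilated balls so that the effective Robin coefficient diverges, deduce that the trace $v(\cdot,0)$ vanishes identically, and finish via Proposition \ref{prp: pre_liouville L_a}. One remark: your rescaling correctly yields the Robin coefficient $\lambda R^{2s}$ and hence the decay $R^{\gamma-2s}$, whereas the paper's displayed chain writes $\frac{1}{\lambda r}$ (a slip carried over from the $s=1/2$ case); both give the conclusion since $\gamma<\min(1,2s)$, but your version is the one that actually comes out of the scaling.
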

\begin{proof}
Let either $z=v^+$ or $z=v^-$. In both cases,
\[
    \begin{cases}
    L_a z \leq 0, & \text{in }\R^{N+1}_+\\
    \partial_{\nu}^a z \leq -\lambda z, & \text{on }\R^N.
    \end{cases}
\]
By translating and scaling, Lemma \ref{lem: decay with perturbations} implies that
\[
z(x_0,0) \leq \sup_{\partial^0 B_{r/2}(x_0,0)} z \leq \frac{1}{\lambda r}\sup_{\partial^+ B_r(x_0,0)} z
\leq C \frac{1+r^{\gamma}}{r}.
\]
Letting $r\to\infty$ the proposition follows.
\end{proof}
\begin{proposition}\label{prp: prescribed constant normal derivative}
Let $v$ satisfy
\begin{equation*}%\label{eqn: global eigenfunction}
    \begin{cases}
    L_a v = 0 & \text{in } \R^{N+1}_+ \\
    \partial_{\nu}^a v = \lambda  & \text{on } \R^{N}
    \end{cases}
\end{equation*}
for some $\lambda \in \R$ and let us suppose that
for some $\gamma < \min(1,2s)$, $C >0$ it holds
\[
    |v(X)| \leq C(1+|X|^{\gamma})
\]
for every $X$. Then $v$ is constant.
\end{proposition}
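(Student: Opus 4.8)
The plan is to reduce Proposition \ref{prp: prescribed constant normal derivative} to Proposition \ref{prp: global eigenfunction} by absorbing the inhomogeneous boundary data into a suitable explicit function. First I would look for an explicit $L_a$-harmonic function $w$ on $\R^{N+1}_+$ whose conormal derivative on $\R^N$ equals the constant $\lambda$; the natural candidate is a function depending only on $y$, namely $w(x,y) = c_\lambda\, y^{1-a} = c_\lambda\, y^{2s}$ (up to normalization), since $L_a(y^{2s}) = -\div(y^a\nabla(y^{2s})) = -\partial_y(2s\,y^a y^{2s-1}) = -2s(1-a)y^{a+2s-2}\cdot\text{(const)}$—I would pick instead the genuinely $L_a$-harmonic profile, recalling from the proof of Lemma \ref{lem: nuacf>0} that $u(x,y)=y^{1-a}$ achieves $\lambda_1^s(\emptyset)$, hence is $L_a$-harmonic up to the standard normalization, and has $\partial^a_\nu$ equal to a nonzero constant on $\{y=0\}$. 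Scaling this by the appropriate constant produces $w$ with $L_a w = 0$ in $\R^{N+1}_+$ and $\partial^a_\nu w = \lambda$ on $\R^N$.

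Then I would set $\tilde v := v - w$. By linearity $\tilde v$ is $L_a$-harmonic in $\R^{N+1}_+$ and satisfies $\partial^a_\nu \tilde v = 0$ on $\R^N$. For the growth, since $w(x,y)=c_\lambda y^{2s}$ grows like $|X|^{2s}$, the bound $|\tilde v(X)| \le C(1+|X|^{\max(\gamma,2s)})$ is too weak to apply Corollary \ref{cor: liouville L_a} directly when $\gamma < 2s$. So the growth of the subtracted profile is the main obstacle, and it forces a small extra argument: I would instead observe that $\tilde v$ satisfies the hypotheses of Corollary \ref{cor: liouville L_a} only after controlling its growth more carefully, or—more cleanly—argue that since $w$ depends only on $y$, the $x$-behavior of $v$ is already governed by the sublinear bound, so the even reflection of $\tilde v$ across $\{y=0\}$ is $L_a$-harmonic on all of $\R^{N+1}$ and one can run the mean value / Liouville argument of Proposition \ref{prp: liouville L_a} with the growth split into a sublinear part (from $v$) plus the explicit radial part $y^{2s}$, whose contribution to difference quotients $v(x',0)-v(x'',0)$ vanishes since $w$ is constant ($=0$) on $\{y=0\}$.

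Concretely, the cleanest route avoids subtracting $w$ at all: since $v|_{y=0}$ need not vanish, I would first apply the mean-value property of $L_a$-harmonic functions (as in the proof of Proposition \ref{prp: liouville L_a}) to the even reflection of $v$, using $|v(X)| \le C(1+|X|^\gamma)$ with $\gamma < 1$, to conclude that $x \mapsto v(x,0)$ is constant, say $v(x,0)\equiv c$. Replacing $v$ by $v - c$ we may assume $v(x,0)\equiv 0$; but then $\partial^a_\nu v = \lambda$ on $\{y=0\}$ combined with $v(x,0)=0$ means the boundary condition $v\,\partial^a_\nu v = 0$ holds, and the Pohozaev/Almgren machinery or simply Proposition \ref{prp: pre_liouville L_a} applies once $\lambda=0$; if $\lambda \neq 0$, I would test the equation against $\Gamma_1^s v\eta$ exactly as in Proposition \ref{prp: pre_liouville L_a}, noting the extra boundary term $\int_{\partial^0} \lambda v \Gamma_1^s \eta\,\de x$ now vanishes because $v=0$ on $\{y=0\}$, yielding $\Phi(r) \le C r^{2(\gamma-2s)} \to 0$ and hence $v \equiv 0$, so $v$ was constant.

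I expect the genuine difficulty to be the bookkeeping around the constant boundary value: one must run the argument in the right order—first kill the tangential dependence via the mean value property (which only needs $\gamma<1$), then normalize $v(x,0)\equiv 0$, and only then exploit the ACF monotonicity of Proposition \ref{prp: ACF y 2s} (whose hypotheses are met because $v(x,0)=0$) together with the growth $\gamma < 2s$ to force $v$ constant. The subtlety is that the boundary term created by $\partial^a_\nu v = \lambda$ is harmless precisely because, after normalization, $v$ vanishes where that term is supported; this is the same mechanism already used implicitly in Proposition \ref{prp: pre_liouville L_a}, so no new estimate is required beyond what the excerpt provides.
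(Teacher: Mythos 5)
Your two proposed routes are not equivalent, and the one you call \emph{cleanest} has a genuine gap. When $\lambda\neq 0$, the even reflection $\tilde v(x,y):=v(x,|y|)$ of $v$ across $\{y=0\}$ is \emph{not} $L_a$-harmonic on $\R^{N+1}$: testing against an arbitrary $\phi\in\C^\infty_0(\R^{N+1})$ and reducing to the even part $\psi(x,y)=\phi(x,y)+\phi(x,-y)$ gives $\int_{\R^{N+1}}|y|^a\nabla\tilde v\cdot\nabla\phi=\int_{\R^N}\lambda\,\psi(x,0)\,\de x=2\lambda\int_{\R^N}\phi(x,0)\,\de x$, i.e.\ $L_a\tilde v=2\lambda\,\delta_{\{y=0\}}$. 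So the mean value property of Proposition~\ref{prp: liouville L_a} does not hold for $\tilde v$, and the step ``apply the mean-value property to the even reflection of $v$, conclude $v(\cdot,0)$ is constant'' is unjustified. This is precisely the point where the inhomogeneous boundary datum cannot be ignored.

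Your first route — subtract $w(x,y)=-\frac{\lambda}{2s}\,y^{2s}$ so that $\tilde v:=v-w$ has $\partial^a_\nu\tilde v=0$ and admits an $L_a$-harmonic even reflection — is the one that can be made to work, but the justification you offer for controlling the difference quotient is not the right one. The correct observation is not that $w$ vanishes on $\{y=0\}$, but that $w$ depends only on $y$: consequently $\int_{B_R(x',0)}|y|^a w=\int_{B_R(x'',0)}|y|^a w$ for any $x',x''$, so in the mean-value identity for $\tilde v$ the contributions of $w$ over the two balls cancel \emph{exactly}, and
\[
\tilde v(x',0)-\tilde v(x'',0)=\frac{C}{R^{N+1+a}}\left(\int_{B_R(x',0)}|y|^a v-\int_{B_R(x'',0)}|y|^a v\right)\leq C\,|x'-x''|\,R^{\gamma-1},
\]
using only the sublinear bound on $v$. (Simply splitting $|\tilde v|\leq C(1+|X|^\gamma)+C|y|^{2s}$ and estimating the $|y|^{2s}$ piece in absolute value does \emph{not} work: it produces a term of order $R^{-a}=R^{2s-1}$, which diverges for $s>1/2$.) Once this cancellation is made explicit, your structure — conclude $v(\cdot,0)$ is constant, normalize to $v(\cdot,0)\equiv 0$, observe that $v\,\partial^a_\nu v\equiv 0$ on $\{y=0\}$ so the ACF quantity of Proposition~\ref{prp: ACF y 2s} is monotone, and run the decay estimate of Proposition~\ref{prp: pre_liouville L_a} — does close the argument.

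For comparison, the paper takes a shorter path that entirely sidesteps the mean-value subtlety. It sets $w_h(x,y):=v(x+h,y)-v(x,y)$ for fixed $h\in\R^N$; then $L_a w_h=0$, $\partial^a_\nu w_h=\lambda-\lambda=0$, and $w_h$ inherits the same sublinear growth as $v$, so the even reflection of $w_h$ is honestly $L_a$-harmonic and Corollary~\ref{cor: liouville L_a} forces $w_h\equiv c_h$. Additivity of $h\mapsto c_h$ and the growth bound then give $v(x,y)=v(0,y)$, and the one-dimensional $L_a$-harmonic function $y\mapsto v(0,y)=A+By^{1-a}$ must be constant because $\gamma<1-a=2s$. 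In particular $\lambda=0$ emerges a posteriori. The two approaches buy different things: the translation-difference argument is cleaner and avoids any explicit barrier, while your subtraction route exhibits the role of the explicit profile $y^{2s}$ and ties the proposition more visibly to the ACF machinery; but as written your ``cleanest route'' has a real hole, and your first route needs the $y$-only cancellation spelled out to be correct.
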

\begin{proof}
For $h \in \R^N$, let $w(x,y) := v(x+h,y) - v(x,y)$. Then $w$ solves
\[
    \begin{cases}
    L_a w = 0 & \text{in } \R^{N+1}_+ \\
    \partial_{\nu}^a w = 0 & \text{on } \R^{N}
    \end{cases}
\]
and, as usual, we can reflect and use the growth condition to infer that $w$ has to be constant, that is $v(x+h,y) = c_h + v(x,y)$. Deriving the previous expression in $x_i$, we find that
\[
    v(x,y) = \sum_{i = 1}^k c_i(y) x_i + c_0(y).
\]
Using again the growth condition, we see that $c_i \equiv 0$ for $i = 1, \dots, k$, while $c_0$ is constant. We observe that, consequently, $\lambda = 0$.
\end{proof}
\begin{proposition}\label{prp: liouville system}
Let $\bv\in H^{1;a}_\loc\left(\overline{\R^{N+1}_+}\right)$ be continuous and satisfy
\begin{equation*}%\label{eqn: global eigenfunction}
    \begin{cases}
    L_a v_i = 0 & \text{in } \R^{N+1}_+ \\
    \partial_\nu^a v_i = - v_i\tsum_{j\neq i} a_{ij} v_j^2       & \text{on } \R^{N},
    \end{cases}
\end{equation*}
and let $\nuACF$ be defined according to \eqref{eqn: def_nuacf}.
If for some $\gamma \in (0,\nuACF)$ there exists $C$ such that
\begin{equation*}%\label{eqn: uniform global bound}
    |\bv(X)| \leq C\left(1 + |X|^{\gamma}\right),
\end{equation*}
for every $X$, then $k-1$ components of $\bv$ annihilate and the last is constant.
\end{proposition}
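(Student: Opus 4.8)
My plan is to reduce, by means of the two-phase Alt--Caffarelli--Friedman monotonicity formula, to the case in which only one component is non trivial, where a Liouville-type statement already applies. So I would argue by contradiction, assuming that at least two components of $\bv$ are non trivial and, after relabelling, calling them $v_1,v_2$. The first observation is that, since $L_a v_i=0$ and $\partial_\nu^a v_i=-v_i\tsum_{l\neq i}a_{il}v_l^2$, for every non negative $\phi\in\C_0^\infty(\overline{\R^{N+1}_+})$ and $j=3-i$ one has
\[
\int\limits_{\R^{N+1}_+}y^a\nabla v_i\cdot\nabla(v_i\phi)\,\de x\de y + \int\limits_{\R^N}a_{ij}v_i^2v_j^2\phi\,\de x = -\int\limits_{\R^N}v_i^2\phi\tsum_{l\neq i,\,l\neq j}a_{il}v_l^2\,\de x\leq0,
\]
so that the pair $(v_1,v_2)$ fits the hypotheses of Proposition \ref{thm: ACF perturbed}. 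Since $\gamma<\nuACF$, I would fix $\nu'\in(\gamma,\nuACF)$; the proposition then produces $\bar r>1$ such that $\Phi(r):=\Phi_1(r)\Phi_2(r)$ is non decreasing on $(\bar r,\infty)$, with $\Phi_i$ as in that statement.

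The core of the argument is a growth bound for $\Phi_i$ coming from the polynomial bound on $\bv$. I would test the equation for $v_i$ against $\Gamma_1^s v_i\eta^2$, with $\eta$ a radial cutoff equal to $1$ on $B_r$, supported in $B_{2r}$, with $|\nabla\eta|\le C/r$; integrating by parts the mixed term and exploiting $L_a\Gamma_1^s\ge0$ and $\partial_\nu^a\Gamma_1^s=0$, and \emph{keeping the boundary competition term on the left-hand side}, I expect to reach, exactly as in the proof of Proposition \ref{prp: pre_liouville L_a},
\[
\int\limits_{B_r^+}y^a|\nabla v_i|^2\Gamma_1^s\,\de x\de y + \int\limits_{\partial^0B_r^+}a_{ij}v_i^2v_j^2\Gamma_1^s\,\de x \le C\,r^{2\gamma},
\]
for $r$ large, using on $B_{2r}\setminus B_r$ the estimates $\Gamma_1^s\simeq r^{2s-N}$, $|\nabla\Gamma_1^s|\simeq r^{2s-N-1}$, $|v_i|\le Cr^\gamma$ and $\int_{B_{2r}^+}y^a\,\de x\de y\simeq r^{N+2-2s}$. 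Consequently $\Phi_i(r)\le Cr^{2(\gamma-\nu')}$ and
\[
\Phi(r)\le C\,r^{4(\gamma-\nu')}\to0\qquad\text{as }r\to+\infty;
\]
since $\Phi$ is non decreasing and non negative on $(\bar r,\infty)$, this forces $\Phi\equiv0$ there.

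To conclude, I would note that each $r\mapsto r^{2\nu'}\Phi_i(r)$ is non decreasing (a sum of integrals of non negative integrands over expanding domains), so $\Phi_1\Phi_2\equiv0$ on $(\bar r,\infty)$ forces one factor, say $\Phi_1$, to vanish identically there. Then $\int_{B_r^+}y^a|\nabla v_1|^2\Gamma_1^s=0$ and $\int_{\partial^0B_r^+}a_{12}v_1^2v_2^2\Gamma_1^s=0$ for all $r>\bar r$; since $\Gamma_1^s>0$, the first gives $\nabla v_1\equiv0$, so $v_1$ is a non zero constant, and the second then gives $v_2(\cdot,0)\equiv0$ on $\R^N$. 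But $v_2$ is $L_a$-harmonic, vanishes on $\R^N$, and grows at most like $|X|^\gamma$ with $\gamma<\nuACF\le s<2s$, so Proposition \ref{prp: pre_liouville L_a} forces $v_2\equiv0$, contradicting its non triviality. Hence at most one component of $\bv$ is non trivial; if exactly one, say $v_1$, survives, then $\partial_\nu^a v_1=-v_1\tsum_{j\neq1}a_{1j}v_j^2\equiv0$ and, since $\gamma<\nuACF\le s<\min(2s,1)$, Corollary \ref{cor: liouville L_a} makes $v_1$ constant (the case $\bv\equiv0$ being trivial). I expect the only genuinely delicate point to be the weighted Caccioppoli estimate of the second paragraph: one has to control the boundary competition term by $r^{2\gamma}$ rather than by the naive and insufficient $r^{4\gamma+2s}$, which is precisely why it must be retained in the test-function identity rather than estimated separately.
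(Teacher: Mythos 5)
Your argument is correct and matches the paper's sketched proof: it reduces to the perturbed two-phase Alt--Caffarelli--Friedman formula (Proposition~\ref{thm: ACF perturbed}), derives a weighted Caccioppoli bound exactly as in the proof of Proposition~\ref{prp: pre_liouville L_a} to force $\Phi\equiv 0$ on a tail $(\bar r,\infty)$, and finishes with Corollary~\ref{cor: liouville L_a}. The explicit use of the separate monotonicity of $r\mapsto r^{2\nu'}\Phi_i(r)$ to extract that one factor of $\Phi$ vanishes identically (rather than merely that the product does) is precisely the detail the paper leaves implicit by deferring to \cite[Proposition 4.1]{tvz1}, and your version of it is accurate.
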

\begin{proof}
We only sketch the proof, referring to \cite[Proposition 4.1]{tvz1} for a detailed proof
in the case $s=1/2$. To start with, we observe that any pair of components of $\bv$ satisfy the assumptions of Proposition \ref{thm: ACF perturbed}; as a consequence, if  $\bv$ had two nontrivial components, then one could
argue as in the proof of Proposition \ref{prp: pre_liouville L_a} in order to obtain
a contradiction. Once we know that all but one component are trivial, we can conclude by applying
Corollary \ref{cor: liouville L_a} to the last one.
\end{proof}
\begin{proposition}\label{prp: liouville inequalities}
Let $\bv$ satisfy the assumptions of Proposition \ref{thm:_Almgren_for_classG_s} and $\gamma \in (0,\nuACF)$.
\begin{enumerate}
 \item If there exists $C$ such that
\begin{equation*}%\label{eqn: uniform global bound}
    |\bv(X)| \leq C\left(1 + |X|^{\gamma}\right),
\end{equation*}
for every $X$, then $k-1$ components of $\bv$ annihilate;
 \item if furthermore $\bv \in \C^{0,\gamma}\left(\overline{\R^{N+1}_+}\right)$ and
 \[
\gamma <
\begin{cases}
\nuACF   &  0 < s \leq \dfrac12\\
\min(\nuACF, 2s-1) & \dfrac12 < s < 1,
\end{cases}
\]
then the only possibly nontrivial component is constant.
\end{enumerate}
\end{proposition}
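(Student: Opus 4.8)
The plan is to derive part (1) from the two-phase Alt--Caffarelli--Friedman formula (Proposition \ref{thm: ACF}), running the contradiction scheme of the proof of Proposition \ref{prp: pre_liouville L_a}, and then to derive part (2) by feeding H\"older continuity into Corollary \ref{cor: holder homogeneous} and invoking the one-phase formulae together with a capacitary argument.

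\emph{Part (1).} Suppose by contradiction that two components of $\bv$, say $v_1,v_2$ after relabelling, are nontrivial; by the argument of Remark \ref{rem:chang_sign_ACF_segr} we may assume $v_i\ge0$. Since $v_1v_2=0$ on $\{y=0\}$ and both traces are nontrivial, their positivity sets are disjoint nonempty open subsets of $\R^N$, and, $\R^N$ being connected, a short argument produces a point $x_0$ at which both $v_1(\cdot,0)$ and $v_2(\cdot,0)$ vanish (a point lying outside both positivity sets always exists). Then $(v_1,v_2)$ satisfies the hypotheses of Proposition \ref{thm: ACF} (the subsolution inequality becomes an equality, since $L_av_i=0$ and $v_i\partial_\nu^a v_i=0$), so
\[
\Phi(r):=\prod_{i=1}^{2}\frac1{r^{2\nuACF}}\int\limits_{B_r^+(x_0,0)} y^a\frac{|\nabla v_i|^2}{|X|^{N-2s}}\,\de x\,\de y
\]
is monotone non decreasing on $(0,\infty)$. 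On the other hand, testing $L_av_i=0$ against $v_i\eta^2$ (the flat-boundary term drops because $v_i\partial_\nu^a v_i=0$) gives the Caccioppoli estimate $\int_{B_\rho^+}y^a|\nabla v_i|^2\le C\rho^{-2}\int_{B_{2\rho}^+}y^a v_i^2\le C\rho^{2\gamma+N-2s}$, using the growth bound and $\int_{B_{2\rho}^+}y^a\,\de x\,\de y=C\rho^{N+1+a}$; a dyadic decomposition of $B_r^+$ (admissible since $N-2s>0$) then yields $\int_{B_r^+(x_0,0)}y^a|\nabla v_i|^2/|X|^{N-2s}\,\de x\,\de y\le Cr^{2\gamma}$ for $r\ge1$. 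Hence $\Phi(r)\le Cr^{4(\gamma-\nuACF)}\to0$ as $r\to\infty$, because $\gamma<\nuACF$; a monotone non negative function tending to $0$ vanishes identically, and since each factor of $\Phi$ is itself monotone non decreasing, this forces $\nabla v_i\equiv0$ for some $i$, so that this $v_i$ is a constant vanishing at $x_0$, i.e.\ $v_i\equiv0$ --- a contradiction. Thus $k-1$ components of $\bv$ annihilate.

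\emph{Part (2).} By (1) I may assume $v_1$ is the only possibly nontrivial component; it satisfies $L_av_1=0$, $v_1\partial_\nu^a v_1=0$, the hypotheses of Proposition \ref{thm:_Almgren_for_classG_s}, and $v_1\in\C^{0,\gamma}(\overline{\R^{N+1}_+})$, so $|v_1(X)|\le C(1+|X|^\gamma)$ with $\gamma<\nuACF\le s<\min(1,2s)$. If $v_1(\cdot,0)$ never vanishes it has constant sign, hence $\partial_\nu^a v_1\equiv0$ on $\R^N$, and Corollary \ref{cor: liouville L_a} gives that $v_1$ is constant. Otherwise $v_1$ vanishes at some point of $\{y=0\}$; by Corollary \ref{cor: holder homogeneous}, $v_1$ is homogeneous of degree $\gamma$ about that point and $\mathcal{Z}:=\{x: v_1(x,0)=0\}$ is an affine subspace, so after translating and rotating $\mathcal{Z}=\{x_1=\dots=x_m=0\}$ for some $m\in\{0,\dots,N\}$. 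Since a nonzero function homogeneous of positive degree cannot be constant, it now suffices to prove $v_1\equiv0$.

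\emph{Case analysis on $m$.} If $m=0$, then $v_1(\cdot,0)\equiv0$ and, being $L_a$-harmonic with zero Dirichlet datum, $v_1\equiv0$. If $m\ge2$, or $m=1$ with $s\le1/2$, the subspace $\mathcal{Z}$ has zero $H^s$-capacity; since $\partial_\nu^a v_1\in H^{-s}_\loc(\R^N)$ is supported in $\mathcal{Z}$ (it vanishes off $\mathcal{Z}$, where $v_1\ne0$), it must vanish identically, and again Corollary \ref{cor: liouville L_a} forces $v_1$ constant, hence $v_1\equiv0$. The remaining case, $m=1$ and $s>1/2$, is the crucial one: replacing $v_1$ by $|v_1|$ (still $\gamma$-homogeneous, $\C^{0,\gamma}$, and a subsolution, by \eqref{eqn: acf_per_moduli}), which vanishes on the hyperplane $\{x_1=0\}$, Proposition \ref{prp: ACF N-1} applies, so $r\mapsto r^{-(4s-2)}\int_{B_r^+}y^a|\nabla|v_1||^2/|X|^{N-2s}\,\de x\,\de y$ is monotone non decreasing, while the Caccioppoli/dyadic estimate bounds it by $Cr^{2(\gamma-(2s-1))}\to0$ because $\gamma<2s-1$; as above this forces $v_1\equiv0$. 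The main obstacle is exactly this dichotomy: when $\mathcal{Z}$ has codimension one and $s>1/2$, capacity no longer kills the Neumann trace, and one must resort to the ad hoc one-phase formula of Proposition \ref{prp: ACF N-1}; this is where the self-segregation threshold $2s-1$ appears, and the profile $v(x,y)=(x_1^2+y^2)^{(2s-1)/2}$ shows that the restriction $\gamma<2s-1$ cannot be relaxed.
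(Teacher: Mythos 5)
Your proof is correct and follows essentially the same route as the paper: part (1) via the two-phase ACF formula of Proposition~\ref{thm: ACF}, part (2) via Corollary~\ref{cor: holder homogeneous} and a case analysis on $\dim\mathcal{Z}$, invoking Proposition~\ref{prp: pre_liouville L_a}, the capacitary argument, and Proposition~\ref{prp: ACF N-1} for the codimension-one, $s>\tfrac12$ case. The only technical difference is that you derive the polynomial growth of $\int_{B_r^+}y^a|\nabla v_i|^2/|X|^{N-2s}$ from a Caccioppoli inequality plus dyadic summation, whereas the paper (via Proposition~\ref{prp: pre_liouville L_a}) gets it in one step by testing the equation with $\Gamma_1^s\,v\,\eta$; both work.

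One imprecision worth fixing: you say ``each factor of $\Phi$ is itself monotone non decreasing.'' The weighted factors $\Phi_i(r)=r^{-2\nuACF}\int_{B_r^+}y^a|\nabla v_i|^2/|X|^{N-2s}$ need \emph{not} be monotone --- only the product $\Phi_1\Phi_2$ is. What you actually use is that the \emph{unweighted} integrals $r\mapsto\int_{B_r^+}y^a|\nabla v_i|^2/|X|^{N-2s}$ are nondecreasing (being integrals of a nonnegative integrand over a growing domain). Since $\Phi_1\Phi_2\equiv0$, for each $r$ one of the two unweighted integrals vanishes on $B_r^+$; as each is nondecreasing in $r$, the sets on which they vanish are both intervals containing $0$, and their union is $(0,\infty)$, so one of them is all of $(0,\infty)$, giving $\nabla v_i\equiv0$ for some $i$. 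With this rephrasing the argument is airtight.
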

\begin{proof}
To prove 1., we can reason as in the proof of Proposition \ref{prp: liouville system}, using Proposition
\ref{thm: ACF} instead of Proposition \ref{thm: ACF perturbed}. Turning to 2.,
let $v$ denote the only non trivial component. If $v(x,0)\neq 0$ for every $x$, then we deduce that $\partial_\nu^a v(x,0)\equiv 0$, and we can conclude by using Corollary \ref{cor: liouville L_a}. On the other hand, let
\[
\mathcal{Z} = \{x \in \R^N: v(x,0) = 0\}\neq\emptyset.
\]
By Corollary \ref{cor: holder homogeneous}, we have that $v$ is $\gamma$-homogeneous about any point of $\mathcal{Z}$, which is then an affine subspace of $\R^N$, and that $v$ solves
\begin{equation}\label{eqn: capacity}
    \begin{cases}
    L_a v = 0 & \text{in } \R^{N+1}_+ \\
    v = 0 & \text{on } \mathcal{Z}\\
    \partial_{\nu}^a v = 0 & \text{on } \R^{N}\setminus \mathcal{Z}.
    \end{cases}
\end{equation}
Now, if $\mathcal{Z}=\R^N$, then Proposition \ref{prp: pre_liouville L_a} applies. On the other hand, if $\dim \mathcal{Z} \leq N-2s$, we obtain that $\mathcal{Z}$ has null $L_a$-capacity (this can be seen directly for the fractional laplacian in $\R^N$, see for instance \cite[Theorem 3.14]{land}), and the conclusion follows by Proposition \ref{prp: liouville L_a}. Finally, we are left to deal with the case
\[
\dim \mathcal{Z} = N-1\quad\text{and}\quad \frac12< s <1.
\]
In this situation, the previous capacitary reasoning fails, see Remark \ref{rem: capacity} below.
Nonetheless, assuming without loss of generality that $\mathcal{Z} = \{x \in \R^N: x_1 = 0\}$, we have
that $v$ satisfies the assumptions of Proposition \ref{prp: ACF N-1}. As a consequence, one can reason once again
as in the proof of Proposition \ref{prp: pre_liouville L_a}, obtaining a contradiction with the fact
that $\gamma<2s-1$.
\end{proof}
\begin{remark}\label{rem: capacity}
As we already mentioned in the introduction, in great contrast with the case $s\leq 1/2$, if $s>1/2$ the fundamental solution of the $s$-laplacian in $\R$ is bounded in a neighborhood of $x=0$. As a consequence, the function $\Gamma(x,y) = |(x_1,y)|^{2s-1}$ solves \eqref{eqn: capacity}. This implies that, for $s>1/2$, the sets of codimension 1 in $\R^N$ have positive $s$-capacity.
\end{remark}
%
%%%%%%%%%%%%%%%%%%%%%%%%%%%%%%%%%%%%%%%%
\section{$\C^{0,\alpha}$ uniform bounds}\label{section:_uniform_local}
%%%%%%%%%%%%%%%%%%%%%%%%%%%%%%%%%%%%%%%%
%
%
%
In this section we turn to the proof of the regularity results we stated in the introduction.
In particular we will prove Theorem \ref{thm:_local_holder}. We recall that, here and in the following, the functions $f_{i,\beta}$
appearing in problem $\problem{\beta}^s$ are assumed to be continuous and uniformly bounded, with respect to $\beta$, on bounded sets. We start by recalling the regularity results which hold for $\beta$ bounded. For easier notation, we write $B^+=B^+_{1}$.
\begin{lemma}\label{lem: sire_dupaigne}
There exists $\alpha^*\in(0,1)$ such that, for every $\alpha\in(0,\alpha^*)$, $\bar m>0$ and $\bar\beta>0$,
there exists a constant $C=C(\alpha,\bar m,\bar\beta)$ such that
\[
    \| \bv_\beta\|_{\C^{0,\alpha}\left(\overline{B^+_{1/2}}\right)} \leq C,
\]
for every $\bv_\beta$ solution of problem $\problem{\beta}$ on $B^+$, satisfying
\[
\beta\leq\bar\beta\quad\text{ and }\quad  \| \bv_{\beta} \|_{L^{\infty}(B^+)} \leq \bar m.
\]
\end{lemma}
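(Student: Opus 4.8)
The plan is to reduce the statement to a standard regularity result for a \emph{linear} degenerate elliptic equation with bounded Neumann data on the flat boundary, exploiting that for $\beta\le\bar\beta$ the competition term is harmless.

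First I would set
\[
h_{i,\beta}:=f_{i,\beta}(v_{i,\beta})-\beta v_{i,\beta}\tsum_{j\neq i}a_{ij}v_{j,\beta}^2,
\]
and observe that the hypotheses $\|\bv_\beta\|_{L^\infty(B^+)}\le\bar m$, $\beta\le\bar\beta$, together with the continuity and uniform boundedness of the $f_{i,\beta}$ on bounded sets, give
\[
\|h_{i,\beta}\|_{L^\infty(\partial^0B^+)}\le C(\bar m,\bar\beta)
\]
(with $C$ also depending on $N$ and the $a_{ij}$). Hence each component is a weak $H^{1;a}(B^+)$ solution of the linear problem
\[
\begin{cases}
L_a v_{i,\beta}=0 & \text{in }B^+\\
\partial_\nu^a v_{i,\beta}=h_{i,\beta} & \text{on }\partial^0B^+,
\end{cases}
\]
with $\|v_{i,\beta}\|_{L^\infty(B^+)}+\|h_{i,\beta}\|_{L^\infty(\partial^0B^+)}\le C(\bar m,\bar\beta)$, a bound uniform over the admissible $\beta$.

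Then I would invoke the regularity theory for such equations, distinguishing the two regions of $\overline{B^+_{1/2}}$. Away from $\{y=0\}$ the weight $y^a$ is smooth and bounded above and below, so classical interior elliptic estimates yield smooth (in particular $\C^{0,\alpha}$) bounds on $B^+\cap\{y\ge\delta\}$ depending only on $\delta$ and $\|v_{i,\beta}\|_{L^\infty(B^+)}$; in particular the spherical portion of $\partial B^+_{1/2}$ is unproblematic since there $y$ is bounded away from $0$. Near the flat boundary one uses the De Giorgi--Nash--Moser theory for the $A_2$-Muckenhoupt weight $|y|^a$ (see \cite{kufner} for the weight properties) together with the boundary regularity for the degenerate Neumann problem developed in the extension literature \cite{cs,css}: even reflection across $\{y=0\}$ turns the condition $\partial_\nu^a v_{i,\beta}=h_{i,\beta}$ into a distributional equation on a full ball with a bounded datum on the hyperplane, and the weighted Harnack inequality then gives an oscillation-decay estimate on half-balls $B^+_r$. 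This provides an $\alpha^*=\alpha^*(N,s)\in(0,1)$ such that, for every $\alpha<\alpha^*$,
\[
\|v_{i,\beta}\|_{\C^{0,\alpha}\left(\overline{B^+_{1/2}}\right)}\le C\big(\|v_{i,\beta}\|_{L^\infty(B^+)}+\|h_{i,\beta}\|_{L^\infty(\partial^0B^+)}\big).
\]
Combining with the bound on $h_{i,\beta}$ and summing over $i$ yields the claim with $C=C(\alpha,\bar m,\bar\beta)$.

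The point requiring care is exactly the boundary regularity up to $\partial^0B^+$: one needs a \emph{quantitative} $\C^{0,\alpha}$ estimate, uniform in the (merely bounded) data, for the degenerate Neumann problem, which is where the $A_2$-weighted Moser iteration and the cited extension results do the work. No blow-up or compactness argument is needed at this stage, since $\beta$ stays bounded; the genuinely delicate uniformity as $\beta\to+\infty$ is precisely what the remainder of this section, via the blow-up procedure against the Liouville theorems of the previous sections, is devoted to.
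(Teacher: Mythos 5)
Your proposal is sound and rests on the same reduction to linear regularity theory, but your route through the boundary estimate differs from the paper's. Like the paper, you first note that the full right-hand side $h_{i,\beta}$ of the Neumann condition is bounded in $L^\infty$ by a constant depending only on $\bar m$, $\bar\beta$, so that each $v_{i,\beta}$ is a bounded $H^{1;a}$ solution of $L_a v = 0$ in $B^+$ with bounded conormal data on $\partial^0 B^+$; the competition term contributes nothing special once $\beta$ is capped. Where you diverge is in how you then extract the $\C^{0,\alpha}$ bound: you invoke the weighted De Giorgi--Nash--Moser machinery directly (even reflection across $\{y=0\}$, $A_2$-weighted Harnack, oscillation decay on half-balls). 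The paper instead performs an additive decomposition $v_i = h_1 + h_2$: $h_1$ solves the entire-space Neumann problem with the zero-extension of the bounded datum, and its regularity is quoted from \cite[Proposition 2.9]{silve}; $h_2 = v_i - h_1$ then has vanishing conormal derivative on $\partial^0 B^+$, so its even reflection is $L_a$-harmonic in the full ball $B$ with bounded Dirichlet data, a case covered directly by \cite{fks} (cf.\ also \cite[Section 2]{css}). The advantage of the paper's split is that each piece matches a ready-made statement in the literature; your sketch would need to either re-derive a weighted Moser iteration that accommodates the inhomogeneous Neumann datum (which after reflection becomes a singular measure supported on $\{y=0\}$, not an $L^\infty$ bulk datum) or locate a reference that covers the mixed problem in one shot. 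Conceptually the two arguments are equivalent, and the two points you isolate as crucial --- uniform boundedness of the nonlinearity for bounded $\beta$, and even reflection to handle the degeneracy at $\{y=0\}$ --- are exactly the ones the paper relies on.
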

\begin{proof}
The above regularity issue can be rephrased for a general $h\in H^{1;a}(B^+)$ with
\[
\begin{cases}
L_a h = 0   & \text{in }B^+\\
h = f \in L^\infty      & \text{on }\partial^+B^+\\
\partial_\nu^a h = g \in L^\infty &\text{on }\partial^0B^+.
\end{cases}
\]
Denoting
\[
\tilde f(x,y) := f(x,|y|) \quad\text{and}\quad \tilde g(x)=
\begin{cases}
g(x) & x\in \partial^0 B^+\\
0    & x \in \R^N\setminus\partial^0 B^+,
\end{cases}
\]
we can write $h = h_1 + h_2$, where
\[
\begin{cases}
L_a h_1 = 0   & \text{in }\R^{N+1}_+\\
\partial_\nu^a h_1 = \tilde g  &\text{on }\R^N
\end{cases}
\quad\text{and}\quad
\begin{cases}
L_a h_2 = 0   & \text{in }B\\
h_2 = \tilde f - h_1       & \text{on }\partial B.
\end{cases}
\]
But then the regularity of $h_1$ (depending on $\|\tilde g\|_{L^\infty}$) follows by
\cite[Proposition 2.9]{silve}, while the one of $h_2$ is proved in \cite{fks} (see
also \cite[Section 2]{css}).
\end{proof}
From now on, without loss of generality, we will fix $\alpha^*>0$ in such a way that Lemma
\ref{lem: sire_dupaigne} holds, and furthermore
\[
\alpha^* \leq
\begin{cases}
\nuACF   &  0 < s \leq \dfrac12\\
\min(\nuACF, 2s-1) & \dfrac12 < s < 1.
\end{cases}
\]
We will obtain Theorem \ref{thm:_local_holder} for any fixed $\alpha \in (0,\alpha^*)$. Following the outline of \cite[Section 6]{tvz1}, we proceed by
contradiction and develop a blow up analysis. Let $\eta$ denote a smooth function such that
\begin{equation*}%\label{eqn: eta_blowup}
 \begin{cases}
 \eta(X) = 1   &  0\leq |X|\leq \frac12\\
 0<\eta(X) \leq 1   &  \frac12\leq |X|\leq 1\\
 \eta(X) = 0   &  |X|=1\\
 \end{cases}
\end{equation*}
(in particular, $\eta$ vanishes on $\partial^+B^+$ but is strictly positive $\partial^0B^+$).
We will show that
\[
\| \eta\bv\|_{\C^{0,\alpha}\left(\overline{B^+}\right)} \leq C,
\]
and the theorem will follow by the definition of $\eta$.
Let us assume by contradiction the existence of sequences
$\{\beta_n\}_{n \in \N}$, $\{\mathbf{v}_n\}_{n\in \N}$,
solutions to $\problem{\beta_n}^s$, such that
\[
    L_n := \max_{i = 1, \dots, k} \max_{X'\neq X'' \in \overline{B^+}} \frac{|(\eta v_{i,n})(X')-(\eta v_{i,n})(X'')|}{|X'-X''|^{\alpha}} \rightarrow \infty.
\]
By Lemma
\ref{lem: sire_dupaigne} (and the regularity of $\eta$) we infer that
$\beta_n \rightarrow \infty$. Moreover, up to a relabeling, we may assume that $L_n$ is achieved
by $i = 1$ and by two sequences of points $(X'_n, X''_n) \in \overline{B^+} \times
\overline{B^+}$. The first properties of such sequences have already been obtained in \cite{tvz1}.
\begin{lemma}[\cite{tvz1}, Lemma 6.4]\label{lem: acc non a part+}
Let $X'_n \neq X''_n$ and $r_n := |X'_n-X''_n|$ satisfy
\begin{equation*}%\label{eqn: Lnrn}
    L_n = \frac{|(\eta v_{1,n})(X'_n)-(\eta v_{1,n})(X''_n)|}{r_n^{\alpha}}.
\end{equation*}
Then, as $n\to\infty$,
\begin{enumerate}
 \item $r_n \rightarrow 0$;
 \item $\dfrac{\dist(X'_n,\partial^+ B^+)}{r_n}\to \infty$,
 $\dfrac{\dist(X''_n,\partial^+ B^+)}{r_n}\to \infty$.
\end{enumerate}
\end{lemma}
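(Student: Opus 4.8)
The plan is to prove the two assertions of Lemma~\ref{lem: acc non a part+} by a contradiction argument exploiting the normalization $L_n\to\infty$ together with the $\beta$-independent boundary regularity of Lemma~\ref{lem: sire_dupaigne}. The basic principle is that the H\"older quotient $L_n$ is realized by a pair of points where $\eta v_{1,n}$ oscillates maximally at scale $r_n$; if $r_n$ stays bounded away from $0$, or if one of the points stays at a fixed distance from the spherical part $\partial^+B^+$, then we can bound the quotient by a $\beta$-independent constant and get a contradiction.

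For part~(1): suppose, up to a subsequence, $r_n\to r_0>0$. Since $X'_n,X''_n\in\overline{B^+}$ with $|X'_n-X''_n|=r_n\geq r_0/2$ for $n$ large, the quotient satisfies
\[
L_n=\frac{|(\eta v_{1,n})(X'_n)-(\eta v_{1,n})(X''_n)|}{r_n^\alpha}\leq \frac{2\|\eta v_{1,n}\|_{L^\infty(\overline{B^+})}}{(r_0/2)^\alpha}\leq C(M,\alpha),
\]
using only $\|\bv_n\|_{L^\infty}\leq M$ and the boundedness of $\eta$. This contradicts $L_n\to\infty$, so $r_n\to0$.

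For part~(2): suppose instead that, along a subsequence, $\dist(X'_n,\partial^+B^+)/r_n$ stays bounded, say $\leq R_0$; the argument for $X''_n$ is symmetric. Then both $X'_n$ and $X''_n$ lie within distance $(R_0+1)r_n\to0$ of $\partial^+B^+$, hence in a neighborhood on which $\eta$ is as smooth and small as we like (recall $\eta$ vanishes on $\partial^+B^+$, so $|\eta(X)|\leq C\dist(X,\partial^+B^+)\leq C(R_0+1)r_n$ there). The key point is that the singular competition term, which is the only obstacle to uniform estimates, does not see $\partial^+B^+$: on a fixed outer collar $\{1/2<|X|<1\}\cap\overline{B^+}$ one reduces as in Lemma~\ref{lem: sire_dupaigne} to the linear problem with bounded Neumann data $\partial_\nu^a v_{i,n}=f_{i,\beta_n}(v_{i,n})-\beta_n v_{i,n}\sum_j a_{ij}v_{j,n}^2$ tested only against functions supported away from $\partial^0B^+_{1/2}$, or alternatively one uses that near $\partial^+B^+$ the harmonic part $h_2$ of the decomposition in Lemma~\ref{lem: sire_dupaigne} is smooth up to the boundary with $\beta$-independent estimates, while the Neumann part $h_1$ is irrelevant since $\eta$ localizes away from $\partial^0B^+$ only near the sphere. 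Writing $\eta v_{1,n}=\eta\,h_{1,n}+\eta\,h_{2,n}$ and using that $\eta$ is supported away from where the nonlinearity concentrates combined with the $\C^{0,\alpha}$-regularity of $h_{2,n}$ up to $\partial^+B^+$ (which is standard elliptic boundary regularity for the degenerate operator $L_a$, see \cite{fks}), we get
\[
\frac{|(\eta v_{1,n})(X'_n)-(\eta v_{1,n})(X''_n)|}{r_n^\alpha}\leq C,
\]
again contradicting $L_n\to\infty$.

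The main obstacle, and the point deserving care, is making precise the claim that the singular term $-\beta_n v_{1,n}\sum_j a_{1j}v_{j,n}^2$ does not interfere with the estimate near $\partial^+B^+$: the cutoff $\eta$ kills contributions near the \emph{spherical} boundary but is identically $1$ on $\partial^0B^+_{1/2}$, so one cannot simply discard the competition term. The correct reading is that near $\partial^+B^+$ one is at a \emph{fixed positive} distance from the region $B^+_{1/2}$ where $\eta=1$, so on a collar $\{3/4<|X|<1\}$ the relevant boundary data on $\partial^0 B^+$ is still present but the oscillation of $\eta v_{1,n}$ at the small scale $r_n$ is controlled by interior-type estimates for $L_a$ (away from $y=0$, where $L_a$ is uniformly elliptic) together with the boundary regularity up to $\partial^+ B^+$; crucially these do not require any bound on $\beta_n$ because the competition term only affects $\partial^0 B^+$-behavior, which is at fixed distance. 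This is exactly the content of \cite[Lemma~6.4]{tvz1}, whose proof transfers verbatim to $s\in(0,1)$ since it uses only the boundedness of $\bv_n$, the structure of $\eta$, and the $\beta$-uniform boundary regularity of Lemma~\ref{lem: sire_dupaigne}.
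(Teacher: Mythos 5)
Your argument for part~(1) is correct and is the standard one. Your argument for part~(2), however, both overcomplicates the proof and contains a genuine error. The error is in the sentence ``the competition term only affects $\partial^0 B^+$-behavior, which is at fixed distance.'' This is false: the collar $\{3/4<|X|<1\}\cap\overline{B^+}$ (or any collar around $\partial^+B^+$) does touch the flat boundary $\partial^0 B^+$ --- points like $(0.9,0,\dots,0,0)$ lie both near $\partial^+B^+$ and on $\{y=0\}$ --- so the singular nonlinearity $-\beta_n v_{1,n}\sum_j a_{1j}v_{j,n}^2$ is emphatically present there and is not bounded uniformly in $n$. You cannot invoke $\beta$-uniform boundary regularity of $h_{2,n}$, because $\beta_n\to\infty$ and the decomposition in Lemma~\ref{lem: sire_dupaigne} is only stated for $\beta\leq\bar\beta$. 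So the PDE-based route as you describe it does not close.

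The intended argument for part~(2) is much more elementary and uses no regularity beyond smoothness of $\eta$ and the $L^\infty$ bound on $\bv_n$. Suppose, along a subsequence, $\dist(X'_n,\partial^+B^+)/r_n\leq R_0$. Then, since $|X'_n-X''_n|=r_n$, both points lie within distance $(R_0+1)r_n$ of $\partial^+B^+$. Write
\[
(\eta v_{1,n})(X'_n)-(\eta v_{1,n})(X''_n)
= \eta(X'_n)\bigl(v_{1,n}(X'_n)-v_{1,n}(X''_n)\bigr) + v_{1,n}(X''_n)\bigl(\eta(X'_n)-\eta(X''_n)\bigr).
\]
Since $\eta$ is smooth with $\eta|_{\partial^+B^+}=0$, there is a Lipschitz constant $L$ with $|\eta(X'_n)|\leq L\,\dist(X'_n,\partial^+B^+)\leq LR_0 r_n$ and $|\eta(X'_n)-\eta(X''_n)|\leq L r_n$; and $\|v_{1,n}\|_{L^\infty}\leq M$. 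Hence
\[
L_n r_n^{\alpha} = \bigl|(\eta v_{1,n})(X'_n)-(\eta v_{1,n})(X''_n)\bigr| \leq LR_0 r_n\cdot 2M + M\cdot L r_n = ML(2R_0+1)\,r_n,
\]
so $L_n\leq ML(2R_0+1)\,r_n^{1-\alpha}$. Because $\alpha<1$ and $r_n\leq 2$ (or, even more strongly, $r_n\to0$ by part~(1)), this bounds $L_n$, contradicting $L_n\to\infty$. This is the argument in \cite[Lemma~6.4]{tvz1}; you should replace your PDE-based sketch with it, since the claim about the competition term being ``at fixed distance'' cannot be salvaged.
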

Our analysis is based on two different blow up sequences, one having uniformly bounded
H\"older quotient, the other satisfying a suitable problem. Let $\{{\hat{X}_n}\}_{n\in\N}\subset
\overline{B^+}$, $|{\hat{X}_n}|<1$, be a sequence of points, to be chosen later. We write
\[
    \tau_n B^+ := \frac{B^+ - {\hat{X}_n}}{r_n},
\]
remarking that $\tau_n B^+$ is a hemisphere, not necessarily centered on the hyperplane
 $\{y=0\}$. We introduce the sequences
\[
    w_{i,n}(X) := \eta({\hat{X}_n}) \frac{v_{i,n}({\hat{X}_n} + r_n X)}{L_n r_n^{\alpha}} \quad \text{and} \quad \bar{w}_{i,n}(X) := \frac{(\eta v_{i,n})({\hat{X}_n} + r_n X)}{L_n r_n^{\alpha}},
\]
where $X \in \tau_n B^{+}$. With this choice, on one hand it follows immediately that, for every $i$ and
$X'\neq X'' \in \overline{\tau_n B^{+}}$,
\begin{align*}
   \frac{|\bar{w}_{i,n}(X')-\bar{w}_{i,n}(X'')|}{
    |X'-X''|^{\alpha}} \leq & \left|\bar{w}_{1,n}\left(\frac{X'_n-{\hat{X}_n}}{r_n}\right)-
    \bar{w}_{1,n}\left(\frac{X''_n-{\hat{X}_n}}{r_n}\right)\right| = 1,
\end{align*}
in such a way that the functions $\{\bar{\bw}_n\}_{n \in \N}$ share an uniform bound on
H\"older seminorm, and at least their first components are not constant.
On the other hand, since $\eta({\hat{X}_n})>0$,
each $\bw_{n}$ solves
\begin{equation}\label{eqn: w_sol}
    \begin{cases}
    L_a^{\tau_n} w_{i,n} = 0 & \text{ in }\tau_nB^{+}\\
    \partial_{\nu}^{a,\tau_n} w_{i,n} = f_{i,n}(w_{i,n}) - M_n w_{i,n} \tsum_{j \neq i}
    a_{ij} w_{j,n}^2 & \text{ on } \tau_n \partial^0 B^+,
    \end{cases}
\end{equation}
where the new operators write ($\hat X_n = (\hat x_n, \hat y_n)$)
\[
L_a^{\tau_n}  = -\div\left(\left(\hat{y}_{{n}}r_n^{-1}+y\right)^a\nabla \right),
\qquad
\partial_{\nu}^{a,\tau_n} = \lim_{y \to (-\hat{y}_{{n}}r_n^{-1})^+} -
\left(\hat{y}_{{n}}r_n^{-1}+y\right)^a \partial_y,
\]
and $f_{i,n}(t) = \eta({\hat{X}_n}) r_n^{2s-\alpha} L_n^{-1} f_{i,\beta_n}(L_n
r_n^{\alpha} t/\eta({\hat{X}_n}))$, $M_n = \beta_n L_n^2 r_n^{2\alpha + 2s}/\eta({\hat{X}_n})^2$.
\begin{remark}\label{rem:f_i to 0}
The uniform bound of $\| \bv_{\beta} \|_{L^{\infty}}$
imply that
\[
\sup_{\tau_n \partial^0 B^+}|f_{i,n}(w_{i,n})| = \eta({\hat{X}_n}) r_n^{2s-\alpha} L_n^{-1}
\sup_{\partial^0 B^+}|f_{i,\beta_n}
\left(v_{i,n}\right)|\leq C(\bar m)r_n^{2s-\alpha} L_n^{-1}\to 0
\]
as $n\to\infty$.
\end{remark}
A crucial property is that the two blow up sequences defined above have
asymptotically equivalent behavior, as enlighten in the following lemma.
\begin{lemma}[\cite{tvz1}, Lemma 6.6]\label{lem:_uniform_convergence_w_and_bar_w}
Let $K\subset\R^{N+1}$ be compact. Then
\begin{enumerate}
  \item  $\displaystyle\max_{X \in K\cap\overline{\tau_n B^+}} | \bw_{n}(X)- \bar{\bw}_{n}(X)| \to 0$;
   \item there exists $C$, only depending on $K$, such that $|\bw_{n} (X)- \bw_{n}(0)| \leq C$, for every $x\in K$.
\end{enumerate}
\end{lemma}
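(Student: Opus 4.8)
The plan is to compare the two blow-up sequences directly from their definitions, using only that $\eta$ is Lipschitz, that $\|\bv_{\beta_n}\|_{L^{\infty}(B^+)}\le M$ uniformly in $n$, and that $r_n\to0$, $L_n\to\infty$ with $\alpha<1$. The point to keep in mind is that for $X\in\overline{\tau_nB^+}$ the point $\hat X_n+r_nX$ lies in $\overline{B^+}$, so $|v_{i,n}(\hat X_n+r_nX)|\le M$; this is the only place where the global bound is used. Set $R:=\sup_{X\in K}|X|<\infty$.

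For (1), I would subtract the two definitions to obtain, for $X\in\overline{\tau_nB^+}$,
\[
\bar w_{i,n}(X)-w_{i,n}(X)=\frac{\eta(\hat X_n+r_nX)-\eta(\hat X_n)}{L_nr_n^{\alpha}}\,v_{i,n}(\hat X_n+r_nX).
\]
Since $\eta\in\C^{\infty}$ has bounded gradient, $|\eta(\hat X_n+r_nX)-\eta(\hat X_n)|\le\|\nabla\eta\|_{L^{\infty}}\,r_n|X|$, hence for $X\in K\cap\overline{\tau_nB^+}$
\[
|\bar w_{i,n}(X)-w_{i,n}(X)|\le\frac{\|\nabla\eta\|_{L^{\infty}}\,M\,R}{L_n}\,r_n^{1-\alpha}\longrightarrow 0\qquad(n\to\infty),
\]
because $1-\alpha>0$, $r_n\to0$ and $L_n\to\infty$. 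Taking the maximum over such $X$ proves (1).

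For (2), I would first note that evaluating at $X=0$ gives $\bar w_{i,n}(0)=w_{i,n}(0)$, the correcting factor $\eta(\hat X_n)$ being the same in both definitions. Then for $X\in K\cap\overline{\tau_nB^+}$ one writes
\[
w_{i,n}(X)-w_{i,n}(0)=\bigl(w_{i,n}(X)-\bar w_{i,n}(X)\bigr)+\bigl(\bar w_{i,n}(X)-\bar w_{i,n}(0)\bigr).
\]
The first bracket is infinitesimal (uniformly in $i$) by the previous step, while the second is bounded by $|X|^{\alpha}\le R^{\alpha}$, thanks to the uniform H\"older bound $|\bar w_{i,n}(X')-\bar w_{i,n}(X'')|\le|X'-X''|^{\alpha}$ already established for $\bar{\bw}_n$. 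Summing gives $|w_{i,n}(X)-w_{i,n}(0)|\le C$ with $C$ depending only on $K$, which is (2).

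I do not expect a genuine obstacle here: the argument is essentially bookkeeping with the rescaling, and the two points requiring care are that every evaluation stays inside $\overline{B^+}$ (guaranteed by $X\in\overline{\tau_nB^+}$) and that the whole discrepancy between $\bw_n$ and $\bar{\bw}_n$ is governed by the single scalar $r_n^{1-\alpha}/L_n$, which vanishes precisely because the blow-up rate $L_n$ diverges while $r_n\to0$ and $\alpha<1$.
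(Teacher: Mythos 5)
Your proof is correct and is the standard argument one would give (and, up to notation, the same as the one in \cite{tvz1} that the paper cites). The key observations are exactly the right ones: the difference $\bar w_{i,n}-w_{i,n}$ factors as $\tfrac{\eta(\hat X_n+r_nX)-\eta(\hat X_n)}{L_nr_n^{\alpha}}\,v_{i,n}(\hat X_n+r_nX)$, so Lipschitz continuity of $\eta$, the uniform $L^{\infty}$ bound on $\bv_n$, and $r_n^{1-\alpha}/L_n\to 0$ (using $r_n\to0$ from Lemma \ref{lem: acc non a part+}, $L_n\to\infty$ from the contradiction hypothesis, and $\alpha<1$) give (1); and writing $\bw_n(X)-\bw_n(0)$ as $(\bw_n(X)-\bar\bw_n(X))+(\bar\bw_n(X)-\bar\bw_n(0))$, using that $\bw_n(0)=\bar\bw_n(0)$, part (1), and the normalized H\"older seminorm bound $[\bar\bw_n]_{\C^{0,\alpha}}\le\sqrt k$ on $\overline{\tau_nB^+}$, gives (2).
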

Now we show that the sequences $(X'_n,X''_n)$ accumulates towards $\{y=0\}$.
\begin{lemma}\label{lem: shift up_local}
There exists $C>0$ such that, for every $n$ sufficiently large,
\[
\frac{\dist(X'_n, \partial^0 B^+) + \dist(X''_n, \partial^0 B^+)}{r_n} \leq C.
\]
\end{lemma}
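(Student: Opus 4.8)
The plan is to argue by contradiction through a blow-up centered far above the flat boundary $\{y=0\}$. Suppose the ratio in the statement is unbounded along a subsequence. Since $\dist(X'_n,\partial^0B^+)$ and $\dist(X''_n,\partial^0B^+)$ differ by at most $|X'_n-X''_n|=r_n$, their ratios to $r_n$ differ by at most $1$, so both must diverge; writing $X'_n=(x'_n,y'_n)$ and combining with Lemma~\ref{lem: acc non a part+}, I obtain simultaneously $y'_n/r_n\to+\infty$ and $(1-|X'_n|)/r_n\ge\dist(X'_n,\partial^+B^+)/r_n\to+\infty$. I would then run the blow-up construction with the (still free) base point $\hat X_n:=X'_n$, which is admissible since $|X'_n|<1$. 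The merit of this choice is that the rescaled half-ball $\tau_nB^+$ exhausts all of $\R^{N+1}$: the spherical part $\tau_n\partial^+B^+$ recedes to infinity because $(1-|X'_n|)/r_n\to\infty$, while the flat part $\tau_n\partial^0B^+$ lies on $\{y=-y'_n/r_n\}$ and recedes to $y=-\infty$. Consequently, in the limit there is no boundary at all, so the Neumann condition in \eqref{eqn: w_sol}, the right-hand sides $f_{i,n}$ (already small by Remark~\ref{rem:f_i to 0}), and the competition term all disappear, and only the interior equation $L_a^{\tau_n}w_{i,n}=0$ survives.

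The next step is to extract a limiting profile. I would work with $\tilde w_{i,n}:=w_{i,n}-w_{i,n}(0)$, which still solves $L_a^{\tau_n}\tilde w_{i,n}=0$ and for which the possible unboundedness of $w_{i,n}(0)$ becomes irrelevant. By Lemma~\ref{lem:_uniform_convergence_w_and_bar_w} one compares $\tilde w_{i,n}$ with $\bar w_{i,n}-\bar w_{i,n}(0)$, whose $\C^{0,\alpha}$-seminorm is at most $1$; this yields a uniform bound $|\tilde w_{i,n}(X)|\le|X|^\alpha+o(1)$ on compact sets. Dividing the equation by the constant $(y'_n/r_n)^a$, the weight becomes $(1+y\,r_n/y'_n)^a$, which converges to $1$ in $\C^1_\loc(\R^{N+1})$; hence on each fixed ball the rescaled equations are, for $n$ large, uniformly elliptic with smooth coefficients, and a Caccioppoli inequality together with interior elliptic estimates (or the degenerate De~Giorgi--Nash--Moser theory of \cite{fks}) provides uniform $H^1_\loc$ and $\C^{0,\alpha^*}_\loc$ bounds. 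Arzel\`a--Ascoli and a diagonal argument then yield, up to a subsequence, $\tilde w_n\to\tilde w$ in $\C^0_\loc(\R^{N+1})$ and weakly in $H^1_\loc(\R^{N+1})$; passing to the limit in the weak formulation and using the uniform convergence of the weight to $1$, $\tilde w$ is harmonic on the whole of $\R^{N+1}$, with $\tilde w(0)=0$ and $|\tilde w(X)|\le|X|^\alpha$ for every $X$. Finally, $\tilde w_1$ is non-constant, since $\tilde w_{1,n}$ realizes the oscillation $\pm1+o(1)$ between $0$ and the unit vector $(X''_n-X'_n)/r_n$, so a further subsequence gives $\tilde w_1(\xi_\infty)=\pm1$ for some $|\xi_\infty|=1$.

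This contradicts the classical Liouville theorem, since an entire harmonic function on $\R^{N+1}$ with growth $|X|^\alpha$, $\alpha<\alpha^*<1$, must be constant. I expect the conceptual heart of the argument to be the observation in the first paragraph: by pushing the blow-up center a distance $\gg r_n$ away from $\{y=0\}$, the Muckenhoupt weight $y^a$ becomes asymptotically constant after rescaling, so the limiting profile lives on a boundary-free, genuinely elliptic problem on all of $\R^{N+1}$. The main technical obstacle will be the passage to the limit in the PDE: one has to justify uniform interior regularity along the sequence of (weighted, but uniformly $A_2$) equations in order to upgrade the $\C^0_\loc$ convergence to weak $H^1_\loc$ convergence, which is exactly where the Fabes--Kenig--Serapioni estimates enter. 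The geometric facts ($\tau_nB^+\to\R^{N+1}$, $y'_n/r_n\to\infty$) are routine consequences of Lemma~\ref{lem: acc non a part+}.
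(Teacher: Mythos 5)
Your proposal is correct and follows the same strategy as the paper: contradiction, blow-up centred at $\hat X_n:=X'_n$ so that $\tau_n B^+$ exhausts $\R^{N+1}$, extraction of a non-constant entire harmonic limit with growth $|X|^\alpha$, and the classical Liouville theorem. The one point where you take a genuinely different technical route is the passage to the limit in the equation. You invoke interior regularity machinery --- Caccioppoli plus De~Giorgi--Nash--Moser (or, equivalently, once the weight $\bigl(1+y\,r_n/y_n'\bigr)^a$ is seen to tend to $1$, classical elliptic estimates) --- to upgrade the $\C^0$ information to uniform $H^1_\loc\cap\C^{0,\alpha^*}_\loc$ bounds, and then pass to the limit in the weak ($H^1$-against-$H^1$) formulation. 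The paper avoids all of this by integrating by parts \emph{twice}: testing $L_a^{\tau_n}w_{i,n}=0$ against a fixed $\varphi\in\C^\infty_0(\R^{N+1})$ gives
\[
\int_{\R^{N+1}}-\div\Bigl(\bigl(1+y\,r_n\hat y_n^{-1}\bigr)^a\nabla\varphi\Bigr)\,w_{i,n}\,\de x\,\de y=0,
\]
an identity in which $w_{i,n}$ appears with no derivatives. Since the centred sequence $\bW_n=\bw_n-\bw_n(0)$ already converges uniformly on compacts (the $\C^{0,\alpha}$-seminorm bound plus Lemma~\ref{lem:_uniform_convergence_w_and_bar_w} gives equicontinuity for free), and the coefficient converges to $1$ in $\C^\infty$ on $\supp\varphi$, one passes to the limit directly and concludes that $\bW$ is distributionally, hence classically, harmonic. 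Your route is perfectly sound, but it re-derives equicontinuity from the equation when it is already available by construction; the paper's double integration by parts buys you a limit argument that needs only $\C^0_\loc$ convergence and no $H^1$ compactness at all.
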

\begin{proof}
We argue by contradiction. Taking into account the second part of Lemma \ref{lem: acc non a part+},
this forces
\[
    \frac{\dist(X'_n, \partial B^+) + \dist(X''_n, \partial B^+)}{r_n} \to \infty.
\]
In the definition of $\bw_n$, $\bar\bw_n$ we choose $\hat X_n = X'_n$, so that $\tau_n B^+ \to \R^{N+1}$ and
$\hat y^{-1}_n r_n\to 0$. Let $K$ be any fixed compact set. Then, by definition, $K$ is contained in the half sphere $\tau_n B^+$, for every $n$ sufficiently large. By defining $\bW_{n} = \bw_{n} - \bw_{n}(0)$,  $\bar\bW_{n} = \bar\bw_{n} - \bar\bw_{n}(0)$, we obtain that $\{\bar\bW_n\}_{n\in\N}$ is a sequence of functions which share the same $\C^{0,\alpha}$-seminorm and are uniformly bounded in $K$, since $\bar\bW_n(0) = 0$. By the Ascoli-Arzel\`a Theorem, there exists a function $\bW \in C(K)$ which, up to a subsequence, is the uniform limit of $\{\bar\bW_n\}_{n\in\N}$: taking a countable compact exhaustion of $\R^{N+1}$ we find that $\bar\bW_n \to \bW$ uniformly in every compact set. Moreover, for any pair $X$, $Y$, we have that $X, Y \in \tau_n B^+$ for every $n$ sufficiently large, and so
\[
    |\bar \bW_{n}(X) - \bar\bW_{n}(Y)| \leq \sqrt{k} |X-Y|^\alpha.
\]
Passing to the limit in $n$ the previous expression, we obtain $\bW \in \C^{0,\alpha}(\R^{N+1})$. By Lemma \ref{lem:_uniform_convergence_w_and_bar_w}, we also find that $\bW_n \to \bW$ uniformly on compact sets.
We want to show that $\bW$ is harmonic. To this purpose, let $\varphi \in \C_0^{\infty}(\R^{N+1})$ be a smooth test function, and let $\bar n$ be sufficiently large so that $\supp \varphi \subset \tau_n B^+$ for all $n \geq \bar n$. For a fixed $i \in \{1,\dots,k\}$, we test the equation $L_a^{\tau_n} w_{i,n} = 0$ by $\varphi$ to find
\[
    \int_{\R^{N+1}} -\div\left(\left(1+yr_n\hat{y}_n^{-1}\right)^a\nabla \varphi \right) w_{i,n} \, \de x \de y = 0.
\]
Passing to the uniform limit and observing that $(1+yr_n\hat{y}_n^{-1})^a\to 1$ in $\C^{\infty}(\supp\varphi)$, we obtain at once that $\bW$ is indeed harmonic. We will obtain a contradiction with the classical Liouville Theorem once we show that $\bW$ is not constant. To this aim we observe that $(X'_n-\hat{X}_n)/r_n = 0$ and, up to a subsequence,
\[
    \frac{X''_n-{\hat{X}_n}}{r_n} = \frac{X''_n-X'_n}{|X''_n-X'_n|}\to X''\in \partial{B_{1}}.
\]
Therefore, by equicontinuity and uniform convergence,
\[
    \left|\bar W_{1,n}\left(\frac{X'_n-{\hat{X}_n}}{r_n}\right) - \bar W_{1,n}\left(\frac{X''_n-{\hat{X}_n}}{r_n}\right)\right| = 1 \implies |W_{1}(0) - W_{1}(X'')| = 1. \qedhere
\]
\end{proof}
After the result above, we are in a position to choose ${\hat{X}_n}$ in the definition of $\bw_n$, $\bar\bw_n$
as
\[
{\hat{X}_n}:=(x'_n,0),
\]
where as usual $X'_n=(x'_n,y'_n)$. With this choice, it is immediate to see that
\[
 L_a^{\tau_n} =  L_a,\quad
 \partial_{\nu}^{a,\tau_n} = \partial_{\nu}^{a},\quad
 \tau_nB^+\to\Omega_\infty = \R^{N+1}_+.
\]
Moreover, by Lemma \ref{lem: shift up_local}, we have that $X'_n,X''_n\in B^+_C$, for some $C$ not depending on $n$.
This will imply that any possible blow up limit can not be constant. Now one can reason as in \cite[Section
6]{tvz1} in order to prove that the blow up sequences converge. In doing this, a first crucial step
consists in proving that $\bw_n(0)$ is bounded: to this aim, it is useful to notice that the decay rate
for subsolutions which we obtained in Lemma \ref{lem: decay with perturbations} does not depend on $s$ and completely agrees with the one found in \cite[Lemma 4.5]{tvz1}. Consequently, the uniform bound on the H\"older seminorm
allows to prove the
following result.
\begin{lemma}[\cite{tvz1}, Lemma 6.13]\label{lem: uniform implies strong convergence local}
Under the previous blow up setting, there exists $\bw \in
(H^{1;a}_{\loc}\cap \C^{0,\alpha})\left(\overline{\R^{N+1}_+}\right)$ such that, up to a subsequence,
\[
	\bw_{n} \to \bw \text{ in }(H^1\cap C)(K)
    %,\qquad\bar\bw_{n} \to \bw \text{ in } \C^{0,\alpha'}(K),
\]
for every compact $K\subset\overline{\R^{N+1}_+}$%, $\alpha' \in (0,\alpha)$
.
\end{lemma}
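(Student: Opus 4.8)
The plan is to follow the proof of \cite[Lemma 6.13]{tvz1}; the only genuinely $s$-dependent ingredients are the Liouville-type statement of Corollary \ref{cor: liouville L_a} and the decay estimate of Lemma \ref{lem: decay with perturbations}, which, as already remarked, holds with a constant independent of $s$. The first move is a reduction: by Lemma \ref{lem:_uniform_convergence_w_and_bar_w} the oscillations $|\bw_n(X)-\bw_n(0)|$ are bounded on every compact set uniformly in $n$, while each $\bar\bw_n$ has $\C^{0,\alpha}$-seminorm bounded uniformly in $n$; hence it suffices to prove that $\{\bw_n(0)\}_n$ is bounded. Granting this, $\{\bw_n\}_n$ is locally equibounded and equicontinuous, so the Ascoli--Arzel\`a theorem and a diagonal extraction along a compact exhaustion of $\overline{\R^{N+1}_+}$ produce $\bw\in\C^{0,\alpha}(\overline{\R^{N+1}_+})$ with $\bw_n\to\bw$ locally uniformly, and $\bar\bw_n\to\bw$ as well by Lemma \ref{lem:_uniform_convergence_w_and_bar_w}.

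The heart of the matter is the boundedness of $\{\bw_n(0)\}_n$, which I would argue by contradiction. Suppose $\max_i w_{i,n}(0)\to\infty$, realized by an index $i_0$. By the oscillation bound, for every $R$ and $n$ large one has $w_{i_0,n}\ge\tfrac12 w_{i_0,n}(0)$ on $B_R^+$; hence, for $j\ne i_0$, the nonnegative function $w_{j,n}$ is an $L_a$-subsolution satisfying $\partial_\nu^a w_{j,n}\le f_{j,n}(w_{j,n})-a_{i_0j}M_n(\tfrac12 w_{i_0,n}(0))^2 w_{j,n}$ on $\partial^0 B_R^+$, with $\|f_{j,n}(w_{j,n})\|_{L^\infty}\to0$ by Remark \ref{rem:f_i to 0}. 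Rescaling $B_R^+$ to the unit hemisphere and applying Lemma \ref{lem: decay with perturbations} with absorption mass $\sim M_n w_{i_0,n}(0)^2$ controls $\sup_{B_{R/2}^+} w_{j,n}$ by $\sup_{\partial^+ B_R^+} w_{j,n}$ divided by that mass; chasing this inequality together with its analogue obtained by interchanging the roles of the indices, and with the $L^\infty$ bound $\|\bv_n\|_{\infty}\le M$, one checks (this bookkeeping is the technical core, carried out as in \cite[Lemma 6.13]{tvz1}) that in every regime the limiting picture is the same: setting $\bW_n:=\bw_n-\bw_n(0)$, the competition contributions to $\partial_\nu^a w_{i,n}$ either tend to zero uniformly on compact subsets of $\{y=0\}$, or are of absorption type and force the corresponding components of $\bW_n$ to decay to zero. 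In all cases $\bW_n$ converges, locally uniformly and (via the Caccioppoli inequality obtained testing $L_a W_{i,n}=0$ with $W_{i,n}\varphi^2$) in $H^{1;a}_{\loc}$, to $\bW$ each of whose components is $L_a$-harmonic on $\R^{N+1}_+$, has vanishing conormal derivative on $\R^N$, and is globally $\C^{0,\alpha}$ with $\alpha<\min(2s,1)$. By Corollary \ref{cor: liouville L_a} each component of $\bW$ is constant, hence $\bW\equiv\mathbf 0$ since $\bW(0)=\mathbf 0$. But along a subsequence the bounded points $Y'_n=(X'_n-\hat X_n)/r_n$, $Y''_n=(X''_n-\hat X_n)/r_n$ (bounded by Lemmas \ref{lem: acc non a part+} and \ref{lem: shift up_local}) converge to $Y',Y''$ with $|Y'-Y''|=1$, so passing to the limit in $|\bar W_{1,n}(Y'_n)-\bar W_{1,n}(Y''_n)|=1$ yields $|W_1(Y')-W_1(Y'')|=1$, contradicting $\bW\equiv\mathbf 0$.

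Once $\{\bw_n(0)\}_n$ is bounded, the remaining assertions are routine. The local $L^\infty$ bound and the Caccioppoli inequality (now applied directly to $w_{i,n}$, using $\partial_\nu^a w_{i,n}\,w_{i,n}\le|f_{i,n}(w_{i,n})|\,w_{i,n}\le C$ on $\partial^0 B^+$) give a uniform bound in $H^{1;a}_{\loc}(\overline{\R^{N+1}_+})$, hence $\bw_n\rightharpoonup\bw$ weakly there; the same test function produces the segregation estimate $M_n\int_{\partial^0}a_{ij}w_{i,n}^2 w_{j,n}^2\varphi^2\,\de x\le C$, so that $w_iw_j\equiv0$ on $\{y=0\}$ in the limit. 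Strong convergence then follows from the standard expansion of $\int_{B^+} y^a|\nabla(w_{i,n}-w_i)|^2\varphi^2$, using weak convergence on the mixed term, the Caccioppoli identity on $\int y^a|\nabla w_{i,n}|^2\varphi^2$, Remark \ref{rem:f_i to 0}, and the vanishing in the limit of the boundary competition integrals (the refinement $M_n\int_{\partial^0}w_{i,n}^2 w_{j,n}^2\varphi^2\to0$ being deduced as in \cite{tvz1} from the local uniform convergence and the segregation just obtained). This gives $\bw_n\to\bw$ in $(H^{1;a}\cap\C)(K)$ for every compact $K\subset\overline{\R^{N+1}_+}$.

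The step I expect to be the main obstacle is the boundedness of $\{\bw_n(0)\}_n$: the difficulty is not the clean regime in which the competition at the blow-up scale is strong enough to annihilate all but one component, but the borderline ones (roughly, $M_n w_{i_0,n}(0)^2$ neither large nor negligible, or several components simultaneously unbounded at the origin), where one must iterate the decay estimate of Lemma \ref{lem: decay with perturbations} with care — precisely the content of \cite[Lemma 6.13]{tvz1}. Everything else reduces to the Liouville theorem of Corollary \ref{cor: liouville L_a} or to energy estimates standard in this circle of ideas.
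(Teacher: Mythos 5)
Your proposal correctly identifies the same key steps that the paper (which does not carry out the argument in detail, but defers to \cite[Lemma 6.13]{tvz1}) singles out: first reduce to showing that $\{\bw_n(0)\}_n$ is bounded, establish this via the decay estimate of Lemma \ref{lem: decay with perturbations} combined with the Liouville result (Corollary \ref{cor: liouville L_a}) and the non-degeneracy coming from the blow-up points $(X'_n-\hat X_n)/r_n$, $(X''_n-\hat X_n)/r_n$ lying in a fixed compact set, and then obtain uniform convergence from equicontinuity and $H^{1;a}_{\loc}$ convergence from a Caccioppoli-type estimate. The paper explicitly remarks that the crucial $s$-dependent ingredient is the $s$-independent decay rate in Lemma \ref{lem: decay with perturbations} (matching \cite[Lemma 4.5]{tvz1}), and your proposal emphasizes precisely this point, so the two proofs follow the same route.
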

\begin{proof}[End of the proof of Theorem \ref{thm:_local_holder}]
Up to now, we have that $\bw_{n} \to \bw$ in $(H^{1;a}\cap C)_{\loc}$,
and that the limiting blow up profile $\bw$ is a nonconstant vector
of harmonic, globally H\"older continuous functions. To reach the final
contradiction, we distinguish, up to subsequences, between
the following three cases.

\textbf{Case 1:} $M_n\to0$. In this case also the equation on the
boundary passes to the limit, and the nonconstant component
$w_1$ satisfies $\partial_{\nu}^a w_{1} \equiv 0$ on $\R^{N}$, in
contradiction with Corollary \ref{cor: liouville L_a}.

\textbf{Case 2:} $M_n\to C>0$. Even in this case the equation on the boundary passes to the limit, and
$\bw$ solves
\[
    \begin{cases}
    L_a w_{i} = 0 & x \in \R^{N+1}_+\\
    \partial_{\nu}^a w_{i} = - C w_i \tsum_{j \neq i} a_{ij} w_j^2 & \text{on } \R^{N}
\times \{0\}
    \end{cases}
\]
The contradiction is now reached using Proposition \ref{prp: liouville
system}.

\textbf{Case 3:} $M_n\to \infty$. In this case we can find a contradiction with
Proposition \ref{prp: liouville inequalities}. To this aim, one has to prove the validity of a
Pohozaev-type identity for the limits of the blow-up sequences. This can be done by taking into account
Lemma \ref{lem: uniform implies strong convergence local} and reasoning as in \cite[Section 5]{tvz1}.

As of now, the contradictions we have obtained imply that $\{\bv_\beta\}_{\beta>0}$ is
uniformly bounded in $\C^{0,\alpha}\left(\overline{B^+_{1/2}}\right)$, for every $\alpha<\alpha^*$. But
then the relative compactness in $\C^{0,\alpha}\left(\overline{B^+_{1/2}}\right)$ follows by
Ascoli-Arzel\`a Theorem, while the one in $H^{1;a}({B^+_{1/2}})$ can be shown by reasoning as
in the proof of Lemma \ref{lem: uniform implies strong convergence local}.
\end{proof}

Incidentally, we remark that similar arguments can be exploited in order to prove the following compactness
result, concerning segregated profiles (see also \cite[Proposition 6.15]{tvz1}).
This result, though technical at this stage,
provides a compactness criterion for suitable blow down sequences, and may be useful in proving optimal
regularity results, along the scheme explained in the introduction.
\begin{proposition}
Let $\{\bv_n\}_{n \in \N}$ be a subset of $\C^{0,\alpha}\left(\overline{B^+_1}\right)$,
for some $0<\alpha\leq\alpha^*$, and satisfy the assumptions of Proposition \ref{thm:_Almgren_for_classG_s}.
If
\[
    \| \bv_{n} \|_{L^{\infty}(B^+_1)} \leq \bar m,
\]
with $\bar m$ independent of $n$, then for every $\alpha' \in (0,\alpha)$ there exists a constant
$C = C(\bar m,\alpha')$, not depending on $n$, such that
\[
    \| \bv_n\|_{\C^{0,\alpha'}\left(\overline{B^+_{1/2}}\right)} \leq C.
\]
Furthermore, $\{\bv_n\}_{n\in\N}$ is relatively compact in $H^{1;a}(B^+_{1/2}) \cap \C^{0,\alpha'}\left(\overline{B^+_{1/2}}\right)$ for every $\alpha' < \alpha$.
\end{proposition}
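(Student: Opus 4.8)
The plan is to mimic the blow-up scheme just developed for Theorem \ref{thm:_local_holder}, but now starting from a sequence of segregated profiles rather than from solutions of $\problem{\beta}^s$. So I would argue by contradiction: suppose that for some $\alpha' \in (0,\alpha)$ there is a sequence $\{\bv_n\}$ as in the statement with $\| \bv_n \|_{L^\infty(B^+_1)} \le \bar m$, but
\[
    L_n := \max_i \max_{X' \neq X'' \in \overline{B^+}} \frac{|(\eta v_{i,n})(X') - (\eta v_{i,n})(X'')|}{|X'-X''|^{\alpha'}} \to \infty,
\]
where $\eta$ is the same cutoff used in the proof of Theorem \ref{thm:_local_holder}. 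As there, one relabels so that the maximum is attained by $i=1$ at points $X'_n, X''_n$ with $r_n := |X'_n - X''_n| \to 0$, and introduces the two blow-up sequences
\[
    w_{i,n}(X) = \eta(\hat X_n) \frac{v_{i,n}(\hat X_n + r_n X)}{L_n r_n^{\alpha'}}, \qquad \bar w_{i,n}(X) = \frac{(\eta v_{i,n})(\hat X_n + r_n X)}{L_n r_n^{\alpha'}}.
\]
Lemmas \ref{lem: acc non a part+}--\ref{lem: uniform implies strong convergence local} are essentially blow-up facts that do not use the precise form of the interior equation beyond the structural conditions in Proposition \ref{thm:_Almgren_for_classG_s}; I would first check that each of them goes through verbatim in this setting. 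The key simplification compared to Theorem \ref{thm:_local_holder} is that here there is no parameter $\beta$ and no term $f_{i,\beta}$: since each $v_{i,n}$ already satisfies $v_i \partial_\nu^a v_i = 0$ on $\{y=0\}$ and $L_a v_i = 0$ in the half-space, so does each $w_{i,n}$, and these relations are scale-invariant.

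The core of the argument is then to identify the blow-up limit $\bw$ and derive a contradiction. After choosing $\hat X_n = (x'_n,0)$ as in the proof of Lemma \ref{lem: shift up_local} and its sequel, one gets $\tau_n B^+ \to \R^{N+1}_+$ with $L_a^{\tau_n} = L_a$, $\partial_\nu^{a,\tau_n} = \partial_\nu^a$. By Lemma \ref{lem: uniform implies strong convergence local} (applied here — its proof uses only the structural properties, the $L^\infty$ bound, and the decay Lemma \ref{lem: decay with perturbations}), up to a subsequence $\bw_n \to \bw$ in $(H^{1;a} \cap C)_{\loc}(\overline{\R^{N+1}_+})$, with $\bw \in (H^{1;a}_{\loc} \cap \C^{0,\alpha'})(\overline{\R^{N+1}_+})$ nonconstant (nonconstancy of the first component follows exactly as before, since $X'_n, X''_n$ stay in a bounded ball). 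Passing to the limit in the equations and in the conditions (1)--(3) of Proposition \ref{thm:_Almgren_for_classG_s}, one checks that $\bw$ still satisfies those hypotheses: conditions (1) and (2) pass to the limit by the strong $H^{1;a}_{\loc}$ and uniform convergence (the segregation condition $w_i w_j|_{y=0}=0$ survives uniform convergence, and $L_a w_i = 0$, $w_i \partial_\nu^a w_i = 0$ pass to the limit by the weak formulation), while the Pohozaev identity (3) for $\bw$ is obtained from the strong convergence exactly as referenced in \cite[Section 5]{tvz1} for Case 3 of the proof of Theorem \ref{thm:_local_holder}. Since $\alpha' < \alpha \le \alpha^*$ and $\alpha^*$ was fixed below the thresholds of Proposition \ref{prp: liouville inequalities}, the nonconstant globally $\C^{0,\alpha'}$ profile $\bw$ contradicts part (2) of that proposition. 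This contradiction establishes the uniform $\C^{0,\alpha'}(\overline{B^+_{1/2}})$ bound.

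The relative compactness statements follow once the uniform bound is in hand: compactness in $\C^{0,\alpha'}(\overline{B^+_{1/2}})$ for every $\alpha' < \alpha$ is Ascoli--Arzel\`a applied with the uniform $\C^{0,\alpha}$-type bound (more precisely, one uses the bound at exponent $\alpha''$ with $\alpha' < \alpha'' < \alpha$ to extract a convergent subsequence in $\C^{0,\alpha'}$), and compactness in $H^{1;a}(B^+_{1/2})$ is obtained by the same Caccioppoli-type argument used in the proof of Lemma \ref{lem: uniform implies strong convergence local}, testing the equations $L_a v_{i,n} = 0$ against $\zeta^2(v_{i,n} - v_i)$ for a suitable cutoff $\zeta$ and using the already-established uniform convergence on compact subsets of $\overline{\R^{N+1}_+}$ together with the segregation condition to kill the boundary terms.

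I expect the main obstacle to be the careful verification that the limiting profile $\bw$ genuinely satisfies \emph{all} of the hypotheses of Proposition \ref{thm:_Almgren_for_classG_s} — in particular the Pohozaev identity (3), which is not automatically inherited under mere weak convergence and requires the strong $H^{1;a}_{\loc}$ convergence provided by Lemma \ref{lem: uniform implies strong convergence local}, together with a careful handling of the weighted boundary integrals on $\partial^+ B^+_r$ for a.e. $r$. Once that is secured, the contradiction with Proposition \ref{prp: liouville inequalities}(2) is immediate, and the rest is routine functional analysis.
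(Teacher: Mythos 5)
Your proposal follows essentially the same blow-up scheme the paper sketches for this proposition (which the authors only outline, deferring to \cite[Proposition 6.15]{tvz1}), and your identification of the Pohozaev identity (assumption (3) of Proposition~\ref{thm:_Almgren_for_classG_s}) as the delicate hypothesis to pass to the limit is exactly the point the paper flags. One small caveat worth noting: you invoke Lemma~\ref{lem: decay with perturbations} as the ingredient that makes Lemma~\ref{lem: uniform implies strong convergence local} go through, but that decay estimate is tailored to a boundary absorption term $-Mv$, which is absent for already-segregated profiles; in this setting the boundedness of $\bw_n(0)$ is obtained by a different (simpler) mechanism, roughly that if $\bw_{1,n}(0)\to\infty$ then the segregation condition forces the remaining components to have vanishing trace on larger and larger balls, after which Proposition~\ref{prp: pre_liouville L_a} and Corollary~\ref{cor: liouville L_a} give the contradiction. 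Apart from this substitution, your argument coincides with the paper's intended one.
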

To conclude, we mention that the above local result can be used, together with a covering argument and
Proposition \ref{prp: liouville_boundary}, to prove Theorem \ref{thm: glob_bdd_intro} (see also
\cite[Theorem 8.5]{tvz1}): there are, however, two different situations to be handled.

First, if one considers the problem \eqref{eqn: general_system} set on the whole $\R^N$ (Theorem \ref{thm: glob_bdd_intro} in the case $\Omega = \R^N$), then the global uniform bounds on $\bu_{\beta}$ imply, by the representation formula of Caffarelli and Silvestre \cite{cs}, that also $\bv_{\beta}$ enjoy the same uniform $L^{\infty}$ bounds. As a consequence, the local uniform bounds extend at once to the global case by a simple covering argument.

In the case of $\Omega \neq \R^N$, one has to deal also with the boundary of $\Omega$. In this situation, the regularity for $\bu_{\beta}$ is ensured by \cite{serra}, while the uniform H\"older bounds - obtained again via the blow up analysis - follows with similar arguments and the use of the appropriate Liouville type results (Proposition \ref{prp: liouville_boundary}). Further details can be found in \cite[Section 8]{tvz1}.

%\bibliography{alex_bibliography}
%\bibliographystyle{abbrv}

\end{document}